\documentclass[12pt]{amsart}

\usepackage{enumitem}
\usepackage{amssymb}
\usepackage{amsmath}
\usepackage{amsthm}
\usepackage{hyperref}
\usepackage{cleveref}

\usepackage{soul}

\usepackage{cases}

\usepackage{tikz}
\usetikzlibrary{decorations.markings, decorations.pathreplacing, positioning,arrows, matrix, decorations.pathmorphing, shapes.geometric, calc}
\usetikzlibrary{backgrounds, decorations}
\usepackage{tikz-cd}

\newtheorem*{notation}{Notation}

\newcounter{alancomments}

\newcounter{olicomments}
\newcommand{\oli}[1]{\textbf{\color{red}(O\arabic{olicomments})} \marginpar{\scriptsize\raggedright\textbf{\color{red}(O\arabic{olicomments})Oli:} #1}
\addtocounter{olicomments}{1}}

\newtheorem{theorem}{Theorem}[section]
\newtheorem{lemma}[theorem]{Lemma}
\newtheorem{corollary}[theorem]{Corollary}
\newtheorem{question}[theorem]{Question}

\newtheorem{proposition}[theorem]{Proposition}
\newtheorem{claim}[theorem]{Claim}
\newtheorem{thmletter}{Theorem}

\newtheorem{quesletter}[thmletter]{Question}
\theoremstyle{definition}

\newtheorem{definition}[theorem]{Definition}

\newcommand{\rk}{\operatorname{rank}}
\newcommand{\sgn}{\operatorname{sgn}}
\newcommand{\im}{\operatorname{im}}
\newcommand{\aut}{\operatorname{Aut}}
\newcommand{\inn}{\operatorname{Inn}}
\newcommand{\fix}{\operatorname{Fix}}
\newcommand{\stab}{\operatorname{Stab}}
\newcommand{\ab}{\operatorname{Ab}}

\newcommand{\BS}{\operatorname{BS}}

\newcommand{\bs}{\operatorname{BS}}

\newcommand{\p}[1]{\noindent {\newline\bf #1.}}

\title{Fixed subgroups of generalised Baumslag-Solitar groups}
\author{Oli Jones and Alan Logan}

\begin{document}

\maketitle

\begin{abstract}
    We investigate fixed subgroups of automorphisms of generalised Baumslag-Solitar (GBS) groups. 
    Our main results are for automorphisms leaving a Bass-Serre tree invariant, under the assumption that all edge stabilisers are strictly contained in the corresponding vertex stabilisers. We completely characterise which GBS groups admit such an automorphism with a fixed subgroup which is not finitely-generated. In doing so, we provide an infinite family of examples of non-finitely generated fixed subgroups in GBS groups. 
    
    Dropping the above assumptions, we show that all finite order automorphisms of GBS groups have finitely generated fixed subgroups. Furthermore, we show that when the GBS graph is a tree, all automorphisms have finitely generated fixed subgroups.
\end{abstract}

\section{Introduction}

Let $\phi$ be an automorphism of a group $G$.
Then the set of elements of $G$ fixed by $\phi$ form a subgroup, $\fix(\phi)$, called the \emph{fixed subgroup of $\phi$}, that is,
\[
\fix(\phi)=\{g\in G\mid\phi(g)=g\}
\]
Such subgroups have been studied for many groups, with the main topic of study being their rank, $\rk(\fix(\phi))$, which is the minimal cardinality of a generating set of the subgroup.
The main questions are, when are fixed subgroups of \emph{finite} rank? When are they of \emph{bounded} rank?

The Scott Conjecture, first studied in the 1970s, dealt with these questions for finitely generated free groups.
Gersten showed that fixed subgroups of free group automorphisms are always of finite rank \cite{Gersten1987ScottConj}.
Bestvina and Handel further proved that the rank is bounded, specifically $\rk(\fix(\phi))\leq \rk(F)$ for all $\phi\in\aut(F)$ \cite{TrainTracks}.
See \cite{Survey} for a survey on fixed subgroups of free groups.
Similar results have been obtained for, among other classes,
surface groups \cite{Surface},
hyperbolic \cite{Hyperbolic} and 
relatively hyperbolic groups \cite{RelHyperbolic},
$3$-manifold groups \cite{ThreeManifold}, and certain Artin groups \cite{jones2024fixedsubgroupsartingroups}. 
There are also results for some free products \cite{FreeProducts}, finite order automorphisms of certain direct products \cite{FreeAbelianTimesFree}, and some graphs of groups \cite{sykiotis2002GraphOfGroups}.
On the other hand, there are examples of biautomatic and $\operatorname{CAT}(0)$ which have infinitely many isomorphism classes of fixed subgroups \cite{BiautomaticCATZero}. There are automorphisms of RAAGs with non-finitely generated fixed subgroups, but this does not occur for ``untwisted'' automorphisms \cite{fioravanti2024coarse}.

The study of fixed subgroups has close links to Reidemeister fixed point theory \cite{NielsenFPT}, and has been applied to resolve the conjugacy problem for certain classes of extensions of groups \cite{ConjProb1, ConjProb2, ConjProb3}.

This paper studies the ranks of fixed subgroups of automorphisms of generalised Baumslag-Solitar groups. In contrast to settings with negative curvature, we find many examples of automorphisms of GBS groups with infinite rank fixed subgroups (in particular, in Lemma \ref{lem:Betti2InfiniteGen} we give an infinite family of examples). As such, our focus is on characterising which GBS groups have the good behaviour of finitely or boundedly generated fixed subgroups.

To state our results in full generality we will need more terminology. We begin by highlighting a corollary for the non-solvable classical Baumslag-Solitar groups.

\begin{thmletter}[Corollary \ref{cor:baumslagsolitar}]\label{thmintro:baumslagsolitar}
    For fixed $p, q \in \mathbb{Z}$ with $|q| \geq |p|$ and $|p| \neq 1$, consider the group $\bs(p,q) = \langle x, t \mid x^p = tx^qt^{-1} \rangle$.

    \begin{enumerate}
        \item If $p = -q$ then, for all $\phi \in \aut(\bs(p,q))$, $\rk(\fix(\phi)) \leq 3$.
        \item If $p \nmid q$ then, for all $\phi \in \aut(\bs(p,q))$, $\rk(\fix(\phi))$ is finite, but there is no bound on the rank.
        \item Otherwise, there exists $\phi \in \aut(\bs(p,q))$ such that $\rk(\fix(\phi))$ is infinite.
    \end{enumerate}
\end{thmletter}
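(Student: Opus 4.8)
The plan is to prove the three cases about $\bs(p,q)$ by specialising the paper's general machinery for GBS groups to the classical case. A classical Baumslag–Solitar group $\bs(p,q)$ is a GBS group whose underlying graph is a single loop (one vertex, one edge) with edge-inclusion maps of degrees $p$ and $q$. The regime $|q|\geq|p|$ and $|p|\neq 1$ ensures we are in the non-solvable, non-unimodular-or-boundary cases where the Bass–Serre tree is genuinely a tree and edge stabilisers are proper in vertex stabilisers, so the hypotheses of the main results in the body (which the abstract says require edge stabilisers strictly contained in vertex stabilisers) should apply.

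\medskip

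\noindent\textbf{Case (1): $p=-q$.} Here $\bs(p,-p)=\langle x,t\mid x^p=tx^{-p}t^{-1}\rangle$. The key structural feature is that this group is \emph{unimodular}: the modular homomorphism is trivial (since $|p|=|q|$), which forces it to contain a finite-index subgroup splitting as $F\times\mathbb{Z}$ (up to the sign twist, it is virtually $F_n\times\mathbb{Z}$, reflecting the $\mathbb{Z}$ coming from a central or almost-central element). I would exploit this product structure together with the known bound for $\rk(\fix(\phi))$ on free-by-cyclic or $F\times\mathbb{Z}$ type groups — either citing the general theorem for GBS trees/finite-order behaviour from the body, or the cited result \cite{FreeAbelianTimesFree} for finite-order automorphisms of direct products — to get a uniform bound. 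The explicit constant $3$ presumably comes from counting: the rank of the fixed subgroup is controlled by the rank of a rank-$2$ abelianisation plus one, or by the Betti-number computation appearing in Lemma \ref{lem:Betti2InfiniteGen}. The main task is to reduce an arbitrary $\phi\in\aut(\bs(p,-p))$ to one respecting the Bass–Serre tree (passing to a power or using that the relevant outer automorphism group is virtually abelian), then apply the uniform bound.

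\medskip

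\noindent\textbf{Case (2): $p\nmid q$.} The divisibility condition $p\nmid q$ is exactly what makes the modular homomorphism nontrivial while preventing the degenerate ``ascending'' situation, so the general finiteness theorem from the body (the one asserting all automorphisms of tree-shaped GBS graphs, or all tree-invariant automorphisms under the strict-containment hypothesis, have finitely generated fixed subgroups) should yield finite rank. For the \emph{unboundedness} I would construct an explicit family: take automorphisms $\phi_n$ (for instance conjugation-type or Dehn-twist-type automorphisms built from the $\mathbb{Z}$-action, or the automorphisms sending $t\mapsto x^n t$) and compute that the fixed subgroup grows with $n$. The cleanest approach is to read off from the paper's characterisation of $\fix(\phi)$ — likely as a graph of groups whose Betti number or number of vertex orbits increases — that $\rk$ is unbounded over the family, mirroring the construction in Lemma \ref{lem:Betti2InfiniteGen} but in the finitely-generated regime.

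\medskip

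\noindent\textbf{Case (3): the remaining case $p\mid q$ (with $p\neq -q$, $|p|\neq 1$).} Here I would produce a single automorphism with infinite-rank fixed subgroup by applying Lemma \ref{lem:Betti2InfiniteGen} directly: when $p\mid q$ the loop admits a tree-invariant automorphism (arising because the edge maps line up divisibility-wise, allowing a ``partial ascending'' or twist automorphism) whose fixed subgroup is an infinitely generated graph of groups. The main obstacle across all three cases — and the genuinely hard step — is verifying that an \emph{arbitrary} automorphism $\phi$ can be analysed via the tree-invariant machinery: $\aut(\bs(p,q))$ is not contained in the tree-preserving automorphisms in general, so I expect to need the fact that (for these non-solvable, non-unimodular BS groups) the JSJ/Bass–Serre tree is canonical up to a controlled ambiguity, so every automorphism permutes the trees in the deformation space and a bounded power stabilises one. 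Establishing this canonicity and checking the strict edge-stabiliser containment hypothesis for the single-loop graph is where the real care lies; once that reduction is in place, cases (1)–(3) follow by specialising the general theorems and the explicit Lemma \ref{lem:Betti2InfiniteGen}.
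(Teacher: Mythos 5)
Your overall strategy --- specialise the paper's general theorems for $1$-free GBS systems to the single-loop graph --- is the right one, and your sorting of the three cases into the regimes of Theorems \ref{thm:characterisingFg} and \ref{thm:bounded} (modulus $\{1,-1\}$ when $p=-q$; modulus not generated by an integer when $p\nmid q$; integral modulus otherwise) matches the paper. However, the step you correctly flag as ``the genuinely hard step'' --- handling \emph{arbitrary} automorphisms rather than just $\aut^T(G)$ --- is resolved incorrectly. You propose that a bounded power of $\phi$ stabilises a tree in the deformation space and that one can then work with that power. This does not suffice: $\fix(\phi)$ is merely a subgroup of $\fix(\phi^k)$, and neither finite generation nor a rank bound passes from a group to its subgroups, so nothing about $\fix(\phi)$ follows from analysing $\fix(\phi^k)$. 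The paper's resolution is a citation to Levitt (\cite[Corollary 1.2]{levitt2007gbs}): precisely in cases (1) and (2) --- $|p|\neq 1$, $|q|\geq|p|$, with $p=-q$ or $p\nmid q$ --- the group $\bs(p,q)$ is algebraically rigid and $\aut^T(G)=\aut(G)$ on the nose, after which Theorems \ref{thm:characterisingFg} and \ref{thm:bounded} apply verbatim. In particular the bound $3=2|V(T/G)|+1$ in case (1) comes straight from Theorem \ref{thm:bounded}(2); your detour through a finite-index $F\times\mathbb{Z}$ subgroup is unnecessary and would again face the problem that fixed subgroups do not transfer well along finite-index passage. For case (3) no rigidity is needed, since one only has to exhibit a single automorphism, and any tree-preserving one will do.

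A second, smaller error: you invoke Lemma \ref{lem:Betti2InfiniteGen} for case (3) (and as the model for the unbounded family in case (2)), but that lemma requires Betti number at least $2$, whereas $\bs(p,q)$ has Betti number $1$. The correct tool is Lemma \ref{lem:integralModInfiniteGen}: its first alternative ($\Delta(t)\in\mathbb{Z}\setminus\{-1\}$) produces the non-finitely-generated fixed subgroup in case (3), and its second alternative ($\Delta(t),\Delta(t^{-1})\notin\mathbb{Z}$ with Betti number $1$) produces the fixed subgroups of arbitrarily large finite rank needed for the unboundedness claim in case (2). The twist automorphisms $t\mapsto tx^{pN}$ used there are essentially the family you guessed.
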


We remark that it immediately follows, for example, that $\bs(2,3)$ has infinitely many isomorphism types of fixed subgroup.

\p{GBS systems}
A \emph{GBS system} $(G, T)$ is a finitely generated group $G$ and a tree $T$, along with an action $G\curvearrowright T$ where all edge and vertex stabiliers are infinite cyclic.
A \emph{Generalised Baumslag--Solitar (GBS) group} is a group $G$ which is part of a GBS system.
A GBS group is \emph{elementary} if it is isomorphic to $\mathbb{Z}$, $\mathbb{Z}^2$ or the Klein bottle group, and otherwise is \emph{non-elementary}.

A GBS system $(G,T)$ can be conveniently written as a graphs of groups $T//G$. Since all of the edge and vertex groups are $\mathbb{Z}$, the only data to record is the inclusion maps. Any homomorphism from $\mathbb{Z}$ to itself is multiplication by an integer, so we regard $T//G$ as a graph with the ends of edges labelled by non-zero integers.


For a GBS group $G$, an automorphism $\phi\in\aut(G)$ is an automorphism of the GBS system $(G, T)$, written $\phi \in \aut^T(G)$, if $\phi$ leaves the tree $T$ invariant.
We give a precise definition in \Cref{sec:CompatibleAutomorphismsPreliminaries}.

\p{Finite generation}
A GBS system is \emph{$1$-free} if every edge stabiliser is a proper subgroup of both adjacent vertex stabiliers or, equivalently, the corresponding $\mathbb{Z}$-labelled graph has no label $\pm 1$.
Our main result classifies finite generation of fixed subgroups in this setting.

Its statement uses two group invariants of non-elementary GBS groups.
Firstly, $\beta(G)$ is the first Betti number of the quotient graph $T/G$, which is defined as $1-|V(T/G))|+|E(T/G))|$.
This 
turns out to be a property of the GBS group, independent of the choice of system $(G, T)$ (see Lemma \ref{lem:bettiIndependence}).
Secondly, $\Delta(G)$ is the modulus of $G$, as defined in \Cref{sec:GBSDfns}, and is a subgroup of $\mathbb{Q}^*$ which can be easily computed by looking at the loops in $T//G$.

\begin{thmletter}[\Cref{thm:characterisingFg}]\label{thmintro:characterisingFg}
    Suppose $(G,T)$ is a $1$-free, non-elementary GBS system.
    Then $\fix(\phi)$ is finitely generated for all $\phi \in \aut^T(G)$ if and only if one of the following occurs:

    \begin{enumerate}
        \item $\beta(G)=0$, or
        \item $\beta(G)=1$ and either $\Delta(G) = \{1, -1\}$ or $\Delta(G)$ is not generated by an integer.
    \end{enumerate}
\end{thmletter}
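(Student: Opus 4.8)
The plan is to work with the Bass-Serre tree $T$ and the invariant tree structure preserved by $\phi \in \aut^T(G)$. Since $\phi$ leaves $T$ invariant, it induces an automorphism $\bar\phi$ of the quotient graph $T/G$, and the key is to understand how $\fix(\phi)$ inherits a splitting from the action on $T$. The main tool will be Bass-Serre theory: $\fix(\phi)$ acts on the fixed-point set or on a suitable $\phi$-invariant subtree, and I would analyse the resulting graph of groups decomposition of $\fix(\phi)$. The two group invariants $\beta(G)$ (first Betti number of the quotient graph) and $\Delta(G)$ (the modulus) are the natural quantities governing the global topology and the ``scaling'' behaviour of loops, so I expect the proof to split into cases according to their values exactly as the statement does.

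First I would handle the forward direction's positive cases (showing finite generation holds). When $\beta(G) = 0$, the quotient graph $T/G$ is a tree, so $G$ itself is (up to the $1$-free hypothesis) built without loops; I would argue that the absence of loops forces the $\phi$-fixed part to be controlled by the vertex and edge groups, which are cyclic, and that the induced splitting of $\fix(\phi)$ has finitely many vertex groups each of which is finitely generated — invoking an argument analogous to the tree case mentioned in the abstract. For $\beta(G)=1$, the single loop contributes a modulus, and the hypotheses $\Delta(G)=\{1,-1\}$ (the unimodular case, where the loop behaves like a surface-type relation) or $\Delta(G)$ not generated by a single integer (so no genuine ascending/descending scaling is available) should each obstruct the construction of an infinitely-generated fixed subgroup. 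Here I would show that the ``stacking'' that produces infinite generation cannot occur because either the modulus is trivial in absolute value or the relevant powers fail to line up integrally.

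Next I would establish the converse by constructing, in the remaining cases, an automorphism with non-finitely generated fixed subgroup. The cases not covered are $\beta(G) \geq 2$, and $\beta(G)=1$ with $\Delta(G)$ generated by an integer other than $\pm 1$. For these I would build on the explicit infinite family promised in Lemma \ref{lem:Betti2InfiniteGen}: when $\beta(G) \geq 2$ there is enough room in the fundamental group of the graph to exhibit an automorphism whose fixed subgroup contains an infinite strictly ascending chain of elements that cannot be captured by finitely many generators, and the $\beta(G)=1$ integral-modulus case reduces to a Baumslag-Solitar-type phenomenon as in Theorem \ref{thmintro:baumslagsolitar}(3). The construction would likely use the modulus to produce conjugating elements $t^n x t^{-n}$ whose fixed combinations accumulate.

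The hard part, I expect, will be the precise control of $\fix(\phi)$ via its action on $T$ in the case $\beta(G)=1$, where finite versus infinite generation hinges delicately on the arithmetic of $\Delta(G)$: distinguishing ``$\Delta(G)$ generated by an integer'' from ``not generated by an integer'' requires a careful analysis of when the edge-group inclusion maps permit an infinite sequence of non-conjugate fixed elements, and tying this number-theoretic condition to a structural statement about generation of the subgroup. I would isolate this as a key lemma. A secondary obstacle is verifying that $\beta(G)$ and $\Delta(G)$ are genuinely invariants of $G$ (independent of the system $(G,T)$), but that is supplied by Lemma \ref{lem:bettiIndependence} and the definitions in \Cref{sec:GBSDfns}, so I would cite those rather than reprove them.
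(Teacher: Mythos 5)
Your plan correctly identifies the governing invariants, the general strategy (let $\fix(\phi)$ act on a $\phi$-invariant subtree and study the resulting graph of groups), and the right shape of the converse (explicit automorphisms in the excluded cases). However, it is missing the mechanisms that actually make the paper's argument work, and as written the positive direction would not go through. First, there is no case analysis on how $\phi$ acts on $T$: the paper first disposes of the cases where $\phi$ fixes no point of $T$ (Theorem \ref{thm:phi_hyperbolic}, where $\fix(\phi)$ embeds in the Klein bottle group) and where $\phi$ inverts the stabiliser of a fixed point, i.e.\ $\sgn(\phi)=-1$ (Proposition \ref{prop:negSgnFg}, where $\fix(\phi)$ acts freely on $T^\phi$ and is finite-rank free). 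Only after this reduction is $T^\phi$ a genuine fixed-point subtree on which $\fix(\phi)$ acts with infinite cyclic stabilisers; your assertion that the fixed part is ``controlled by the vertex and edge groups, which are cyclic'' is only true in the $\sgn(\phi)=1$ case and needs this trichotomy to be set up. Second, you never explain why the quotient $T^\phi/\fix(\phi)$ is \emph{finite}, which is the whole content of the positive direction: the paper proves that $T^\phi/\fix(\phi)$ immerses into $T/G$ as a labelled graph (Lemmas \ref{lem:orbitsInTphi} and \ref{lem:tphiInheretsLabels}), so when $\beta(G)=0$ the immersion is an injection into a finite tree, and when $\beta(G)=1$ an infinite quotient would force a ray wrapping around the unique loop, producing points $g^kv\in T^\phi$ with $\phi(g^k)=g^kx^{l_k}$ and $l_k=\sum_{i=0}^{k-1}\Delta(g)^i l$; the contradiction with $\Delta(g)=-1$ (giving $l_2=0$) or $\Delta(g)\notin\mathbb{Z}$ (giving non-integral $l_k$) is the arithmetic you gesture at but do not supply. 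Finite generation then follows from Bass's criterion (Proposition \ref{prop:bassFg}).

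Third, you do not locate where the $1$-freeness hypothesis is used, and it is essential on both sides: it gives minimality of $\fix(\phi)\curvearrowright T^\phi$ (Lemma \ref{lem:sign1minimal}), without which an infinite quotient graph would not certify non-finite generation via Bass's criterion, so your converse constructions would only show the quotient is infinite, not that the group fails to be finitely generated. Two smaller points: the $\beta(G)=0$ case here is \emph{not} the BNS-invariant argument ``mentioned in the abstract'' (that is Theorem \ref{thm:treeFg}, for all of $\aut(G)$, by a completely different method); and the paper's constructions in the converse are twist automorphisms $t\mapsto tx^{pN}$ whose fixed subgroups contain infinitely many orbit classes of vertices $t^kv$, rather than accumulating conjugates $t^nxt^{-n}$.
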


The proof of \Cref{thmintro:characterisingFg} has two stages.
Firstly, we give sufficient conditions on $\phi \in \aut^T(G)$ which imply that $\fix(\phi)$ is finitely generated. These are summarised in \Cref{thm:sufficientFg}, although the most complex case is treated separately in Proposition \ref{prop:BettiNumber1fgFix}. 
Notably, for this direction we do not need to assume that the underlying GBS system $(G, T)$ is $1$-free.
Secondly, in \Cref{sec:nonFG_examples} we work under the $1$-free assumption and give explicit examples of automorphisms with non-finitely generated fixed subgroups in the relevant cases.

One notable case of Theorem \ref{thm:sufficientFg} is that $\phi$ is of finite order.
Every finite order automorphism preserves some GBS tree $T$ \cite{guirardel2007deformation}, so in this case we can drop the restriction to $\aut^T(G)$.

\begin{thmletter}[\Cref{cor:allFiniteOrder}]\label{thmintro:allFiniteOrder}
    Let $G$ be a GBS group, and $\phi \in \aut(G)$ be of finite order. Then $\fix(\phi)$ is finitely generated.  
\end{thmletter}

Note that \Cref{thmintro:allFiniteOrder} requires the automorphism $\phi$ to be of finite order, not just the outer automorphism class $[\phi]$.
To see this is necessary, consider $\BS(1, n)=\langle a, t\mid t^{-1}at=a^n\rangle$ for $|n|>1$, which splits as $\mathbb{Z}[1/n]\rtimes\mathbb{Z}$. Take $\phi$ to be conjugation by $a$, so $[\phi]$ has finite order in $\operatorname{Out}(\BS(1, n))$ since $\phi$ is inner.
However, $\fix(\phi)$ is the centraliser of $a$, which is the factor $\mathbb{Z}[1/n]$ in the semidirect product decomposition, and is non-finitely generated.

\p{Bounded Generation}
Theorem \ref{thmintro:characterisingFg} classifies finite generation under certain assumptions.
Working under the same assumptions, we now classify when there is a bound on the rank of the fixed subgroups.

\begin{thmletter}[\Cref{thm:bounded}]\label{thmintro:bounded}
    Suppose $(G,T)$ is a 1-free, non-elementary GBS system. Then, for $\phi \in \aut^T(G)$:

    \begin{enumerate}
        \item If $\beta(G) = 0$, then $\rk(\fix(\phi)) \leq \max(1, 2|E(T/G)|)$.
        \item If $\beta(G) = 1$ and $\Delta(G) = \{1, -1\}$, then $\rk(\fix(\phi)) \leq 2|V(T/G)| + 1$. 
    \end{enumerate}
Otherwise, there is no bound on $\rk(\fix(\phi))$.
    
\end{thmletter}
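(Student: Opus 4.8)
The plan is to prove Theorem \ref{thmintro:bounded} by combining the finite-generation results of Theorem \ref{thmintro:characterisingFg} with a careful bookkeeping of generators in the two good cases, and then leveraging the non-finite-generation examples of \Cref{sec:nonFG_examples} to establish the absence of a bound in all remaining cases.

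First, I would handle case (1), where $\beta(G) = 0$, meaning the quotient graph $T/G$ is a tree. Since we already know from Theorem \ref{thmintro:characterisingFg} that $\fix(\phi)$ is finitely generated here, the task is purely quantitative. The natural approach is to use the invariance of the Bass--Serre tree $T$ under $\phi$: because $\phi \in \aut^T(G)$, it acts on $T$ and hence permutes the (finitely many) $G$-orbits of vertices and edges. I would look at the fixed subgroup's own action on $T$ (or on the fixed-point subtree / minimal invariant subtree) and extract a graph-of-groups decomposition for $\fix(\phi)$. The rank bound $\max(1, 2|E(T/G)|)$ strongly suggests that each edge of the quotient graph contributes at most two generators to $\fix(\phi)$ (one from each "side" of the edge, roughly the fixed part of each incident vertex group together with the Bass--Serre stable-letter contribution), with a correction term handling the degenerate situation where the whole fixed subgroup is cyclic. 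The key technical step is to show that the fixed subgroup of each vertex- and edge-stabiliser is cyclic (since stabilisers are infinite cyclic, their $\phi$-fixed subgroups are either trivial or $\mathbb{Z}$) and to assemble these into a generating set via a Bass--Serre / graph-of-groups counting argument on a tree.

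Second, for case (2), where $\beta(G) = 1$ and $\Delta(G) = \{1,-1\}$, I expect the argument to route through Proposition \ref{prop:BettiNumber1fgFix}, which the excerpt flags as the most delicate sufficient-generation case. Here the quotient graph has a single independent loop, and the modulus condition $\Delta(G) = \{1,-1\}$ is exactly what prevents the "scaling" behaviour that produces infinitely generated fixed subgroups (as in the $\BS(1,n)$ phenomenon described after Theorem \ref{thmintro:allFiniteOrder}). The bound $2|V(T/G)| + 1$ is phrased in terms of vertices rather than edges, so I would track one contribution of cyclic fixed data per vertex (two per vertex, accounting for the way the stable letter interacts with each vertex group), plus a single extra generator coming from the stable letter associated to the unique loop. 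The crucial point is to verify that under $\Delta(G) = \{1,-1\}$ the relevant induced map on the loop's stable direction is, up to sign, the identity, so that it contributes only a bounded (indeed single) amount rather than an unbounded tower of conjugates.

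Third, for the final "Otherwise" clause, I would invoke the examples constructed in \Cref{sec:nonFG_examples}. The cases not covered by (1) and (2) are precisely those where Theorem \ref{thmintro:characterisingFg} already fails to give finite generation (so there exist automorphisms with infinite-rank fixed subgroups, giving trivially no bound), together with the remaining finitely-generated-but-unbounded case $\beta(G) = 1$ with $\Delta(G)$ generated by an integer other than giving $\{1,-1\}$ but where finiteness still holds via Theorem \ref{thmintro:characterisingFg}(2). For the latter I would exhibit an explicit family $\phi_k$ (parametrised by an integer $k$) whose fixed subgroups have rank growing without bound; Lemma \ref{lem:Betti2InfiniteGen} and the $\BS(p,q)$ computations underlying Theorem \ref{thmintro:baumslagsolitar}(2) are the model. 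The main obstacle I anticipate is case (2): producing a \emph{tight} generating set realising the linear bound $2|V(T/G)|+1$, since one must control not just the vertex-group fixed data but also how the single loop's stable letter and the action on the Bass--Serre tree conspire — in particular ruling out that iterated conjugation by the stable letter forces extra independent generators, which is exactly where the hypothesis $\Delta(G) = \{1,-1\}$ must be used decisively.
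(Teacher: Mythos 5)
Your plan matches the paper's proof, which simply cites Corollary \ref{corol:contractible_combined} for case (1), Proposition \ref{prop:BettiNumber1fgFix} for case (2), and, for the remaining cases, either the non-finitely-generated examples underlying Theorem \ref{thm:characterisingFg} or the unbounded-rank family produced in the second half of Lemma \ref{lem:integralModInfiniteGen} (the $\Delta(t)\notin\mathbb{Z}$ case, which is indeed the $p\nmid q$ Baumslag--Solitar model you name). One small correction to your bookkeeping: the factor $2|E(T/G)|$ in case (1) actually comes from the $\sgn(\phi)=-1$ case, where $\fix(\phi)$ is free and $T^{\phi}/\fix(\phi)$ has at most twice as many edges as $T/G$, not from two generators per edge of a tree-of-groups decomposition --- in the $\sgn(\phi)=1$ case the relevant bound is just $|V(T/G)|$.
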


We remark that for certain GBS groups, those which are \emph{algebraically rigid}, there is essentially only 1 GBS system, and $\aut^T(G) = \aut(G)$ (see \cite{levitt2007gbs} for the graphical characterisation of algebraic rigidity). Thus Theorem \ref{thmintro:characterisingFg} and Theorem \ref{thmintro:bounded} are a complete classification for algebraically rigid GBS groups where the sole reduced system is 1-free. It is essentially this observation that yields Theorem \ref{thmintro:baumslagsolitar}.

\p{Arbitrary group automorphisms}
The conditions in \Cref{thmintro:characterisingFg} are group invariants, meaning that \Cref{thmintro:characterisingFg} applies to all automorphisms of any 1-free GBS system of a GBS group $G$.
However, one may wonder what happens when the restriction to $\aut^T(G)$ is lifted.
It would be interesting to know if \Cref{thmintro:characterisingFg} gives a complete classification of when GBS groups have only finitely generated fixed subgroups. 

In the setting where $\beta(G) = 0$, we prove that all fixed subgroups are finitely generated.
There is no restriction to only automorphisms of the GBS system, or regarding $1$-freeness.

\begin{thmletter}[\Cref{thm:treeFg}]\label{thmintro:treeFg}
    Suppose $G$ is a non-elementary GBS group with $\beta(G) = 0$.
    Then for all $\phi \in \aut(G)$, $\fix(\phi)$ is finitely generated.
\end{thmletter}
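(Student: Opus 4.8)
The plan is to exploit the strong structure forced by $\beta(G)=0$. First I would record the structural lemma that a non-elementary GBS group $G$ with $\beta(G)=0$ has no loops in $T//G$, so $\Delta(G)=\{1\}$ and $G$ has infinite cyclic centre $Z=Z(G)\cong\mathbb{Z}$, with quotient $Q:=G/Z$ the fundamental group of a finite tree of finite groups. In particular $Q$ is virtually free, hence hyperbolic, and is generated by torsion, so $\mathrm{Hom}(Q,\mathbb{Z})=0$ (equivalently $b_1(Q)=0$). Since the centre is characteristic, every $\phi\in\aut(G)$ satisfies $\phi(Z)=Z$, inducing $\bar\phi\in\aut(Q)$, while $\phi|_Z$ is $\pm\mathrm{id}$. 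Because $Q$ is hyperbolic, $\fix(\bar\phi)$ is finitely generated (indeed quasiconvex) by \cite{Hyperbolic}, so the entire problem reduces to controlling how $\fix(\phi)$ sits over $\fix(\bar\phi)$.

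Write $\pi\colon G\to Q$. For $\bar g\in\fix(\bar\phi)$ with lift $g$, the element $\phi(g)g^{-1}$ lies in $Z$, and I would show it measures the obstruction to lifting $\bar g$ to a fixed point, yielding a short exact sequence $1\to \fix(\phi)\cap Z\to\fix(\phi)\xrightarrow{\pi}H\to 1$ with $H\le\fix(\bar\phi)$; hence $\fix(\phi)$ is finitely generated if and only if $H$ is. The behaviour now splits on $\phi|_Z$. If $\phi|_Z=-\mathrm{id}$ then $\fix(\phi)\cap Z=1$, and replacing a lift $g$ by $gz'$ alters $\phi(g)g^{-1}$ by $z'^{-2}$; thus the obstruction is well defined modulo $2Z$ and gives a homomorphism $\fix(\bar\phi)\to\mathbb{Z}/2$ whose kernel is exactly $H$. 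So $H$ has index at most $2$ in the finitely generated group $\fix(\bar\phi)$ and $\fix(\phi)\cong H$ is finitely generated; this case is quick.

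The hard case is $\phi|_Z=\mathrm{id}$. Here $\phi(g)g^{-1}$ depends only on $\bar g$ and defines a genuine homomorphism $c\colon\fix(\bar\phi)\to Z\cong\mathbb{Z}$ with $H=\ker c$ and $\fix(\phi)=\pi^{-1}(\ker c)$, so finite generation of $\fix(\phi)$ becomes equivalent to finite generation of $\ker c$. This is the main obstacle, and it is genuine: the kernel of a surjection from a finitely generated (virtually) free group to $\mathbb{Z}$ need not be finitely generated. The intended resolution is to use the action of $\fix(\bar\phi)$ on the Bass–Serre tree of $Q$, which presents $\fix(\bar\phi)$ as the fundamental group of a finite graph of finite groups $\Lambda$; since the vertex groups are finite, $c$ factors through $H_1(\Lambda)$, and $\ker c$ is finitely generated precisely when the $\mathbb{Z}$-cover of $\Lambda$ determined by $c$ has finitely generated fundamental group. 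I would establish finiteness of this core by combining the constraint $\mathrm{Hom}(Q,\mathbb{Z})=0$ with the fact that $c$ comes from an automorphism of the GBS group $G$, not merely of $Q$, which restricts the loops of $\Lambda$ on which $c$ can be nonzero. Finally it is worth noting why the hypotheses are sharp: if $\phi$ has finite order then iterating $\phi(g)=gz_0$ forces $z_0^m=1$, so $c=0$ automatically (recovering the finite-order result), whereas for infinite-order $\phi$ the vanishing of $c$ can fail, and it is exactly $\beta(G)=0$ that must be invoked to salvage finite generation.
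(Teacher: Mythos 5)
Your overall architecture is close to the paper's: both arguments hinge on the characteristic central copy of $\mathbb{Z}$, on the finite generation of $\fix(\bar\phi)$ for the induced automorphism of the virtually free (hence hyperbolic) quotient $Q=G/Z(G)$ (this is exactly Lemma \ref{lem:gphi}, phrased there via the subgroup $G_\phi=\{g\mid g^{-1}\phi(g)\in Z(G)\}$), and on a case split according to $\phi|_{Z(G)}=\pm\mathrm{id}$. Your treatment of the case $\phi|_{Z(G)}=-\mathrm{id}$ is correct and takes a genuinely different, more elementary route than the paper: Lemma \ref{lem:negCentreFixFg} realises $\fix(\phi)$ as the kernel of $g\mapsto g^{-1}\phi(g)\in Z(G)\cong\mathbb{Z}$ on $G_\phi$ and invokes BNS invariants (Theorem \ref{thm:BNSfg} together with Lemma \ref{lem:BNSCL}), whereas you observe that $\fix(\phi)\cap Z(G)=1$ and that $\fix(\phi)$ injects onto the index-at-most-$2$ subgroup $\ker(c\bmod 2)$ of $\fix(\bar\phi)$. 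Both work in the full generality of GBS groups with centre, so nothing is lost by your shortcut.

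The genuine gap is in the case $\phi|_{Z(G)}=\mathrm{id}$, which you correctly identify as the crux but do not close. Your proposed mechanism --- that $\mathrm{Hom}(Q,\mathbb{Z})=0$ together with the fact that $c$ ``comes from an automorphism of $G$'' restricts the loops of $\Lambda$ on which $c$ can be nonzero --- is not an argument: $\mathrm{Hom}(Q,\mathbb{Z})=0$ says nothing about homomorphisms $\fix(\bar\phi)\to\mathbb{Z}$, since $\fix(\bar\phi)$ is merely a (typically free, possibly infinite-index) subgroup of $Q$, and such subgroups surject onto $\mathbb{Z}$ in abundance; nor do you extract any concrete constraint on $c$ from its provenance. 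What actually happens, and what the paper proves, is that $c$ vanishes identically, and this is detected in $\ab(G)$ rather than in $Q$: because $\beta(G)=0$, the abelianisation $\ab(G)$ has rank one and the image $\bar z$ of a central generator has infinite order in it; since $\phi(z)=z$, the induced map on $\ab(G)\otimes\mathbb{Q}\cong\mathbb{Q}$ is the identity; but for $g\in G_\phi$ with $g^{-1}\phi(g)=z^{i}$ one has $\bar\phi(\bar g)=\bar g+i\bar z$, forcing $i=0$. Hence $G_\phi=\fix(\phi)$, and Lemma \ref{lem:gphi} finishes the proof. Your sketch never descends to $\ab(G)$, which is the one place the hypothesis $\beta(G)=0$ actually enters in this case, so as written the hard half of the argument is missing.
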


The proof of \Cref{thmintro:treeFg} is quite different to that of \Cref{thmintro:characterisingFg}, using BNS-invariants. However, these tools do not extend to the general case, leaving the following question to answer.

\begin{quesletter}
    \label{Question:MainQuestion}
    Let $G$ be a GBS group such that $\beta(G) = 1$ and $\Delta(G)$ is not generated by an integer other than $-1$. Does there exist $\phi \in \aut(G)$ such that $\fix(\phi)$ is not finitely generated?
\end{quesletter}

For now, this question seems hard to approach since we do not have a full picture of what $\aut(G)$ looks like for GBS groups; in comparison, the subgroups $\aut^T(G)$ are well understood \cite{levitt2007gbs}. We remark that automorphisms not in some $\aut^T(G)$ are precisely those acting without a fixed point on the associated \emph{deformation space}, which is analogous to outer space for $\mathrm{Out}(F_n)$. It is plausible that the right tool for analysing fixed subgroups of such automorphisms is a suitable analogue of train tracks for graphs of groups; train tracks for graphs were the key technical tool in \cite{TrainTracks}. 
See, for example, Lyman's work for results in this direction \cite{lyman2022train}.

\section*{Acknowledgements}

We are grateful to Gilbert Levitt for suggesting the proof of Theorem \ref{thm:treeFg}. The first named author would like to thank Yassine Guerch and Stefanie Zbinden for interesting discussions, and their supervisor Laura Ciobanu for her support.

\section{Preliminaries: GBS groups and Compatibility}

This paper revolves around Bass-Serre Theory, which is the theory of groups acting on trees.
We therefore now give preliminaries on this theory, we define GBS groups using actions on trees, as well as giving certain properties applied later, and we define and briefly study compatible automorphisms.

\subsection{Bass-Serre Theory}

A tree is a simply connected graph, and a tree with an action of a group $G$ is a $G$-tree. We assume the reader is familiar with the basic theory of group actions on trees, see for instance Serre's book \cite{trees}. For $s$ an edge or vertex of a $G$-tree, we write $G_s$ for the $G$-stabiliser. For $X \subseteq T$ (which will always be a subtree in practice), we will write $\stab(X)$ for the setwise stabiliser.
For a $G$-tree $T$ and en element $g \in G$, we write $\ell(g) := \inf \{d(x,gx) \mid x \in T\}$.


\p{Forgetful map}
The forgetful map goes from a fundamental group of a graph of groups to a free group, and is used several times throughout the paper:
    Given a group $G$ acting on a tree $T$, and $v \in T$, the \emph{forgetful map} is $p_*: G \rightarrow \pi_1(T/G, p(v))$ given by $g \mapsto \overline{p([v, gv])}$, where $p: T \rightarrow T/G$ is the obvious projection.
It is not hard to check that $\ker(p_*)$ does not depend on the choice of $v$.

\p{Minimal actions}
The action of a group $G$ on a tree $T$ is \emph{minimal} if there is no proper, non-empty, $G$-invariant subtree.
We require a result of Bass, which uses minimal actions to determine whether a group acting on a tree is finitely generated or not.

\begin{proposition}\cite[Proposition 7.9]{bass1993covering}\label{prop:bassFg}
    Let $G$ be a group acting on a tree $T$.

    \begin{enumerate}
        \item If the vertex stabilisers are finitely-generated and $T/G$ is finite, then $G$ is finitely generated.
        \item If the action is minimal and $G$ is finitely generated, then $T/G$ is finite.
    \end{enumerate}
\end{proposition}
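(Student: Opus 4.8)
The plan is to treat the two statements as the two sides of the standard Bass--Serre correspondence between $G$-orbits of cells in $T$ and cells of the quotient $T/G$; in each direction the engine is the choice of a lift of a fundamental domain together with an inductive argument that moves cells into this lift. Throughout I write $p\colon T\to T/G$ for the projection.

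For (1), set $Y=T/G$ and choose a maximal subtree $Z\subseteq Y$, lifted to a subtree $\tilde Z\subseteq T$ on which $p$ restricts to an isomorphism. Since $Y$ is finite, $\tilde Z$ is a finite subtree carrying exactly one vertex in each $G$-orbit. For every directed edge $e$ of $Y$ I would fix a lift $\tilde e$ whose initial vertex lies in $\tilde Z$, producing a ``stable letter'' $g_e\in G$ and a vertex $\tilde v\in V(\tilde Z)$ with $t(\tilde e)=g_e\cdot \tilde v$ (taking $g_e=1$ when $e\in Z$). Let $H\leq G$ be generated by the finitely many vertex stabilisers $G_{\tilde v}$ ($\tilde v\in V(\tilde Z)$), each finitely generated by hypothesis, together with the finitely many $g_e$. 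The core step is to prove $\bigcup_{h\in H}h\tilde Z=T$ by induction on $d(w,\tilde Z)$ for a vertex $w$: if $w'$ is the neighbour of $w$ one step closer to $\tilde Z$, induction gives $h'\in H$ with $h'^{-1}w'\in\tilde Z$, and the edge $[h'^{-1}w',h'^{-1}w]$ agrees with one of the chosen lifts up to an element of the stabiliser of its initial vertex; both this stabiliser element and the relevant $g_e$ lie in $H$, so they carry $w$ into $H\tilde Z$. Once $H\tilde Z=T$, a short argument finishes: any $g\in G$ sends a fixed $\tilde v\in\tilde Z$ to a vertex of $H\tilde Z$ in the same orbit, forcing $h^{-1}g\in G_{\tilde v}\subseteq H$, whence $H=G$ is finitely generated.

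For (2), assume $G=\langle S\rangle$ with $S$ finite and symmetric, fix a base vertex $v_0$, and set $\tilde K=\bigcup_{s\in S}[v_0,sv_0]$, a finite subtree. I would then consider the $G$-invariant subgraph $K=\bigcup_{g\in G}g\tilde K$. The key observation is that $K$ is connected: for each $s\in S$ the translates $\tilde K$ and $s\tilde K$ share the vertex $sv_0$, so for any word $s_1\cdots s_k$ in $S$ the chain $\tilde K,\,s_1\tilde K,\,s_1s_2\tilde K,\dots$ is connected, and since $S$ generates $G$ every translate $g\tilde K$ is joined to $\tilde K$ inside $K$. Thus $K$ is a non-empty $G$-invariant subtree, so minimality forces $K=T$. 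Finally, every cell of $T$ lies in some $g\tilde K$, so every orbit meets the finite set $\tilde K$; hence $T/G$ has finitely many vertex- and edge-orbits and is therefore finite.

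The main obstacle in both directions is this covering argument rather than any delicate estimate. In (1) the subtle point is the inductive step, where one must check that the ambiguity in lifting the attaching edge is exactly absorbed by a vertex stabiliser; this is where it is essential that $\tilde Z$ meets each vertex orbit precisely once. In (2) the crux is establishing connectivity of $K$ so that minimality may be invoked, and symmetrising $S$ and chaining translates along words is what makes this clean. Beyond careful bookkeeping with orbits and lifts, I expect no serious difficulty.
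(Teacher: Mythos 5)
Your proof is correct: part (1) is the standard argument of lifting a maximal subtree of $T/G$, adjoining one stable letter per remaining edge orbit, and showing by induction on distance to the lift that the resulting subgroup acts transitively on each vertex orbit; part (2) is the standard saturation argument where the $G$-orbit of the finite subtree $\bigcup_{s\in S}[v_0,sv_0]$ is connected and $G$-invariant, hence all of $T$ by minimality. The paper does not prove this proposition itself but imports it from Bass's covering theory paper, and your argument is essentially the proof found there (and in Serre), so there is nothing to flag beyond noting that both directions, including the absorption of the lifting ambiguity into a vertex stabiliser in (1), are handled correctly.
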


\subsection{Generalised Baumslag--Solitar (GBS) groups}
\label{sec:GBSDfns}

Recall from the introduction that a \emph{GBS system} $(G, T)$ encodes the action $G\curvearrowright T$ of a finitely generated group $G$ on a tree $T$, where all edge and vertex stabiliers are infinite cyclic, and that a \emph{GBS group} is a group $G$ which is part of a GBS system.

\p{The Betti number of a GBS group}
Given a $G$-tree $T$, we will write $T/G$ for the quotient graph, which is not yet a graph of groups and depends only on $(G,T)$.
The \emph{Betti number} of a GBS system is the Betti number of the quotient graph $T/G$.
This is equal to the fundamental group of this graph, which is $1-|V(T/G)|+|E(T/G)|$.
The following lemma is standard, see for instance \cite{levitt2007gbs}, and says that, apart from in a single case, the Betti number is a group invariant.

\begin{lemma}\label{lem:bettiIndependence}
    Suppose $G$ is a GBS group not isomorphic to the Klein bottle group. Then the Betti number of a GBS system $(G,T)$ does not depend on the choice of $T$.
\end{lemma}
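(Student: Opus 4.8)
The plan is to extract $\beta(G)$ from the first rational Betti number $b_1(G) := \dim_{\mathbb{Q}} H_1(G;\mathbb{Q})$, which is manifestly an isomorphism invariant of $G$. Writing $(G,T)$ as a graph of groups over $\Gamma := T/G$ with all vertex and edge groups $\mathbb{Z}$, I would choose a spanning tree and take the usual presentation: one generator $a_v$ per vertex, one stable letter $t_e$ per edge outside the spanning tree, and one relation per edge identifying the two images of the edge group (of the form $a_{o(e)}^{\alpha_e} = a_{t(e)}^{\omega_e}$, conjugated by $t_e$ for non-tree edges). Abelianising, the stable letters survive freely and contribute a free summand of rank $\beta(G) = |E(\Gamma)| - |V(\Gamma)| + 1$, while the vertex generators contribute $\mathbb{Z}^{V(\Gamma)}$ modulo the $|E(\Gamma)|$ relations $\alpha_e a_{o(e)} - \omega_e a_{t(e)} = 0$. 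Hence $b_1(G) = \beta(G) + r$, where $r$ is the $\mathbb{Q}$-dimension of the cokernel of this weighted incidence matrix, equivalently the dimension of the space of vertex-labellings $f : V(\Gamma) \to \mathbb{Q}$ compatible with all edge inclusions.

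The next step is to pin down $r$ intrinsically. Since $\Gamma$ is connected and every label is non-zero, such an $f$ is determined by its value at one vertex by propagating along the spanning tree, and the non-tree edges impose exactly the condition that the product of label ratios around each loop is trivial; thus $r \in \{0,1\}$, with $r = 1$ precisely when a non-zero compatible $f$ exists. Clearing denominators, such an $f$ is the same datum as a non-trivial \emph{elliptic} central element of $G$: a non-zero tuple $(n_v)$ for which the elements $a_v^{n_v}$ all coincide with a common element $z$ lying in every edge group, which is then central and fixes a vertex. The key point is that for a \emph{non-elementary} GBS group every central element is elliptic, since a central hyperbolic element would have a $G$-invariant axis, forcing $G$ to preserve a line and hence be elementary, contrary to assumption. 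Therefore, for non-elementary $G$, we have $r = 1$ if and only if $Z(G) \neq 1$; as both $b_1(G)$ and $Z(G)$ are isomorphism invariants, so is $\beta(G) = b_1(G) - r$.

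It remains to treat the elementary cases and to locate the Klein bottle exception. For $G \cong \mathbb{Z}$ every GBS tree is a point, so $\beta(G) = 0$, and for $G \cong \mathbb{Z}^2$ every GBS tree is a line with circle quotient, so $\beta(G) = 1$; these need no modulus argument. The Klein bottle group $K$ is exactly where the elliptic-centre dichotomy fails: in its HNN presentation $\BS(1,-1)$ the generator of $Z(K)$ is the \emph{hyperbolic} element $t^2$, giving $r = 0$ and $\beta = 1$, whereas in the amalgam presentation $\langle a,b \mid a^2 = b^2\rangle$ the same centre is realised by the \emph{elliptic} element $a^2$, giving $r = 1$ and $\beta = 0$. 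I expect the main obstacle to be precisely this intrinsic identification of $r$: establishing that a non-trivial centre of a non-elementary GBS group is always elliptic, and confirming that $K$ is the unique group for which a central element can instead be hyperbolic. An alternative route, which is the one usually cited, bypasses homology entirely: any two reduced GBS trees for a fixed non-elementary GBS group lie in a common deformation space and are joined by elementary collapse, expansion and slide moves, each of which preserves $\beta$, the sole $\beta$-changing move being available only for $K$.
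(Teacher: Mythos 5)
The paper does not actually prove this lemma: it is stated as standard with a pointer to Levitt's paper, so there is no internal argument to compare against. Your homological proof is, in substance, the standard one from that reference, and it is correct: $b_1(G)=\beta+r$ with $r\in\{0,1\}$ the dimension of the space of compatible vertex-labellings, and $r=1$ exactly when every loop has modulus $1$, i.e.\ $\Delta(G)=\{1\}$, which for non-elementary $G$ is equivalent to $Z(G)\neq 1$; since $b_1(G)$ and the (non)triviality of the centre are isomorphism invariants, so is $\beta$. Three small points deserve a line each in a full write-up. First, the labelling space computing the cokernel (ratios $\omega_e/\alpha_e$ along edges) and the space of exponent tuples $(n_v)$ with the $a_v^{n_v}$ all equal to a common elliptic element (ratios $\alpha_e/\omega_e$) are not literally the same space, but each is non-zero precisely when every loop has modulus $1$, so the value of $r$ is unaffected; also, extracting the tuple $(n_v)$ from a central element uses commensurability of elliptic subgroups of a GBS tree to pass to a common power. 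Second, ``a central hyperbolic element forces $G$ to preserve a line, hence $G$ is elementary'' needs the observation that the pointwise stabiliser of the invariant line lies in an edge stabiliser, so $G$ is ($1$ or $\mathbb{Z}$)-by-(a subgroup of the infinite dihedral group), hence torsion-free virtually abelian, hence $\mathbb{Z}$, $\mathbb{Z}^2$ or the Klein bottle group. Third, a GBS tree for $\mathbb{Z}$ need not be a point (index-one inclusions allow larger trees), but this is harmless: your formula already gives $\beta=b_1-r=0$ for $\mathbb{Z}$ and $\beta=1$ for $\mathbb{Z}^2$, since $r=1$ automatically when $G$ is abelian. Your closing alternative via deformation-space moves (collapses, expansions and slides preserve $\beta$, with the Klein bottle group the unique exception) is the other standard route; the homological version has the advantage of exhibiting $\beta$ directly as $b_1(G)$ minus an isomorphism invariant, and of not requiring the trees to lie in a common deformation space.
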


\p{The modulus of a GBS group}
A GBS group is \emph{non-elementary} if it is not isomorphic to $\mathbb{Z}$, $\mathbb{Z}^2$ or the Klein bottle group.

    Given a non-elementary GBS group $G$, the \emph{modular homomorphism} $\Delta: G \rightarrow \mathbb{Q}^*$ is defined as follows. For any $g$, take $x$ acting elliptically in a GBS tree $T$. Then there exists $p, q \in \mathbb{Z}$ such that $gx^pg^{-1} = x^q$, and $\Delta(g) := \frac{p}{q}$. 

Note that all subgroups of elementary GBS groups are finitely generated of rank at most 2, and so the same is true of all fixed subgroups. As such, we will freely restrict to the non-elementary case whenever we need $\Delta$.

The map $\Delta$ does not depend on the choice of $T$ or $x$ \cite{kropholler1990note}. It is not hard to check that $\Delta$ is trivial on the kernel of the forgetful map $p_*$, and so $\Delta$ factors through $p_*$.

\subsection{Presentations and Fundamental Domains}
\label{sec:combinatorialFundamentalDomain}

In general, we prefer to work with groups acting on trees, avoiding the choices involved with writing a graph of groups or the corresponding presentation. When forced to fix a graph of groups, we will do so by choosing a fundamental domain in the Bass-Serre tree:
%
%
%
    Given a $G$-tree $T$, a \emph{fundamental domain} is a subtree $K$ meeting every orbit exactly once. If any point in the interior if an edge is in $K$, then every point in the interior of that edge is in $K$. 

\begin{notation}
    Given a GBS system $(G,T)$ with a fundamental domain $K$, we take the quotient graph of groups $T//G$ (where we suppress the choice of fundamental domain) to be $T/G$, where each (open) edge $e \in E(T/G)$ has an edge group $G_e$ equal to the stabiliser of the unique lift of $e$ in $K$, and each vertex has a vertex group $G_v$ equal to the stabiliser of the unique lift of $v$ in $K$.  
\end{notation}

\p{Presentations}
Given a GBS system and fundamental domain with a corresponding graph of groups $T//G$, we may build a presentation for $G$ by taking the fundamental group of the graph of groups $T//G$ in the usual way, by also choosing, for each $s \in V(T//G) \cup E(T//G)$, $x_s$ a generator of $G_s$; and for each $v \in \overline{K} \setminus K$ an element $t_v$ which sends the unique vertex of $K$ in the orbit of $v$ to $v$. We call these vertex generators and stable letters, respectively. In the presentation given by taking the fundamental group of $T//G$, the generators are exactly the vertex generators and stable letters.

We will call a presentation arising in such a way a \emph{presentation for $(G,T)$}.

\p{$\mathbb{Z}$-labelled graphs and $1$-freeness}
Sometimes, we will not care to remember the vertex groups of $T//G$ as subgroups of $G$. In these cases, since each vertex and edge group is isomorphic to $\mathbb{Z}$, the only remaining data is an inclusion of each edge group into the vertex groups on its endpoints. As such, we can write $T//G$ as a graph with integer labels at each endpoint of each edge.

    A GBS system $(G,T)$ is \emph{1-free} if there is no edge in $T$ whose $G$-stabiliser is equal to the $G$-stabiliser of one of its endpoints.
This is equivalent to $T//G$, viewed as a labelled graph, having no $\pm 1$ labels.

\subsection{Automorphisms and compatible actions}
\label{sec:CompatibleAutomorphismsPreliminaries}

Let $G$ be any group and let $T$ be a $G$-tree. Then $\aut^T(G)$ denotes the maximal subgroup of $\aut(G)$ leaving $T$ invariant. More precisely, if we regard $\Omega: G \rightarrow \aut(T)$ as the action of $G$, we say $\phi$ is in $\aut^T(G)$ if the $G$-trees $(T, \Omega)$ and $(T, \Omega \circ \phi)$ are equivariantly isometric. This group arises as the stabiliser of $(T, \Omega)$ in the associated deformation space, in the sense of Forester \cite{forester2002deformation}.

For our purposes, a more convenient characterisation of $\aut^T(G)$ (shown to be equivalent in Lemma \ref{lem:equivalentCompatible}) uses compatible actions:
    Suppose $G$ acts on a tree $T$. If $\inn(G) \leq A \leq \aut(G)$ makes the diagram in Figure \ref{fig:CommDiagBassJiang} commute, then $A$ has a \emph{compatible action} on $T$.
The subgroup $\aut^T(G)$ is the maximal such subgroup.
Note that each such a subgroup $A\leq\aut^T(G)$ has an action on $T$ extending the action of $G$, justifying the ``compatible'' label.

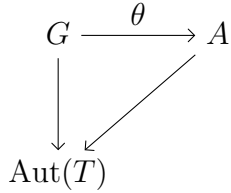
\begin{figure}[ht]
\begin{centering}
\begin{tikzpicture}[>=angle 90]
\matrix(a)[matrix of math nodes, row sep=3em, column sep=2.5em, text height=1.5ex, text depth=0.25ex] {G& A\\ \aut(T)\\};
\path[->](a-1-1) edge node[auto] {$\theta$}(a-1-2);
\path[->](a-1-2) edge (a-2-1);
\path[->](a-1-1) edge (a-2-1);
\end{tikzpicture}
\caption{\footnotesize The subgroup $\aut^T(G)$ is the maximal subgroup of $\aut(G)$ containing $\inn(G)$ making the diagram commute,  where $\theta$ is the canonical homomorphism with $\theta(G)=\inn(G)$.}\label{fig:CommDiagBassJiang}
\end{centering}
\end{figure}

We will always write $\cdot$ for the action of an automorphism, and write the action of $g$ by juxtaposition.

\p{Consequences of compatibility}
We will discuss several properties of compatible actions which shall be used throughout the paper, before proving that $\aut^T(G)$ is the unique maximal subgroup with a compatible action.

\begin{lemma}
\label{lem:Compatibility}
Suppose $A\leq\aut(G)$ has a compatible action on $T$, and let $\phi\in A$.
Then for all $g\in G$ and all $x\in T$ we have $\phi\cdot gx=\phi(g)(\phi\cdot x)$.
\end{lemma}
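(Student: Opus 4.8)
The plan is to exploit the fact that a compatible action realises $A$ as a subgroup of $\aut(T)$ via a group homomorphism, so that the desired identity becomes the translation of a purely algebraic relation in $A$. I would write $\Psi\colon A\to\aut(T)$ for the compatible action, so that $\phi\cdot x=\Psi(\phi)(x)$, and for $g\in G$ I would let $\iota_g\in\inn(G)\leq A$ denote conjugation by $g$ (that is, $\theta(g)=\iota_g$, where $\theta$ is the map of Figure~\ref{fig:CommDiagBassJiang}). The single fact I need to extract from commutativity of that diagram is that the inner automorphism $\iota_g$ acts on $T$ exactly as $g$ does; that is, $\iota_g\cdot x=gx$ for all $x\in T$. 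This is precisely the assertion that the composite $G\xrightarrow{\theta}A\xrightarrow{\Psi}\aut(T)$ equals the original action $\Omega$.

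Next I would record the standard identity governing how an automorphism conjugates an inner one: for any $\phi\in\aut(G)$ and $g\in G$ one has $\phi\,\iota_g\,\phi^{-1}=\iota_{\phi(g)}$, which follows by evaluating both sides on an arbitrary $h\in G$. Since $A$ contains $\inn(G)$ (indeed $\inn(G)\trianglelefteq\aut(G)$), this identity holds inside $A$, and I would rewrite it in the form $\phi\,\iota_g=\iota_{\phi(g)}\,\phi$.

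The argument then concludes by applying the homomorphism $\Psi$ to both sides of $\phi\,\iota_g=\iota_{\phi(g)}\,\phi$ and evaluating at $x\in T$. The left-hand side yields $\Psi(\phi)\bigl(\Psi(\iota_g)(x)\bigr)=\phi\cdot(gx)$, using $\iota_g\cdot x=gx$, while the right-hand side yields $\Psi(\iota_{\phi(g)})\bigl(\Psi(\phi)(x)\bigr)=\phi(g)(\phi\cdot x)$, using $\iota_{\phi(g)}\cdot y=\phi(g)y$. Equating the two gives $\phi\cdot(gx)=\phi(g)(\phi\cdot x)$, as required.

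There is no serious obstacle here: the lemma is essentially a formal unwinding of the definition of a compatible action. The only point requiring genuine care — and the conceptual heart of the argument — is the first step, namely reading off correctly from the commuting diagram that $\iota_g$ acts on $T$ exactly as $g$ does. Once that identification is in hand, the result is forced by $\Psi$ being a homomorphism together with the normality of $\inn(G)$ in $A$.
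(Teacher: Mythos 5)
Your proposal is correct and follows essentially the same route as the paper: both proofs hinge on the identity $\phi\,\iota_g\,\phi^{-1}=\iota_{\phi(g)}$ for inner automorphisms, combined with the fact (read off from the commuting diagram) that $\iota_g$ acts on $T$ as $g$ does, and then conclude by applying the action homomorphism. The paper simply carries out the same computation with the notation $\phi_g$ in place of your $\iota_g$ and without naming the map $\Psi$ explicitly.
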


\begin{proof}
We write $\phi_g$ for the inner automorphism corresponding to conjugation by $g$, so $\phi_g(h)=g^{-1}hg$ for all $h\in G$. Then $\phi\phi_g\phi^{-1}=\phi_{\phi(g)}$ as for all $h\in G$ we have:
\[
\phi\phi_g\phi^{-1}(h)=\phi(g^{-1}\phi^{-1}(h)g)=\phi(g^{-1})h\phi(g)=\phi_{\phi(g)}(h)
\]
We therefore have
\[
\phi\cdot gx = \phi\phi_g\cdot x = \phi\phi_g\phi^{-1}\phi \cdot x = \phi_{\phi(g)}\phi \cdot x = \phi(g)(\phi \cdot x)
\]
as required.
\end{proof}

Lemma \ref{lem:Compatibility} is essentially a notational tool, which we use without reference throughout the paper.

A particularly important special case of Lemma \ref{lem:Compatibility} will be Corollary \ref{cor:MinsetAction}. Here, and throughout, given $T$ a tree and an isometry $\gamma \in \mathrm{Iso}(T)$ (or possibly an element of a group acting on $T$, which we conflate with its action), we define \[T^\gamma := \{x \in T \mid d(x, \gamma x) = \ell(\gamma)\},\] that is $T^\gamma$ denotes the \emph{minset} of $\gamma$, which is always non-empty when $T$ is a tree. 

\begin{corollary}\label{cor:MinsetAction}
    Suppose $\phi$, $G$ and $T$ are as in \Cref{lem:Compatibility}. Then the action $G \curvearrowright T$ restricts to an action $\fix(\phi) \curvearrowright T^\phi$.
\end{corollary}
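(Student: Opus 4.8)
The plan is to verify directly that the minset $T^\phi$ is invariant under the subgroup $\fix(\phi)$, which is precisely the assertion that the action $G \curvearrowright T$ restricts to an action of $\fix(\phi)$ on $T^\phi$. Concretely, I need to show that for every $g \in \fix(\phi)$ and every $x \in T^\phi$ we have $gx \in T^\phi$; once invariance is established, the restricted map inherits the group-action axioms from the ambient action $G \curvearrowright T$, so it is genuinely an action.

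The key computation uses \Cref{lem:Compatibility} together with the defining property $\phi(g) = g$ of elements of $\fix(\phi)$. Writing $\phi \cdot x$ for the (isometry) action of $\phi$ on $T$, so that $T^\phi = \{x \in T \mid d(x, \phi \cdot x) = \ell(\phi)\}$, I would compute, for $g \in \fix(\phi)$ and $x \in T$,
\[
\phi \cdot (gx) = \phi(g)(\phi \cdot x) = g(\phi \cdot x),
\]
where the first equality is \Cref{lem:Compatibility} and the second uses $\phi(g) = g$. Since $g \in G$ acts on $T$ by an isometry, it follows that
\[
d(gx, \phi \cdot (gx)) = d\bigl(gx, g(\phi \cdot x)\bigr) = d(x, \phi \cdot x).
\]
Hence if $x \in T^\phi$, that is $d(x, \phi \cdot x) = \ell(\phi)$, then $d(gx, \phi \cdot (gx)) = \ell(\phi)$, so $gx \in T^\phi$, as desired.

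There is essentially no serious obstacle here: the statement is a direct consequence of \Cref{lem:Compatibility}, and the only point requiring care is keeping the two actions notationally distinct — the automorphism action $\phi \cdot (-)$ and the group action $g(-)$ — and invoking that each $g \in G$ acts by an isometry of $T$, so that it preserves the displacement $d(x, \phi \cdot x)$. I would also note in passing that $T^\phi$ is non-empty (as recorded after the definition of the minset, since $T$ is a tree), so that the restricted action is indeed an action on a non-empty set.
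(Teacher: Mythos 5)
Your proposal is correct and follows exactly the paper's argument: apply \Cref{lem:Compatibility} with $\phi(g)=g$ to get $\phi\cdot(gx)=g(\phi\cdot x)$, then use that $g$ acts by an isometry to conclude $d(gx,\phi\cdot gx)=d(x,\phi\cdot x)=\ell(\phi)$. No differences worth noting.
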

\begin{proof}
    Suppose $x \in T^\phi$ and $g \in \fix(\phi)$. Then $$d(gx, \phi \cdot gx) = d(gx, g(\phi \cdot x)) = d(x, \phi \cdot x) = \ell(\phi),$$ so in particular $gx \in T^\phi$.
\end{proof}

\p{Compatibility and $\aut^T(G)$}
With the tools of the previous results in hand, we are ready to prove that we can study all of $\aut^T(G)$ at once with one compatible action.

\begin{lemma}\label{lem:equivalentCompatible}
    Let $G$ be a group acting minimally on a tree $T$ which is not a line. Then $\aut^T(G)$ is the maximal subgroup of $\aut(G)$ making the diagram in Figure \ref{fig:CommDiagBassJiang} commute.
\end{lemma}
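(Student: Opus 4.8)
The plan is to show that $\aut^T(G)$ is simultaneously an upper bound for every subgroup carrying a compatible action and is itself equipped with one; together these two facts identify it as the unique maximal subgroup making the diagram in \Cref{fig:CommDiagBassJiang} commute.

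For the easy half, suppose $A\le\aut(G)$ has a compatible action and let $\phi\in A$, so that $\phi$ induces an isometry $x\mapsto\phi\cdot x$ of $T$. \Cref{lem:Compatibility} gives $\phi\cdot gx=\phi(g)(\phi\cdot x)$ for all $g\in G$ and $x\in T$; reading this as the assertion that $x\mapsto\phi\cdot x$ is an equivariant isometry between the $G$-trees $(T,\Omega)$ and $(T,\Omega\circ\phi)$, we conclude $\phi\in\aut^T(G)$. Hence every compatible $A$ satisfies $A\le\aut^T(G)$.

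For the substantive half I must equip $\aut^T(G)$ with a compatible action, i.e.\ build a homomorphism $\rho\colon\aut^T(G)\to\aut(T)$ extending the $G$-action along $\theta$. By definition of $\aut^T(G)$, for each $\phi$ there is at least one equivariant isometry $f_\phi\colon T\to T$, meaning $f_\phi(gx)=\phi(g)f_\phi(x)$. Any two such isometries differ by an element of the centraliser $Z=\{h\in\aut(T):hgx=ghx\text{ for all }g\in G,\,x\in T\}$ of the $G$-action, since the equivariant isometries form a coset of $Z$. The crucial claim is that $Z$ is trivial, and this is exactly where the hypotheses enter: because the action is minimal and $T$ is not a line, $G$ contains two hyperbolic elements with no common end, and any $h\in Z$ preserves each hyperbolic axis and restricts there to a translation commuting with the relevant $g$; comparing the two independent axes forces both translation lengths to vanish, so $h$ fixes these axes pointwise, and minimality (which makes the union of all axes equal to $T$) propagates the fixed set to all of $T$, giving $h=\operatorname{id}$. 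I would record this either as a short separate fact or cite it as standard; the only delicate case is when all axes share a common end $\xi$, but there a nontrivial $h\in Z$ would have to translate every axis uniformly towards $\xi$, which is not an isometry of a tree, so triviality persists.

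Granting $Z=1$, the isometry $f_\phi$ is unique, so I set $\rho(\phi):=f_\phi$. Uniqueness then makes $\rho$ a homomorphism: the composite $f_\phi f_\psi$ satisfies $f_\phi f_\psi(gx)=(\phi\psi)(g)\,f_\phi f_\psi(x)$, so it must equal $f_{\phi\psi}$, and similarly $f_{\operatorname{id}}=\operatorname{id}$. Finally, for $g\in G$ the isometry $\Omega(g)$ is equivariant for the inner automorphism $\theta(g)$, so by uniqueness $\rho(\theta(g))=\Omega(g)$; this simultaneously shows $\inn(G)\le\aut^T(G)$ and that the diagram commutes. Thus $\aut^T(G)$ carries a compatible action, and combined with the first half it is the maximal subgroup with this property. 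The main obstacle is the triviality of the centraliser $Z$: it is the sole place the ``minimal, not a line'' hypothesis is used, and it is precisely what guarantees the well-definedness (hence homomorphism property) of $\rho$.
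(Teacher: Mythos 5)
Your overall strategy is the same as the paper's: both directions reduce to showing that for each $\phi\in\aut^T(G)$ the $\phi$-equivariant isometry is \emph{unique}, and then uniqueness forces the assignment $\phi\mapsto f_\phi$ to be a homomorphism extending $\Omega$. The difference is entirely in how uniqueness is proved, and here your argument has a soft spot. You claim that minimality plus ``$T$ is not a line'' yields two hyperbolic elements of $G$ with no common end; this is false in general. For example, $\BS(1,2)$ acting on its Bass--Serre tree is minimal, the tree is the trivalent tree (certainly not a line), yet the group fixes an end $\xi$, so \emph{every} hyperbolic element has $\xi$ as an end of its axis. So the ``delicate case'' you relegate to a parenthesis is in fact a generic case for the GBS systems this lemma is applied to. Your patch for it is the right idea but the stated justification --- that translating every axis uniformly towards $\xi$ ``is not an isometry of a tree'' --- is not a proof: such a map is a perfectly good candidate isometry a priori, and what actually kills it is that it would move every point of $T$ by the same positive amount, so its min-set would be all of $T$, forcing $T$ to be the axis of a hyperbolic isometry, i.e.\ a line. (Your argument also tacitly assumes $G$ contains hyperbolic elements and that $T$ is the union of their axes; both hold here but need minimality plus finite generation, and neither is needed for the paper's route.)

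The paper's uniqueness argument sidesteps all of this: if $f,g$ are both $\phi$-equivariant then $h=f^{-1}g$ is $\operatorname{id}$-equivariant, i.e.\ commutes with the $G$-action, so its min-set $T^{h}$ is $G$-invariant by \Cref{cor:MinsetAction} (applied with $\phi=\operatorname{id}$, so $\fix(\operatorname{id})=G$). Minimality gives $T^{h}=T$; since $T$ is not a line, $T^h$ cannot be an axis, so it is the fixed-point set of $h$ and $h=\operatorname{id}$. I would replace your centraliser computation with this argument --- it is shorter, uses no facts about hyperbolic elements of $G$, and is exactly the statement your two cases are trying to reprove by hand. The remaining parts of your write-up (the easy inclusion via \Cref{lem:Compatibility}, the homomorphism property from uniqueness, and the check that $\rho(\theta(g))=\Omega(g)$) are correct and match the paper.
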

\begin{proof}
    Write $A$ for an arbitrary subgroup of $\aut(G)$ making the diagram commute.

    Suppose that $\phi \in A$. Then, by unwinding the definition and applying Lemma \ref{lem:Compatibility}, one immediately sees that $\phi \cdot$ (that is the isometry of $T$ coming from the action of $\phi$) is an equivariant isometry from $(T, \Omega)$ to $(T, \Omega \circ \phi)$.

    Conversely, suppose $\phi \in \aut^T(G)$. Then we show there is in fact a unique $f: T \rightarrow T$ an equivariant isometry from $(T, \Omega)$ to $(T, \Omega \circ \phi)$ (henceforth we will call such an $f$ \emph{$\phi$-equivariant} for brevity). To prove this claim, consider $g$ a distinct $\phi$-equivariant isometry, and note that $f^{-1}g$ is $id$-equivariant. Since $f^{-1}g$ is an automorphism of $T$ it leaves a subtree $T^{f^{-1}g}$ invariant, which is either its axis or the set of points it fixes. It follows by Corollary \ref{cor:MinsetAction} that the action of $G$ restricts to $T^{f^{-1}g}$ (since $\fix(id) = G$). By the assumption that the $T$ is a minimal $G$-tree, we see that $T^{f^{-1}g} = T$. It follows that $f^{-1}g$ doesn't have an axis, since $T$ is not a line, and so instead it fixes all of $T = T^{f^{-1}g}$. Hence, $f = g$.
    
    It is not hard to check that the composition of a $\phi$-equivariant and $\psi$-equivariant isometry is $\phi\psi$-equivariant, and likewise the inverse of a $\phi$-equivariant isometry is $\phi^{-1}$-equivariant. So we may build a compatible action of $\aut^T(G)$ on $T$ by sending each $\phi$ to the unique $\phi$-equivariant isometry.
\end{proof}

As compatibility defines $\aut^T(G)$, we will study fixed subgroups of automorphisms $\phi \in \aut^T(G)$ by letting $\phi$ act on $T$, and considering the action of $\fix(\phi) \curvearrowright T^\phi$. We are especially interested in this action when $\phi$ fixes a point on $T$, so $T^\phi$ is the set of points fixed by $\phi$.

\begin{lemma}
\label{lem:nothingFixes}
Suppose $\phi\in\aut^T(G)$ fixes a point $x\in T$. Then $\phi$ restricts to an automorphism on $G_x$.
\end{lemma}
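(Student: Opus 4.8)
The plan is to read the claim off directly from the compatibility relation in Lemma \ref{lem:Compatibility}. First I would fix the hypothesis as $\phi \cdot x = x$, where $\phi \cdot$ denotes the isometry of $T$ coming from the compatible action of $\aut^T(G)$. The goal is to show $\phi(G_x) = G_x$ as subsets of $G$; injectivity of the restriction is then automatic since $\phi$ is injective on all of $G$, so once the set equality is established, $\phi|_{G_x}$ is a bijection and hence an automorphism of $G_x$.

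For the inclusion $\phi(G_x) \subseteq G_x$, I would take $g \in G_x$, so that $gx = x$, and apply Lemma \ref{lem:Compatibility} with the point $x$ in place of the general point: this gives $\phi \cdot (gx) = \phi(g)(\phi \cdot x)$. The left-hand side equals $\phi \cdot x = x$ because $gx = x$, and on the right-hand side $\phi \cdot x = x$ by hypothesis, so we obtain $\phi(g) x = x$, i.e. $\phi(g) \in G_x$.

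To upgrade this inclusion to an equality, I would run the identical argument for $\phi^{-1}$. Since $\aut^T(G)$ is a group we have $\phi^{-1} \in \aut^T(G)$, and since the compatible action is a genuine action, $\phi^{-1} \cdot x = \phi^{-1} \cdot (\phi \cdot x) = x$, so $\phi^{-1}$ also fixes $x$. The previous paragraph then yields $\phi^{-1}(G_x) \subseteq G_x$, which is equivalent to $G_x \subseteq \phi(G_x)$. Combining the two inclusions gives $\phi(G_x) = G_x$, and hence $\phi$ restricts to an automorphism of $G_x$.

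I do not expect any real obstacle here: the statement is essentially a one-line consequence of the compatibility identity in Lemma \ref{lem:Compatibility}. The only point requiring a moment's care is surjectivity of the restriction, which is precisely why I would argue with $\phi^{-1}$ as well rather than with $\phi$ alone; noting that $\phi^{-1}$ inherits the property of fixing $x$ is the small verification that makes this symmetric argument go through.
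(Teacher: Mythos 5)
Your argument is correct and is essentially identical to the paper's own proof: both derive $\phi(G_x)\subseteq G_x$ from the compatibility identity of Lemma \ref{lem:Compatibility}, then repeat the argument for $\phi^{-1}$ (which also fixes $x$) to obtain surjectivity of the restriction. No gaps.
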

\begin{proof}
Suppose $\phi$ fixes $x$, i.e. $\phi\cdot x=x$, and suppose $g\in G_x$.
Then we have
\[
x=\phi \cdot x=\phi \cdot gx=\phi(g)(\phi \cdot x)=\phi(g)x
\]
and so $\phi(g)\in G_x$.
Hence, $\phi$ acts as an endomorphism on $G_x$.
As $\phi\cdot x=x$ we also have $\phi^{-1}\cdot x=x$, and by an identical argument to the above we have that $\phi^{-1}$ also acts as an endomorphism on $G_x$. Hence, $\phi|_{G_x}$ has an inverse and so is an automorphism as required.
\end{proof}

\p{The sign of an automorphism}
In light of Lemma \ref{lem:nothingFixes}, the stabiliser of a point $x \in T^\phi$ under the action of $\fix(\phi)$ is $\fix(\phi) \cap G_x = \fix(\phi|_{G_x})$. It turns out that the behaviour of fixed subgroups will depend on how $\phi$ acts on $G_x$ in such cases.

More precisely,
    given $(G,T)$ a GBS system, we define a map $\sgn: \aut^T(G) \rightarrow \{\pm 1\}$, which we call the \emph{sign} of a compatible automorphism, as follows:
    if $\phi \in \aut^T(G)$ fixes a point in $T$, we say $\sgn(\phi) = 1$ if there is $x \in T^\phi$ such that $\phi$ restricts to the identity on $G_x$, and $\sgn(\phi) = -1$ otherwise;
    if $\phi$ does not fix a point on $T$, we say $\sgn(\phi)$ is undefined.

Notice that ``some'' in the previous definition could have been replaced by ``any'', since all stabilisers in a GBS tree are commensurable copies of $\mathbb{Z}$. Thus, if $\phi$ is not the identity on some $G_x$, then it acts by inversion on every $G_x$ (i.e. by the only non-trivial automorphism of $\mathbb{Z}$).

The final preliminary lemma uses $\sgn(\phi)$.
This lemma is applied in combination with Proposition \ref{prop:bassFg} to exhibit fixed subgroups which are not finitely generated.

\begin{lemma}\label{lem:sign1minimal}
    Suppose $(G,T)$ is a 1-free GBS system and $\phi \in \aut^T(G)$ has sign $1$. Then the action $\fix(\phi) \curvearrowright T^\phi$ is minimal.
\end{lemma}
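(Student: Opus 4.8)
The plan is to show directly that $T^\phi$ admits no proper, non-empty, $\fix(\phi)$-invariant subtree. First note that since $\sgn(\phi)$ is defined, $\phi$ fixes a point, so $\ell(\phi)=0$ and $T^\phi$ is exactly the fixed-point set of the isometry $\phi$ of $T$; this is a genuine subtree of $T$, and $\fix(\phi)$ acts on it by \Cref{cor:MinsetAction}. The only extra input I expect to need is a description of the vertex and edge stabilisers of this induced action.

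The crucial consequence of $\sgn(\phi)=1$ is that $G_v\leq\fix(\phi)$ for \emph{every} vertex $v\in T^\phi$: by \Cref{lem:nothingFixes} the automorphism $\phi$ restricts to an automorphism of each such $G_v$, and by the definition of $\sgn$ together with the remark following it, $\sgn(\phi)=1$ forces $\phi|_{G_v}=\mathrm{id}$ for all $v\in T^\phi$ (otherwise $\phi$ would invert every vertex stabiliser, contradicting the existence of a stabiliser it fixes pointwise). In particular, for an edge $e=[v,w]$ of $T^\phi$, the stabiliser of $e$ in $\fix(\phi)$ is $G_e=G_v\cap G_w$, and $1$-freeness gives the proper inclusion $G_e\lneq G_w$.

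Now suppose, for contradiction, that $S\subsetneq T^\phi$ is a non-empty $\fix(\phi)$-invariant subtree. Since a subtree omitting no vertex of $T^\phi$ is all of $T^\phi$, there is a vertex of $T^\phi$ outside $S$; choosing one at minimal distance from $S$ and using convexity of $S$, I obtain a boundary edge $e=[v,w]$ of $T^\phi$ with $v\in S$ and $w\notin S$. Let $y$ generate $G_w\leq\fix(\phi)$. Since $G_v\cap G_w=G_e\lneq G_w$, the generator $y$ does not fix $v$, so $y\cdot v$ is a neighbour of $w=y\cdot w$ distinct from $v$. Both $v$ and $y\cdot v$ lie in $S$ (as $v\in S$ and $S$ is $\fix(\phi)$-invariant), and in a tree the geodesic between two distinct neighbours of $w$ passes through $w$; convexity of $S$ then forces $w\in S$, a contradiction. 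Hence the action is minimal.

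The one genuinely non-obvious step is the choice of which vertex stabiliser to exploit. The naive attempt---taking $v\in S$ and using $G_v\leq\fix(\phi)$ to move the edge $e$ off itself---only produces further edges whose far endpoints also lie outside $S$, and yields no contradiction. The trick is instead to use the stabiliser of the vertex $w$ lying \emph{outside} $S$: its nontriviality modulo $G_e$ (guaranteed precisely by $1$-freeness) drags a second neighbour of $w$ into the convex set $S$, trapping $w$ between two points of $S$. Everything else---that $T^\phi$ is a subtree, that a proper subtree must omit a vertex, and that the relevant stabilisers sit inside $\fix(\phi)$---is routine given the preliminaries.
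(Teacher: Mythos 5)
Your proof is correct and is essentially the paper's argument: both locate a boundary edge of the invariant subtree, use $1$-freeness to produce an element of the stabiliser of the \emph{outside} endpoint not lying in the edge stabiliser, and use $\sgn(\phi)=1$ to place it in $\fix(\phi)$. The only cosmetic difference is the final contradiction — the paper observes that this element translates the subtree to a disjoint copy, whereas you trap the outside vertex between two points of the convex invariant subtree — but these are two phrasings of the same mechanism.
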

\begin{proof}
    Suppose, seeking a contradiction, that the action of $\fix(\phi)$ on $T'$ is not minimal.
    Then there exists a proper $\fix(\phi)$ invariant subtree $T'' \subset T'$.

    Take vertices $u, v \in T'$, such that $u \in T''$ but $v \notin T''$. Write $\gamma$ for the path from $u$ to $v$ and $e$ for the unique edge of $\gamma$ with $v$ as an endpoint. Since $(G,T)$ is 1-free, we may take $g \in G_v \setminus G_e$. Since $\sgn(\phi) = 1$, $g \in \fix(\phi)$, but $g$ sends $T''$ to a disjoint subtree, contradicting that $T''$ is $\fix(\phi)$ invariant.
\end{proof}

\section{Fixed subgroups of compatible automorphisms}

In this section, given a GBS system $(G,T)$ and an automorphism $\phi \in \aut^T(G)$, we investigate when $\fix(\phi)$ is finitely generated. We will prove Theorems \ref{thmintro:characterisingFg} and \ref{thmintro:allFiniteOrder}.

\subsection{Automorphisms not fixing a point of $T$}
We firstly consider what happens when $\phi$ does not fix any point of $T$.

We start with the following proposition, which is a strengthening of Sykiotis' \cite[Proposition 3.1]{sykiotis2002GraphOfGroups}.

\begin{proposition}
\label{prop:Sykiotis}
Let $G$ be a group acting on a tree $T$, and suppose $\phi\in\aut^T(G)$.
Suppose $\phi$ does not fix any point of $T$.
Then there exists an edge $e\in ET$ such that one of the following holds:
\begin{enumerate}
    \item $\fix(\phi)\leq G_e$.
    \item\label{Sykiotis:2} $\fix(\phi)=(\fix(\phi)\cap G_e)\rtimes\mathbb{Z}$.
\end{enumerate}
\end{proposition}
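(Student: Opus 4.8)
The plan is to exploit the compatible action of $\phi$ on $T$ guaranteed by $\phi \in \aut^T(G)$. Since $\phi$ acts as an isometry of the tree $T$ and does not fix a point, it is hyperbolic, so its minset $T^\phi$ is its axis, a bi-infinite line $L$ on which $\phi$ translates by $\ell(\phi) \geq 1$. By \Cref{cor:MinsetAction}, the action of $G$ restricts to an action $\fix(\phi) \curvearrowright L$, and I would study fixed subgroups entirely through this action on a line. Since $L$ is a genuine line it contains edges, so fix once and for all an edge $e$ of $L$; this is the edge that will appear in both conclusions.

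The heart of the argument, and the step I expect to be the main obstacle to get cleanly, is to show that every $g \in \fix(\phi)$ acts on $L$ by an \emph{orientation-preserving} isometry (i.e.\ a translation), ruling out reflections. The key is that compatibility forces $g$ to commute with $\phi$ as isometries: by \Cref{lem:Compatibility}, $\phi \cdot (gx) = \phi(g)(\phi \cdot x) = g(\phi \cdot x)$ for all $x$ since $\phi(g) = g$, so the isometry $\phi\cdot$ and the isometry $g$ of $T$ commute. Both preserve $L$ (the line $L$ is $\phi$-invariant as its axis, and $\fix(\phi)$-invariant by \Cref{cor:MinsetAction}), so on $L$ the element $g$ centralises the nontrivial translation induced by $\phi$. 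In the isometry group of a line a reflection inverts every translation, so the centraliser of a nontrivial translation contains only translations; hence $g$ acts on $L$ as a translation. This is precisely where the no-fixed-point hypothesis (ensuring $\phi|_L$ is a genuinely nontrivial translation) is used.

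Granting this, the remainder is routine. Recording the signed translation length of each $g \in \fix(\phi)$ along $L$ gives a homomorphism $\fix(\phi) \to \mathbb{Z}$, whose image is cyclic. Its kernel $N$ is the pointwise stabiliser of $L$; I would identify $N$ with $\fix(\phi) \cap G_e$, since an element fixing the edge $e$ fixes two distinct points of $L$ and an isometry of a line fixing two points is the identity on $L$, while conversely anything fixing $L$ pointwise fixes $e$. There are then two cases. If the image is trivial, then $\fix(\phi) = N = \fix(\phi) \cap G_e \leq G_e$, giving conclusion (1). If the image is infinite cyclic, the short exact sequence $1 \to N \to \fix(\phi) \to \mathbb{Z} \to 1$ splits because $\mathbb{Z}$ is free, yielding $\fix(\phi) = (\fix(\phi) \cap G_e) \rtimes \mathbb{Z}$, which is conclusion \eqref{Sykiotis:2}. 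I would close by noting that only the structural input ``$\phi$ is hyperbolic and $\fix(\phi)$ centralises it'' was used, so no GBS-specific or $1$-freeness hypotheses are needed here, consistent with the proposition being stated for an arbitrary $G$-tree.
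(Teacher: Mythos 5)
Your proposal is correct and follows essentially the same route as the paper: restrict the $\fix(\phi)$-action to the axis $T^\phi$, show it acts by translations, and split the translation-length homomorphism onto a subgroup of $\mathbb{Z}$, identifying the kernel with $\fix(\phi)\cap G_e$ for an edge $e$ of the axis. The only (cosmetic) difference is how reflections are excluded: you note that $\fix(\phi)$ centralises the nontrivial translation $\phi|_{T^\phi}$ and a reflection inverts translations, while the paper observes that any $g\in\fix(\phi)$ fixing a point $x$ also fixes the distinct point $\phi\cdot x$ and hence the whole line; both arguments are valid.
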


\begin{proof}
Consider the axis $T^\phi$, on which $\fix(\phi)$ acts by Corollary \ref{cor:MinsetAction}.

If $\fix(\phi)$ fixes $T^\phi$ then, as $T^\phi$ is infinite and so contains an edge, $\fix(\phi)$ fixes an edge $e$ of $T$, and so $\fix(\phi)\leq G_e$ as required.

Suppose therefore that $\fix(\phi)$ does not fix $T^\phi$. We claim that the action is by translations. Indeed take $g \in \fix(\phi)$ fixing a point $x \in T$, then $$\phi \cdot x = \phi \cdot gx = g (\phi \cdot x),$$ so $g$ also fixes a distinct point $\phi \cdot x$ and thus necessarily fixes the whole axis.

Translation length along $T^\phi$ induces a surjection $\psi:\fix(\phi)\twoheadrightarrow\mathbb{Z}$, where $\ker(\psi)$ fixes this path and we have $\ker(\psi) = \fix(\phi)\cap G_{T^\phi}$.
As $\fix(\phi)$ acts by translation on $T^\phi$, we have that $\fix(\phi)\cap G_{T^\phi}=\fix(\phi)\cap G_e$ for all edges $e$ on the path ${T^\phi}$, and so there exists an edge $e\in ET$ such that $\ker(\psi) = \fix(\phi)\cap G_e$.
The result now follows as every map to $\mathbb{Z}$ splits, so $\fix(\psi)=\ker(\psi)\rtimes\mathbb{Z}=(\fix(\phi)\cap G_e)\rtimes\mathbb{Z}$ as required.
\end{proof}

We may now apply the above to GBS groups.
An equivalent formulation of this theorem is that $\fix(\phi)$ embeds into the Klein bottle group $\mathbb{Z}\rtimes\mathbb{Z}$.

\begin{theorem}
\label{thm:phi_hyperbolic}
Let $(G,T)$ be a GBS system, and let $\phi\in\aut^T(G)$.
Suppose $\phi$ does not fix any point of $T$.
Then one of the following holds:
\begin{enumerate}
    \item $\fix(\phi)$ is trivial.
    \item $\fix(\phi)$ is infinite cyclic.
    \item $\fix(\phi)\cong \mathbb{Z}\times\mathbb{Z}$
    \item $\fix(\phi)$ is isomorphic to the Klein bottle group (i.e. to the non-trivial semidirect product $\mathbb{Z}\rtimes\mathbb{Z}$).
\end{enumerate}
In particular, $\rk(\fix(\phi))\leq2$.
\end{theorem}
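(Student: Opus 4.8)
The plan is to apply Proposition~\ref{prop:Sykiotis} directly and then analyse the two resulting cases using the fact that, in a GBS system, every edge stabiliser $G_e$ is infinite cyclic. Since $\phi$ does not fix a point of $T$, Proposition~\ref{prop:Sykiotis} gives an edge $e \in ET$ such that either (1) $\fix(\phi) \leq G_e$, or (2) $\fix(\phi) = (\fix(\phi) \cap G_e) \rtimes \mathbb{Z}$. In case (1), $\fix(\phi)$ is a subgroup of $G_e \cong \mathbb{Z}$, hence is either trivial or infinite cyclic, covering outcomes (1) and (2) of the theorem.

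For case (2), I would first note that $\fix(\phi) \cap G_e$ is a subgroup of $G_e \cong \mathbb{Z}$, so it too is either trivial or infinite cyclic. If $\fix(\phi) \cap G_e$ is trivial, then $\fix(\phi) \cong \{1\} \rtimes \mathbb{Z} = \mathbb{Z}$, again giving outcome (2). If $\fix(\phi) \cap G_e \cong \mathbb{Z}$, then $\fix(\phi)$ is a semidirect product $\mathbb{Z} \rtimes \mathbb{Z}$, where the second factor acts on the first by some automorphism of $\mathbb{Z}$, i.e. by multiplication by $\pm 1$. If the action is trivial we get $\mathbb{Z} \times \mathbb{Z}$ (outcome (3)); if the action is by inversion we get the non-trivial semidirect product, which is the Klein bottle group (outcome (4)).

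The ``hard'' part is really just the bookkeeping of confirming that the only automorphisms of $\mathbb{Z}$ are $\pm 1$, so that the extension in case (2) can only produce $\mathbb{Z}^2$ or the Klein bottle group — there is no genuine obstacle here, as the structural work is carried by Proposition~\ref{prop:Sykiotis}. I would close by observing that in every one of the four outcomes the group is generated by at most two elements (the trivial group, $\mathbb{Z}$, $\mathbb{Z}^2$, and the Klein bottle group each have rank at most $2$), which gives the final assertion $\rk(\fix(\phi)) \leq 2$.
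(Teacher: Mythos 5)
Your proposal is correct and follows exactly the paper's route: the paper's own proof simply observes that every edge stabiliser in a GBS tree is infinite cyclic and then invokes Proposition~\ref{prop:Sykiotis}, leaving implicit the same case analysis (subgroup of $\mathbb{Z}$, versus $\mathbb{Z}\rtimes\mathbb{Z}$ with the action determined by $\aut(\mathbb{Z})=\{\pm 1\}$) that you spell out. Your write-up is just a more explicit version of the same argument.
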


\begin{proof}
    As each edge stabiliser $G_e$, $e\in ET$, embeds into a vertex stabliser $G_v$, $v=\iota(e)$, and so is infinite cyclic, the result follows from \Cref{prop:Sykiotis}.
\end{proof}

\Cref{prop:Sykiotis} is completely general, and indeed the proof of \Cref{thm:phi_hyperbolic} only applies that all edge groups $G_e$ are infinite cyclic; this is the case for GBS groups, but also for example in JSJ decompositions for torsion-free hyperbolic groups.
If we additionally assume that all edge groups are either both infinite cyclic and central, or are trivial, which for example allows free products, then the action in the semidirect product of \Cref{prop:Sykiotis}.\ref{Sykiotis:2} must be trivial and so $\fix(\phi)$ is either $\mathbb{Z}$ or $\mathbb{Z}^2$.

\subsection{Automorphisms with sign $-1$}
Throughout the remainder of this section, let $(G,T)$ be an arbitrary GBS system, and $\phi \in \aut^T(G)$. 

We now deal with the case where $\sgn(\phi)=-1$, which also leads to well-behaved fixed point subgroups. We begin by realising $\fix(\phi)$ as the fundamental group of a graph, and hence free. We then give a finite bound for the rank of $\fix(\phi)$ in terms of the graph $T/G$.

\begin{lemma}\label{lem:negSgnFree}
Suppose $\sgn(\phi) = -1$.
Then $\fix(\phi)$ is isomorphic to $\pi_1(T^{\phi}/\fix(\phi))$.
\end{lemma}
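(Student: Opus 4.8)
The plan is to show that $\fix(\phi)$ acts \emph{freely} on the subtree $T^\phi$, and then to read off the isomorphism directly from Bass--Serre theory. Since $\sgn(\phi) = -1$, the automorphism $\phi$ fixes a point of $T$, so $\ell(\phi) = 0$ and $T^\phi$ is exactly the (non-empty) fixed-point set of $\phi$, which is a subtree of $T$. By Corollary \ref{cor:MinsetAction} the action of $G$ restricts to an action $\fix(\phi) \curvearrowright T^\phi$, so it suffices to understand the stabilisers of this restricted action.

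The key step is the stabiliser computation. For a vertex $x \in T^\phi$, its stabiliser in $\fix(\phi)$ is $\fix(\phi) \cap G_x = \fix(\phi|_{G_x})$, where Lemma \ref{lem:nothingFixes} guarantees that $\phi$ restricts to an automorphism of $G_x$. Because $\sgn(\phi) = -1$, this restriction is never the identity, so (as observed just after the definition of $\sgn$) it must be the inversion automorphism $g \mapsto g^{-1}$ of $G_x \cong \mathbb{Z}$, whose fixed subgroup is trivial. The same conclusion holds for edges: an edge $e \subseteq T^\phi$ has its endpoints in $T^\phi$, and $G_e$ is a subgroup of a vertex group $G_{\iota(e)}$ on which $\phi$ inverts, hence $\phi$ inverts on $G_e$ as well and $\fix(\phi) \cap G_e$ is trivial. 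Thus every vertex and edge stabiliser of $\fix(\phi) \curvearrowright T^\phi$ is trivial, i.e. the action is free (passing to the barycentric subdivision if needed to rule out edge inversions).

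Finally, a group acting freely on a tree is free and is canonically isomorphic to the fundamental group of its quotient graph: the quotient graph of groups $T^\phi /\!/ \fix(\phi)$ has all vertex and edge groups trivial, so its fundamental group coincides with $\pi_1$ of the underlying graph $T^\phi/\fix(\phi)$, and since $T^\phi$ is connected so is the quotient. Bass--Serre theory then identifies $\fix(\phi)$ with this fundamental group, giving $\fix(\phi) \cong \pi_1(T^\phi/\fix(\phi))$ and, in particular, showing $\fix(\phi)$ is free. (The degenerate cases are consistent: if $T^\phi$ is a single point then $\fix(\phi)$ fixes it and acts freely, forcing $\fix(\phi)$ to be trivial, matching $\pi_1$ of a point.)

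The only substantive content is the stabiliser computation in the second paragraph; everything else is a direct appeal to Bass--Serre theory. The one point requiring a little care is the treatment of edge stabilisers and potential edge inversions, which is handled either by the subgroup argument above or by subdividing.
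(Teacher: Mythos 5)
Your proof is correct and follows essentially the same route as the paper: use Corollary \ref{cor:MinsetAction} to restrict the action to $\fix(\phi) \curvearrowright T^\phi$, observe that the sign assumption forces every stabiliser $\fix(\phi)\cap G_x$ to be trivial, and conclude by Bass--Serre theory that $\fix(\phi)\cong\pi_1(T^\phi/\fix(\phi))$. You simply spell out the stabiliser computation (including the edge case) in more detail than the paper's one-line justification.
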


\begin{proof}
By Corollary \ref{cor:MinsetAction}, $\fix(\phi)$ acts on the tree $T^\phi$ with stabilisers $\fix(\phi)\cap G_x$.
By assumption on the sign, each stabiliser is trivial.
The action is therefore free, and the result follows.
\end{proof}

We now prove that the graph $T^\phi / \fix(\phi)$ is finite.

\begin{lemma}\label{lem:negSgnTreeStructure}
Suppose $\sgn(\phi) = -1$.
Then the graph $T^{\phi}/\fix(\phi)$ has at most twice as many vertices and twice as many edges as the graph $T/G$.
\end{lemma}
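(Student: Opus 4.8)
The plan is to bound the fibres of the natural projection $\bar p \colon T^\phi/\fix(\phi) \to T/G$ and show it is at most two-to-one on both vertices and edges. Since $\fix(\phi) \leq G$, two points of $T^\phi$ in the same $\fix(\phi)$-orbit lie in the same $G$-orbit, so the restriction of $p \colon T \to T/G$ to $T^\phi$ descends to a well-defined simplicial map $\bar p$. It then suffices to prove that each vertex of $T/G$ has at most two preimages, and likewise for edges; the bounds $2|V(T/G)|$ and $2|E(T/G)|$ follow at once.

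First I would fix a vertex $\bar v \in V(T/G)$. If no vertex of $T^\phi$ lies over $\bar v$ there is nothing to check, so I may choose a lift $v \in T^\phi$ (possible exactly when the fibre is non-empty), after which the vertices of $T^\phi$ over $\bar v$ are precisely the $gv$ with $g \in G$ and $gv \in T^\phi$. Since $v \in T^\phi$ we have $\phi \cdot v = v$, whence $\phi \cdot (gv) = \phi(g)(\phi\cdot v) = \phi(g)v$, so $gv \in T^\phi$ if and only if $g^{-1}\phi(g) \in G_v$. For each such $g$ I set $c(g) := g^{-1}\phi(g) \in G_v$. The decisive input is that, as $\sgn(\phi) = -1$ and $v \in T^\phi$, Lemma \ref{lem:nothingFixes} makes $\phi$ restrict to an automorphism of $G_v \cong \mathbb{Z}$, and this automorphism is inversion.

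The factor of two will emerge from $|G_v / 2G_v| = |\mathbb{Z}/2\mathbb{Z}| = 2$. Writing $G_v$ additively and using $\phi|_{G_v} = -\mathrm{id}$, three short computations establish that $c \bmod 2G_v$ classifies the orbits: replacing $g$ by $gk$ with $k \in G_v$ (which fixes the vertex $gv$) changes $c(g)$ by $-2k$, so $c(g) \bmod 2G_v$ depends only on $gv$; replacing $g$ by $hg$ with $h \in \fix(\phi)$ leaves $c(g)$ unchanged, so $c \bmod 2G_v$ is constant on $\fix(\phi)$-orbits; and conversely $g_1 v$ and $g_2 v$ lie in one $\fix(\phi)$-orbit if and only if $c(g_2) - c(g_1) \in 2G_v$, since requiring $g_2 k g_1^{-1} \in \fix(\phi)$ for some $k \in G_v$ reduces, via inversion and commutativity of $G_v$, to the solvable-in-$k$ equation $c(g_2) - c(g_1) = 2k$. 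Hence $c \bmod 2G_v$ injects the set of $\fix(\phi)$-orbits over $\bar v$ into $G_v/2G_v \cong \mathbb{Z}/2\mathbb{Z}$, giving at most two preimages. The edge case is identical, using $G_e \cong \mathbb{Z}$ with $\phi$ acting by inversion, after subdividing if needed so that each edge of $T^\phi$ is fixed pointwise rather than inverted.

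The main obstacle is the simultaneous bookkeeping for the cocycle $c(g) = g^{-1}\phi(g)$: one must check that $c \bmod 2G_v$ descends to vertices of $T^\phi$ over $\bar v$, is constant on $\fix(\phi)$-orbits, and separates distinct orbits, each of which rests on $\phi$ acting as inversion on $G_v$ together with the commutativity of $G_v \cong \mathbb{Z}$. A secondary, minor point is the inversion subtlety for edges, dealt with by the standard convention of working with actions without inversions.
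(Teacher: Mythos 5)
Your proof is correct and follows essentially the same route as the paper: both arguments hinge on the cocycle $g^{-1}\phi(g)\in G_v$ and the relation $c(gk)=c(g)-2k$ coming from $\phi$ inverting $G_v$, so that the class of $c(g)$ modulo $2G_v\cong\mathbb{Z}/2\mathbb{Z}$ determines the $\fix(\phi)$-orbit. The paper packages this by normalising the representative so that $g^{-1}\phi(g)\in\{1,a\}$, whereas you phrase it as a well-defined injective invariant on orbits; this is only a cosmetic difference.
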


\begin{proof}
Take a point $x \in T^\phi$, and denote a generator of $G_x$ by $a$.
Now take $gx \in T^\phi$ in the $G$ orbit of $x$ and assume it is not in the $\fix(\phi)$ orbit of $x$. Then,
\[
gx = \phi \cdot gx = \phi(g)(\phi\cdot x) = \phi(g)x,
\]
where the first and last equalities are by assumption that $x, gx \in T^\phi$ and the second is by \Cref{lem:Compatibility}. In particular, $g^{-1} \phi(g) \in G_x$. We write $a$ for the generator of $G_x$, and so $g^{-1}\phi(g) = a^k$ for some $k\in\mathbb{Z}$. 

In fact, we can find $g'$ such that $g'x = gx$ and $g'^{-1}\phi(g') = a$. To see this, suppose $g^{-1}\phi(g) = a^{2n+r}$, where $r \in \{0,1\}$. Then set $g' = ga^n$. Clearly $g'x = gx$, and
\[
g'^{-1}\phi(g') = (ga^n)^{-1}\phi(ga^n) = a^{-n}g^{-1}\phi(g)a^{-n} = a^r,
\]
where the second equality uses the fact that $\phi(a) = a^{-1}$ since $\sgn(\phi) = -1$. It must be that $r = 1$, or we have $g'^{-1}\phi(g') = 1$, that is $g' \in \fix(\phi)$, contradicting the fact that $x$ and $g'x$ are in different $\fix(\phi)$ orbits. From here we will rename $g'$ to $g$ so $g^{-1}\phi(g) = a$.

Now take two points $gx, hx \in T^\phi$ in the $G$ orbit of $x$ but not the $\fix(\phi)$ orbit of $x$. By the reasoning above, we may assume $g^{-1}\phi(g) = a = h^{-1}\phi(h)$, and so $hg^{-1} \in \fix(\phi)$. This means that $gx$ and $hx$ are in the same $\fix(\phi)$ orbit. Therefore there are at most two $\fix(\phi)$ orbits of points in $T^\phi$ for each $G$ orbit of point in $T$, so $T^\phi / \fix(\phi)$ is finite.
\end{proof}

We therefore have the following, using the fact that if $\Gamma$ is a graph, then $\rk(\pi_1(\Gamma)) = 1-|V\Gamma|+|E\Gamma|$.

\begin{proposition}\label{prop:negSgnFg}
If $\sgn(\phi) = -1$, then $\fix(\phi)$ is a finitely generated free group, of rank at most $2|E(T/G)|$.
\end{proposition}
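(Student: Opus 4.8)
The plan is to assemble the two preceding lemmas together with the Euler-characteristic formula for the rank of a graph group. By Lemma \ref{lem:negSgnFree}, the sign $-1$ hypothesis already gives $\fix(\phi) \cong \pi_1(T^\phi/\fix(\phi))$, and since the fundamental group of any graph is free, this immediately settles freeness; it remains only to pin down the rank.

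First I would set $\Gamma := T^\phi/\fix(\phi)$ and record that $\Gamma$ is connected: since $\sgn(\phi) = -1$ is defined, $\phi$ fixes a point, so $T^\phi$ is the (nonempty) fixed-point subtree of $\phi$, hence connected, and a quotient of a connected space is connected. Thus the Betti number formula $\rk(\pi_1(\Gamma)) = 1 - |V\Gamma| + |E\Gamma|$ applies. Next, Lemma \ref{lem:negSgnTreeStructure} guarantees that $\Gamma$ is finite with $|E\Gamma| \leq 2|E(T/G)|$ (and $|V\Gamma| \leq 2|V(T/G)|$), so in particular $\fix(\phi)$ has finite rank and is therefore finitely generated.

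Finally I would combine these: since $\Gamma$ is a nonempty graph we have $|V\Gamma| \geq 1$, whence $1 - |V\Gamma| \leq 0$ and
\[
\rk(\fix(\phi)) = 1 - |V\Gamma| + |E\Gamma| \leq |E\Gamma| \leq 2|E(T/G)|,
\]
as claimed.

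The step I expect to be the only real subtlety --- though it is mild --- is this last inequality. The naive bound coming directly from the two lemmas is $1 - |V\Gamma| + |E\Gamma|$, with both $|V\Gamma|$ and $|E\Gamma|$ potentially as large as twice their $T/G$ counterparts; the point is that the vertex term is \emph{subtracted}, so it only helps and may be discarded to leave a clean bound purely in terms of edges. Everything else is a direct invocation of Lemmas \ref{lem:negSgnFree} and \ref{lem:negSgnTreeStructure}.
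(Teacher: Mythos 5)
Your proposal is correct and follows exactly the paper's argument: invoke Lemma \ref{lem:negSgnFree} for freeness, Lemma \ref{lem:negSgnTreeStructure} for finiteness of the quotient graph, and then discard the nonpositive term $1-|V\Gamma|$ in the Euler-characteristic formula to get the edge bound. The added remarks on connectedness of $T^\phi/\fix(\phi)$ are a harmless elaboration of what the paper leaves implicit.
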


\begin{proof}
Write $\Gamma = T/G$. By \Cref{lem:negSgnFree,lem:negSgnTreeStructure}, $\fix(\phi)\cong \pi_1(\Gamma')$ for some graph $\Gamma'$ such that $|V\Gamma'|\leq2|V\Gamma|$ and $|E\Gamma'|\leq2|E\Gamma|$.
We therefore have the following:
\begin{align*}
    \rk(\fix(\phi))
    &=1-|V\Gamma'|+|E\Gamma'|\\
    &\leq |E\Gamma'|\\
    &\leq 2|E\Gamma|
\end{align*}
The result follows immediately.
\end{proof}

\subsection{Automorphisms with sign $1$} In this final subsection, we study automorphisms with sign $1$. In contrast to the other two cases, non-finitely generated fixed subgroups can occur. We find some sufficient conditions for fixed subgroups to be finitely generated.

As before, our main tool is Corollary \ref{cor:MinsetAction}.

\begin{lemma}\label{lem:orbitsInTphi}
    Suppose $\sgn(\phi) = 1$. Let $e_1 = \{v, u_1\}$ and $e_2 = \{v, u_2\}$ be two edges of $T$, which share an endpoint and are in the same $G$-orbit. Suppose that $e_1 \in E(T^\phi)$. Then $e_2 \in E(T^\phi)$, and they are in the same $\fix(\phi)$-orbit.
\end{lemma}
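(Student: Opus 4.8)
The plan is to reduce the statement to the single observation that the full stabiliser $G_v$ of the shared vertex lies inside $\fix(\phi)$, so that any element of $G_v$ is available as a symmetry of the fixed tree $T^\phi$.

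First I would record that $G_v \le \fix(\phi)$. Since $e_1 \in E(T^\phi)$, its endpoint $v$ lies in $T^\phi$, so $\phi$ fixes $v$. By \Cref{lem:nothingFixes}, $\phi$ restricts to an automorphism of $G_v$; because $\sgn(\phi) = 1$, the remark following the definition of the sign forces this restriction to be the identity (otherwise $\phi$ would invert every vertex stabiliser, contradicting $\sgn(\phi) = 1$). Hence every element of $G_v$ is fixed by $\phi$.

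Next I would produce an element of $G_v$ carrying $e_1$ to $e_2$. As $e_1$ and $e_2$ lie in one $G$-orbit, choose $g \in G$ with $g e_1 = e_2$. Since $v \in e_2 = g e_1$, we get $g^{-1} v \in e_1 = \{v, u_1\}$, so either $g \in G_v$, or else $g u_1 = v$ and $g v = u_2$. I would argue that the hypothesis that $e_1$ and $e_2$ \emph{share the endpoint $v$} lets us discard the second possibility and take $g \in G_v$: reading $e_1$ and $e_2$ as edges oriented away from $v$, an orbit equivalence between them must send origin to origin, and hence fix $v$. Granting this, $g \in G_v \le \fix(\phi)$.

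Finally, \Cref{cor:MinsetAction} tells us that $\fix(\phi)$ preserves $T^\phi$, so applying $g \in \fix(\phi)$ to $e_1 \in E(T^\phi)$ yields $e_2 = g e_1 \in E(T^\phi)$, and at the same time exhibits $e_1$ and $e_2$ in a common $\fix(\phi)$-orbit, as required. The main obstacle is exactly the case distinction above: the ``flipping'' element, with $g u_1 = v$, arises precisely when $v$ and $u_1$ are $G$-equivalent, that is when $e_1$ projects to a loop in $T/G$, and for such an element the desired conclusion can genuinely fail (a twist automorphism of $\bs(n,m)$, fixing the vertex group and sending the stable letter $g$ to $gb^{j}$ with $n \nmid j$, provides an example). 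Dealing with this correctly is what the ``shared endpoint'' hypothesis is for: the reverse orientation of a loop represents a different orbit of oriented edges, so once the two edges are oriented away from $v$ no flip relates them and we may always take $g \in G_v$.
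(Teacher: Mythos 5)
Your proof is correct and follows the same route as the paper's: the entire content is that $G_v \le \fix(\phi)$ when $\sgn(\phi)=1$, together with an element of $G_v$ carrying $e_1$ to $e_2$, after which \Cref{cor:MinsetAction} finishes the argument. The paper's proof asserts the existence of such a $g \in G_v$ in a single line, so your discussion of the ``flipping'' case addresses a genuine subtlety it elides, and you are right that the hypothesis must be read as an orbit equivalence of \emph{oriented} edges (origin sent to origin, hence fixing $v$) for that assertion to hold --- which is consistent with how the lemma is invoked in the proof of \Cref{lem:tphiInheretsLabels}.
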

\begin{proof}
    Since $e_1$ and $e_2$ share an endpoint, there is $g \in G_v$ such that $ge_1 = e_2$. Now, we notice that $G_v \leq \fix(\phi)$, by assumption that $\sgn(\phi) = 1$. The conclusion follows.
\end{proof}

\begin{lemma}\label{lem:tphiInheretsLabels}
    Suppose $\sgn(\phi) = 1$. Then the injection $T^{\phi}\hookrightarrow T$ induces an immersion of labelled graphs $i:T^{\phi}//\fix(T^{\phi})\looparrowright T//G$.
\end{lemma}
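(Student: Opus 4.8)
The plan is to construct $i$ from the $\fix(\phi)$-equivariant inclusion $T^{\phi}\hookrightarrow T$, verify that it carries the graph-of-groups (i.e.\ labelled-graph) structure across, and then recognise local injectivity as essentially a restatement of Lemma~\ref{lem:orbitsInTphi}.

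First I would set up the map. Since $\sgn(\phi)=1$, the automorphism $\phi$ fixes a point, so $T^{\phi}$ is the subtree of points fixed by $\phi$, on which $\fix(\phi)$ acts by Corollary~\ref{cor:MinsetAction}. The inclusion $T^{\phi}\hookrightarrow T$ is equivariant for $\fix(\phi)\le G$, hence descends to a graph morphism sending the $\fix(\phi)$-orbit of a cell $x\in T^{\phi}$ to its $G$-orbit; this is the underlying graph map of $i$.

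Next I would transfer the labels, which is where the sign hypothesis does its work. By the remark following the definition of $\sgn$, $\phi$ restricts to the identity on $G_v$ for every vertex $v\in T^{\phi}$, and since each incident edge group is generated by a power of a generator of the vertex group, $\phi$ is the identity on every $G_e$ as well. Thus $G_x\le\fix(\phi)$ and $\stab_{\fix(\phi)}(x)=\fix(\phi)\cap G_x=G_x$ for every cell $x$ of $T^{\phi}$. Consequently the vertex and edge groups of the quotient graph of groups $T^{\phi}//\fix(\phi)$, together with their inclusion maps, are literally the same subgroups of $G$ and the same inclusions as in $T//G$; choosing generators compatibly, the integer labels agree, so $i$ is a morphism of labelled graphs.

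Finally I would check that $i$ is an immersion, i.e.\ locally injective at each vertex $\bar v$. Lifting $\bar v$ to $v\in T^{\phi}$, the edges at $\bar v$ correspond to $\stab_{\fix(\phi)}(v)=G_v$-orbits of edges of $T^{\phi}$ incident to $v$, and the edges at $i(\bar v)$ to $G_v$-orbits of edges of $T$ incident to $v$. If two edges $e_1,e_2$ of $T^{\phi}$ at $v$ have the same image, there is $g\in G_v$ with $ge_1=e_2$; as $g\in G_v\le\fix(\phi)$ fixes $v$, the edges lie in a single $\stab_{\fix(\phi)}(v)$-orbit, so the star map is injective. This last step is exactly the content of Lemma~\ref{lem:orbitsInTphi}. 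The only genuinely delicate point is the label transfer in the third paragraph: it relies essentially on $\sgn(\phi)=1$ forcing the $\fix(\phi)$-stabilisers to be the full $G$-stabilisers (for $\sgn(\phi)=-1$ they would be trivial and the comparison would break down); once that is in place, the immersion property is immediate from the earlier lemma.
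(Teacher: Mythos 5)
Your proof is correct and follows the paper's argument in all essentials: local injectivity is deduced from Lemma \ref{lem:orbitsInTphi} exactly as in the paper, and label preservation comes from the observation that $\sgn(\phi)=1$ forces $G_x\le\fix(\phi)$ for every cell $x$ of $T^\phi$, so the $\fix(\phi)$-stabilisers are the full $G$-stabilisers. The only cosmetic difference is that you compare the indices $[G_v:G_e]$ of these (identical) stabilisers directly, whereas the paper counts the edges of the $G$-orbit of $e$ incident to $v$ and checks that the count is unchanged when passing to $T^\phi$ --- two phrasings of the same fact.
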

\begin{proof} 
    The inclusion of subtrees induces a local injection $T^\phi//\fix(T^\phi) \looparrowright T//G$ since edges of $T^\phi$ sharing an endpoint in distinct $\fix(T^\phi)$-orbits are not in the same $G$-orbit by Lemma \ref{lem:orbitsInTphi}.
    
    The other point is that the immersion $i: T^\phi // \fix(T^\phi) \looparrowright T//G$ respects the labels. To see this, take $v \in T^\phi$, and $e_i = \{v, u_i\}$ for $i \in \{1 \dots n\}$ to be $n$ distinct edges in $T$ in the same $G$-orbit. Suppose also there are no other edges in this orbit with $v$ as an endpoint (so in $T//G$, the label on the image of $e_i$ near the image of $v$ is $n$). Suppose that $e_1 \subseteq T^\phi$. Then, by Lemma \ref{lem:orbitsInTphi}, for $i \in \{2 \dots n\}$, $e_i \in T^\phi$ and there is $g_i \in \fix(T^\phi)$ such that $g_ie_1 = e_i$. It can't be the case that $\fix(T^\phi) \curvearrowright T^\phi$ has new edges in this orbit with $v$ as an endpoint (as $\fix(T^\phi) \leq G$ and $T^\phi \subseteq T$), so we have shown the label in $T^\phi//\fix(T^\phi)$ on the edge which is the image of $e_1$ near the image of $v$ is exactly $n$, completing the proof.
\end{proof}

\begin{theorem}
\label{thm:contractible}
If $\sgn(\phi) = 1$ and $p(T^\phi)$ is a tree, then $\fix(\phi)$ is finitely generated, of rank at most $|Vp(T^\phi)|$.
\end{theorem}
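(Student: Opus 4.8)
The plan is to study the action $\fix(\phi)\curvearrowright T^\phi$ provided by \Cref{cor:MinsetAction} and to show that its quotient is a finite tree of infinite cyclic groups, so that finite generation and the rank bound follow from Bass's criterion (\Cref{prop:bassFg}). Since $\phi$ fixes a point, $T^\phi$ is the connected fixed-point subtree of the elliptic isometry $\phi$. As $\sgn(\phi)=1$, the automorphism $\phi$ restricts to the identity on every vertex stabiliser $G_v$ with $v\in T^\phi$, so the vertex stabilisers of the action $\fix(\phi)\curvearrowright T^\phi$ are exactly $\fix(\phi)\cap G_v=G_v\cong\mathbb{Z}$; in particular they are finitely generated.

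First I would set $\Gamma':=T^\phi/\fix(\phi)$ and invoke \Cref{lem:tphiInheretsLabels} to obtain the immersion $i\colon\Gamma'\looparrowright T/G$, whose image is precisely $p(T^\phi)$. The next step, and the crux of the argument, is to use the hypothesis that $p(T^\phi)$ is a tree to show that $i$ is in fact an isomorphism onto $p(T^\phi)$. Since $\Gamma'$ is connected (being a quotient of the connected subtree $T^\phi$) and immerses into a tree, it can contain no reduced cycle, as such a cycle would map to a non-trivial reduced cycle in a tree; hence $\Gamma'$ is itself a tree. Then, if two vertices of $\Gamma'$ had the same image under $i$, the geodesic joining them in the tree $\Gamma'$ would be a reduced path mapping (immersions preserve reducedness) to a reduced closed path in the tree $p(T^\phi)$, forcing it to be trivial; hence $i$ is injective on vertices, and therefore an isomorphism of graphs $\Gamma'\cong p(T^\phi)$.

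Since $p(T^\phi)$ is a finite subgraph of the finite graph $T/G$, this shows $\Gamma'$ is a finite tree with $|V\Gamma'|=|Vp(T^\phi)|$. Applying \Cref{prop:bassFg}(1) to the action $\fix(\phi)\curvearrowright T^\phi$, whose quotient is finite and whose vertex stabilisers are infinite cyclic, gives that $\fix(\phi)$ is finitely generated. For the rank bound I would note that $\fix(\phi)$ is the fundamental group of the graph of groups carried by $\Gamma'$, whose underlying graph is a tree; such a fundamental group is the iterated amalgam of its vertex groups and so is generated by one element per vertex. Therefore $\rk(\fix(\phi))\leq|V\Gamma'|=|Vp(T^\phi)|$.

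The main obstacle is the middle step: ruling out several $\fix(\phi)$-orbits lying over a single $G$-orbit of vertices. In the general $\sgn(\phi)=1$ situation this can fail badly, and is the source of the non-finitely generated examples, so it is exactly the assumption that $p(T^\phi)$ is a tree that prevents it, via the reduced-path argument above.
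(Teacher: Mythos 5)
Your proof is correct and follows essentially the same route as the paper's: both use the immersion of \Cref{lem:tphiInheretsLabels}, observe that a local injection of the connected graph $T^\phi/\fix(\phi)$ into the tree $p(T^\phi)$ is an embedding, and then apply \Cref{prop:bassFg} to the resulting finite tree of infinite cyclic groups. Your write-up merely makes explicit the reduced-path argument for why the immersion is injective, which the paper leaves implicit.
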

\begin{proof}
    Suppose $p(T^\phi)$ is a tree and take $g \in \stab(T^\phi)$.

    Consider the graph of groups $T^\phi // \fix(T^\phi)$. By \Cref{lem:tphiInheretsLabels}, the underlying graph $T^\phi / \fix(T^\phi)$ locally injects into $T/G$. In fact its image must be in $p(T^\phi)$, since the local injection is induced by the inclusion $T^\phi \hookrightarrow T$, but given $p(T^\phi)$ is a tree this local injection is an injection, so $T^\phi / \fix(T^\phi)$ is a finite tree. Moreover all of the vertex $T^\phi // \fix(T^\phi)$ are infinite cyclic.
    Since $\fix(T^\phi)$ is the fundamental group of a finite tree of cyclic groups, it is finitely generated by Proposition \ref{prop:bassFg}, of rank at most $|Vp(T^\phi)|$ as required.
\end{proof}

\begin{corollary}
\label{corol:contractible_combined}
Let $G$ be a GBS group, and let $\phi\in\aut^T(G)$.
If $T/G$ is a tree then $\fix(\phi)$ is finitely generated, of rank at most $\max(1, 2|E(T/G)|)$.
\end{corollary}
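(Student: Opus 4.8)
The plan is to reduce to the three cases already handled in this section, according to the trichotomy for an automorphism $\phi \in \aut^T(G)$: either $\phi$ fixes no point of $T$, or $\phi$ fixes a point and hence has a well-defined sign in $\{\pm 1\}$ (the sign being defined precisely when $\phi$ fixes a point, by \Cref{lem:nothingFixes}). Each of these three cases is covered by an earlier result in this section, so the real work is to check that every resulting rank bound is at most $\max(1, 2|E(T/G)|)$, using that a finite tree satisfies the identity $|V(T/G)| = |E(T/G)| + 1$.

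When $\phi$ fixes no point, \Cref{thm:phi_hyperbolic} gives $\rk(\fix(\phi)) \leq 2$. Here I would observe that $T$ cannot be a single vertex, since otherwise $\phi$ would fix it; hence $T$, and therefore $T/G$, has at least one edge, so $2|E(T/G)| \geq 2$ and the bound holds. When $\phi$ fixes a point with $\sgn(\phi) = -1$, \Cref{prop:negSgnFg} directly gives that $\fix(\phi)$ is free of rank at most $2|E(T/G)|$, which is already the desired bound.

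The remaining case, $\sgn(\phi) = 1$, is where the hypothesis that $T/G$ is a tree does its work. The key observation is that the minset $T^\phi$ is a subtree of $T$, hence connected, so its image $p(T^\phi)$ is a connected subgraph of $T/G$; and a connected subgraph of a tree is itself a tree. Thus the hypothesis of \Cref{thm:contractible} is satisfied automatically, yielding $\rk(\fix(\phi)) \leq |Vp(T^\phi)| \leq |V(T/G)|$. I would then convert this vertex bound into the required edge bound via the tree identity $|V(T/G)| = |E(T/G)| + 1$, which is at most $\max(1, 2|E(T/G)|)$: when $|E(T/G)| = 0$ the right-hand side equals $1 = |V(T/G)|$, and when $|E(T/G)| \geq 1$ one has $|E(T/G)| + 1 \leq 2|E(T/G)|$.

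I expect the only genuinely nontrivial step to be the $\sgn(\phi) = 1$ case, namely confirming that $p(T^\phi)$ is a tree so that \Cref{thm:contractible} applies, and then reconciling its vertex-count bound with the uniform edge-count bound stated in the corollary. The other two cases are essentially immediate quotations of \Cref{thm:phi_hyperbolic} and \Cref{prop:negSgnFg}, modulo the small observation that a hyperbolic $\phi$ forces $T/G$ to contain an edge.
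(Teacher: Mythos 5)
Your proposal is correct and follows essentially the same route as the paper: split into the hyperbolic, $\sgn=-1$, and $\sgn=1$ cases, quote Theorem \ref{thm:phi_hyperbolic}, Proposition \ref{prop:negSgnFg} and Theorem \ref{thm:contractible} respectively (the last applying because $p(T^\phi)$ is a connected subgraph, hence a subtree, of the tree $T/G$), and finish with the arithmetic $|V(T/G)|=|E(T/G)|+1\leq\max(1,2|E(T/G)|)$. The only cosmetic difference is that the paper disposes of the single-vertex quotient at the outset (where $G$ is cyclic), while you absorb that degenerate case into each branch.
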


\begin{proof}
    Write $\Gamma = T/G$.

    The result is clear when $\Gamma$ consists of a single point, as $G$ is cyclic so $\rk(\fix(\phi))\leq 1=\max(1, 2|E\Gamma|)$.
    Suppose therefore that $\Gamma$ contains an edge; here $2|E\Gamma|=\max(1, 2|E\Gamma|)$.

    If $\sgn(\phi)$ is undefined or $\sgn(\phi) = -1$, then we can apply Theorem \ref{thm:phi_hyperbolic} or Proposition \ref{prop:negSgnFg} respectively, and the result holds.
    Otherwise, $\phi$ satisfies the assumptions of \Cref{thm:contractible} since $p(T^\phi)$ is a subtree of $\Gamma$, and so $\rk(\fix(\phi))\leq |V p(T^\phi)| \leq |V\Gamma| \leq 2|E\Gamma|$ as required.
\end{proof}

We now turn to the case where $(G,T)$ has Betti number one. In this case, the behaviour depends on the generator of $\Delta(G) \leq \mathbb{Q}^*$ (notice that, since $\Delta$ factors through $p_*$, in this case $\Delta(G)$ is an infinite cyclic subgroup so it makes sense to talk about it being generated by a single element).

\begin{proposition}\label{prop:BettiNumber1fgFix}
     Suppose $(G,T)$ is a non-elementary GBS system of Betti number 1, such that  $\Delta(G) \leq \mathbb{Q}^*$ is not generated by an integer except possibly $-1$. Then for all $\phi \in \aut^T(G)$, $\fix(\phi)$ is finitely generated.

     Moreover, if $\Delta(G) = \{1, -1\}$, then $\rk(\fix(\phi)) \leq 2|V(T/G)|+1$.
\end{proposition}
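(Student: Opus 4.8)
The plan is to split on the three possible behaviours of $\phi$ on $T$ and reduce everything to the case $\sgn(\phi)=1$. If $\phi$ fixes no point of $T$, then $\rk(\fix(\phi))\le 2$ by \Cref{thm:phi_hyperbolic}; if $\sgn(\phi)=-1$, then $\fix(\phi)$ is free of rank at most $2|E(T/G)|$ by \Cref{prop:negSgnFg}. Since $\beta(G)=1$ forces $|E(T/G)|=|V(T/G)|$, both bounds sit inside the claimed $2|V(T/G)|+1$. So I would assume $\sgn(\phi)=1$, in which case $\fix(\phi)$ acts on the subtree $T^\phi$ (\Cref{cor:MinsetAction}) with every vertex stabiliser equal to the full infinite cyclic group $G_x$, because $\phi$ restricts to the identity on each $G_x$. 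By part (1) of \Cref{prop:bassFg} it then suffices to show that the quotient $T^\phi/\fix(\phi)$ is \emph{finite}; the rank bound will come from counting its cells.

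To measure finiteness orbit-by-orbit I would fix a $G$-orbit of vertices with representative $x$ and generator $a$ of $G_x$, and observe that for any $g$ with $gx\in T^\phi$ one has $g^{-1}\phi(g)=a^{k}\in G_x$ for a unique $k\in\mathbb{Z}$. A short calculation (exactly as in \Cref{lem:negSgnTreeStructure}, but now with $\phi(a)=a$) shows that $gx\mapsto k$ is invariant under replacing $g$ by $fg$ with $f\in\fix(\phi)$ and by $gc$ with $c\in G_x$, and that two vertices sharing the same $k$ lie in the same $\fix(\phi)$-orbit. Hence $gx\mapsto k$ is a \emph{bijection} from the $\fix(\phi)$-orbits over $G\cdot x$ onto the set $K_x\subseteq\mathbb{Z}$ of attained exponents. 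Since $T/G$ has finitely many cells, finiteness of $T^\phi/\fix(\phi)$ is equivalent to finiteness of every $K_x$ (and the analogous set for edges). The crucial feature is that, because $\sgn(\phi)=1$, the exponent $k$ \emph{cannot} be normalised away by elliptic modifications as it could when $\sgn(\phi)=-1$; the only remaining way to control it is through the modulus.

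This is the step I expect to be the main obstacle. When $p(T^\phi)$ is a tree, the immersion $i\colon T^\phi/\fix(\phi)\looparrowright T/G$ of \Cref{lem:tphiInheretsLabels} is injective, so \Cref{thm:contractible} already gives finite generation (with $\rk(\fix(\phi))\le|Vp(T^\phi)|\le|V(T/G)|$) and each $K_x$ is automatically finite. The substantive case is when $p(T^\phi)$ contains the unique essential loop of $T/G$; here I would lift a loop-covering segment into the convex subtree $T^\phi$, producing an element $g\in G$ with nonzero image under the forgetful map $p_*\colon G\to\pi_1(T/G)\cong\mathbb{Z}$ (note an elliptic element always has $p_*=0$, so $g$ is genuinely hyperbolic and winds the loop), and whose modulus is $\Delta(g)=r^{p_*(g)}$, where $r$ generates the infinite cyclic group $\Delta(G)$. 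The content I would isolate as a lemma is that walking along such a loop-direction inside $T^\phi$ multiplies the edge stabilisers by $r$ per period, so an unbounded exponent set $K_x$ would produce an infinite strictly ascending (or descending) chain of stabilisers along a ray of $T^\phi$, and hence an infinitely generated fixer — precisely the behaviour realised by $\BS(1,n)$ and, more generally, whenever $r$ is an integer. When $r$ is not an integer, i.e.\ $r=\alpha/\beta$ in lowest terms with $|\alpha|,|\beta|\ge 2$, the indices increase on one side of the loop and decrease on the other, so no monotone chain can form and each $K_x$ is forced to be finite; the exceptional case $\Delta(G)=\{1,-1\}$, i.e.\ $r=-1$, is handled by the same balancing, where $g$ additionally acts by inversion on the cyclic stabilisers when $p_*(g)$ is odd. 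Making this chain-length argument precise from the labelled immersion is where the hypothesis does all of its work.

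Finally I would assemble the rank bound when $\Delta(G)=\{1,-1\}$. The inversion in the previous step caps the number of $\fix(\phi)$-orbits of vertices, and of edges, at two per cell of $T/G$ (mirroring the count in \Cref{lem:negSgnTreeStructure}), so writing $\Gamma'=T^\phi/\fix(\phi)$ we get a finite connected graph of infinite-cyclic groups with $|E\Gamma'|\le 2|E(T/G)|=2|V(T/G)|$. As $\fix(\phi)=\pi_1(\Gamma')$ is generated by one element per vertex group together with one stable letter per non-spanning-tree edge, we obtain $\rk(\fix(\phi))\le|E\Gamma'|+1\le 2|V(T/G)|+1$; the tree case already satisfies this since $|V(T/G)|\le 2|V(T/G)|+1$. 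The remaining gap to fill rigorously is exactly the modulus/chain-length lemma bounding the exponent sets $K_x$.
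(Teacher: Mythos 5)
Your reduction to $\sgn(\phi)=1$ via \Cref{thm:phi_hyperbolic} and \Cref{prop:negSgnFg}, the reformulation through Bass's criterion (\Cref{prop:bassFg}: it suffices that $T^\phi/\fix(\phi)$ be finite), and the orbit-counting map $gx\mapsto k$ where $g^{-1}\phi(g)=a^k$ all match the paper's strategy, and your rank bookkeeping at the end is essentially the paper's. But the step you yourself flag as ``the main obstacle'' is where the entire content of the hypothesis on $\Delta(G)$ lives, and the mechanism you sketch for it --- ascending or descending chains of stabilisers, with non-integrality of the generator $r$ of $\Delta(G)$ ``balancing'' indices so that ``no monotone chain can form'' --- is not an argument that works, and I do not see how to make it one. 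Finiteness of your exponent sets $K_x$ is not a statement about inclusions of stabilisers along a ray (all the vertex stabilisers involved are conjugate copies of $\mathbb{Z}$, so there is no meaningful chain of subgroups to bound); it is an \emph{integrality} obstruction. There is also a prior structural gap: the exponents in $K_x$ could a priori be realised by completely unrelated elements of $G$, so before any modulus computation can bite you must organise infinitely many $\fix(\phi)$-orbits into the orbit of the powers of a \emph{single} element.

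The paper supplies exactly these two missing pieces. First, if $T^\phi/\fix(\phi)$ is infinite it contains an infinite ray whose image under the labelled immersion of \Cref{lem:tphiInheretsLabels} must wrap around the unique reduced loop of $T/G$ infinitely often; a lifting argument (take a lift of this ray coinciding maximally with the reduced ray through the points $g^kv$, and use $G_u\leq\fix(\phi)$ at any putative divergence vertex to contradict maximality) produces one element $g$ with $p_*(g)$ generating $\pi_1(T/G)$, with $g^kv\in T^\phi$ for all $k\geq 0$, and with all $g^kv$ in distinct $\fix(\phi)$-orbits. Second, writing $\phi(g^k)=g^kx^{l_k}$ with $x$ generating $G_v$, the recursion $l_{k+1}=l_k\Delta(g)+l_1$ yields $l_k=l_1\sum_{i=0}^{k-1}\Delta(g)^i$. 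Since $\Delta(g)$ generates $\Delta(G)$, either $\Delta(g)=-1$, whence $l_2=0$, $g^2\in\fix(\phi)$, and the orbits are not distinct (quantitatively, at most two vertex orbits per $G$-orbit, which is what gives the bound $2|V(T/G)|+1$); or $\Delta(g)\in\mathbb{Q}\setminus\mathbb{Z}$, whence $l_k$ fails to be an integer for large $k$, contradicting $x^{l_k}\in G_v$. This geometric-series integrality argument is the lemma your proposal is missing; you should replace the stabiliser-chain heuristic with it.
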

\begin{proof}
    If $\phi$ does not fix a point on $T$ or if $\sgn(\phi) = -1$, then the result follows from Theorem \ref{thm:phi_hyperbolic} and Proposition \ref{prop:negSgnFg} respectively.
    This includes the ``moreover'' part of the statement, since the bound on rank from Proposition \ref{prop:negSgnFg} is $2|E(T/G)| \leq 2|V(T/G)|+1$ since $E|(T/G)| = |V(T/G)|$ when $T/G$ has Betti number 1.
    Now we turn to the case where $\sgn(\phi) = 1$.

    By Proposition \ref{prop:bassFg} it is sufficient to show that the graph $T^\phi/\fix(\phi)$ is finite, since it can be turned into a graph of groups for $\fix(\phi)$ with all vertex groups being $\mathbb{Z}$. Suppose for contradiction that $T^\phi/\fix(\phi)$ is infinite.

    \begin{claim}\label{claim:powerRay}
        There exists $v \in T^\phi$ and $g \in G$ such that $p_*(g)$ generates $\pi_1(T/G)$, such that $g^kv \in T^\phi$ for all $k \geq 0$. Furthermore, each vertex $g^kv$ is in a different $\fix(\phi)$ orbit.
    \end{claim}
    \begin{proof}[Proof of Claim.]
    
    We write $p: T \rightarrow T/G$ and $q: T^\phi \rightarrow T^\phi / \fix(\phi)$ for the quotient maps, and $i: T^\phi \rightarrow T$ for the inclusion.
    
     Since $T^\phi/\fix(\phi)$ is infinite, it in particular contains an infinite ray $\alpha : \mathbb{R}_+ \rightarrow T^\phi/\fix(\phi)$. Without loss of generality assume $\alpha$ has a reduced image in $T/G$ under the immersion $\mu: T^\phi/\fix(\phi) \looparrowright T/G$ from Lemma \ref{lem:tphiInheretsLabels}; since $T/G$ has Betti number $1$, the image of $\alpha$ under $\mu$ must wrap around the unique reduced loop infinitely many times. 
    
    Take $v$ to be a lift to $T^\phi$ of the initial vertex of $\alpha$ in $T^\phi/\fix(\phi)$, and $gv$ to be the next vertex in the same $G$-orbit on a lift of $\alpha$ to $T^\phi$ based at $v$ (notice that the lift is not unique since the action on $T^\phi$ is not free). Since $\mu\alpha$ follows the unique reduced loop in $T/G$, we see that $p_*(g)$ generates $\pi_1(T/G)$.
    
    Now consider the ray $\beta$ which is the unique reduced ray (in $T$) passing through $g^kv$ for all $k \geq 0$. We will now show that $\beta$ is in fact a ray in $T^\phi$.

    We first observe that $p\beta = \mu\alpha$, by choice of $g$ and $v$. Consider $\hat{\alpha}$, a lift of $\alpha$ to $T^\phi$ which coincides with $\beta$ maximally; such a lift exists by Zorn's lemma, noting that the set of lifts of $\alpha$ coinciding with $\beta$ non-trivially is non-empty since it contains the lift used to define $g$. Suppose for contradiction that $\hat{\alpha}$ and $\beta$ diverge. This necessarily happens at a vertex which we call $u$. Now, the next edges in each $\hat{\alpha}$ and $\beta$, $e_{\hat{\alpha}}$ and $e_{\beta}$, have $u$ as an endpoint, and are in the same $G_u$-orbit, since the images of $\hat{\alpha}$ and $\beta$ in $T/G$ agree. But $G_u \leq \fix(T^\phi)$ since $u \in T^\phi$ and $\sgn(\phi) = 1$, so in fact, since $\fix(\phi)$ leaves $T^\phi$ invariant, $e_{\hat{\alpha}}$ is in $T^\phi$ since $e_\beta$ is. Since the edges are in the same $\fix(\phi)$ orbit, we could have extended $\hat{\alpha}$ along $e_\beta$ instead of $e_{\hat{\alpha}}$, contradicting the maximality of $\hat{\alpha}$. Hence, we have shown that $\hat{\alpha}$ and $\beta$ coincide, so $g^kv \in T^\phi$ for all $k \geq 0$.

    Finally we have to check that, for all $k$, $g^kv$ are in different $\fix(\phi)$ orbits. We observe that $\beta$ is a ray in $T^\phi$, so we may write $\mu q\beta = pi\beta = \mu \alpha$. Since $\mu$ is a local injection and $\alpha$ and $q\beta$ share an initial segment, they coincide, but this exactly means each $g^kv$ is in a different $\fix(\phi)$ orbit.
    \end{proof}

    We will now use the points $g^kv \in T^\phi$ to produce a contradiction with the assumption on $\Delta(G)$. Observe that $$g^kv = \phi \cdot g^kv = \phi(g^k) v,$$ so there are integers $l_k$ such that $\phi(g^k) = g^kx^{l_k}$ for all $k$, where $x$ generates $G_v$. Write $l = l_1$ for the sake of notation.

    \begin{claim}\label{claim:geometricSeries}
        For $k \geq 1$, $l_k = \sum_{i = 0}^{k-1} \Delta(g)^il$
    \end{claim}
    \begin{proof}[Proof of Claim.]
        We prove the claim by induction. The base case is clear. For the inductive step, observe that $g^{k+1}x^{l_{k+1}} = \phi(g^{k+1}) = g^kx^{l_k}gx^l$, and in turn $gx^{l_{k+1}-l}g^{-1} = x^{l_k}$, so $l_k\Delta(g) = l_{k+1} - l$. The result follows.
    \end{proof}

    Since $\pi_1(T/G)$ is generated by $p_*(g)$ and $\Delta$ factors through $p_*$, we see that $\Delta(g)$ generates $\Delta(G)$. So by assumption, either $\Delta(g) \notin \mathbb{Z}$ or $\Delta(g) = -1$.

    In the case that $\Delta(g) = -1$, $l_2 = 0$ so $\phi(g^2) = g^2$. This contradicts that, by Claim \ref{claim:powerRay}, each $g^kv$ is in a different $\fix(\phi)$ orbit.
    In fact, in this case, we have shown more: the graph $T^\phi/\fix(\phi)$ can have at most twice as many vertices as $T/G$. However $T^\phi/\fix(\phi) \looparrowright T/G$, so the Betti number of $T^\phi/\fix(\phi)$ is 1. The bound on the rank of $\fix(\phi)$ follows.

    In the case that $\Delta(g) \in \mathbb{Q}\setminus\mathbb{Z}$, we have that $l_k = \sum_{i = 0}^{k-1} \Delta(g)^il$, but it is clear that this is not an integer for sufficiently large $k$, which again is a contradiction. 
    
\end{proof}

 We conclude this section by considering finite order automorphisms, proving Theorem \ref{thmintro:allFiniteOrder}, and then summarising sufficient conditions we have established for $\phi \in \aut^T(G)$ to have a finitely generated fixed subgroup.

\begin{lemma}\label{lem:finiteorder}
    Let $(G,T)$ be a GBS system, and take $\phi \in \aut^T(G)$ of finite order. Then $\fix(\phi)$ is finitely generated.
\end{lemma}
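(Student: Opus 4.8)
The plan is to split on the value of $\sgn(\phi)$, isolating a single genuinely new case and reducing it to a short computation that uses $\phi^n=\mathrm{id}$. First I would clear the easy cases. A finite-order isometry of a tree is elliptic, so $\phi$ fixes a point of $T$ and $\sgn(\phi)$ is defined (alternatively one invokes Theorem \ref{thm:phi_hyperbolic} to handle the case, here vacuous, where $\phi$ fixes no point). If $\sgn(\phi)=-1$ then $\fix(\phi)$ is finitely generated by Proposition \ref{prop:negSgnFg}. This leaves $\sgn(\phi)=1$, where $T^\phi$ is the fixed-point subtree and $\fix(\phi)$ acts on it by Corollary \ref{cor:MinsetAction}. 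Since $\sgn(\phi)=1$, $\phi$ restricts to the identity on $G_v$ for every $v\in T^\phi$, so each point stabiliser of $\fix(\phi)\curvearrowright T^\phi$ equals $\fix(\phi)\cap G_v=G_v\cong\mathbb{Z}$, which is finitely generated. By Proposition \ref{prop:bassFg}(1) it therefore suffices to show that the quotient graph $T^\phi/\fix(\phi)$ is finite.

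The key step is to prove that the map $T^\phi/\fix(\phi)\to T/G$ induced by the inclusion $T^\phi\hookrightarrow T$ is injective on vertices and on edges; as $T/G$ is finite, this forces $T^\phi/\fix(\phi)$ to be finite. Let $n$ be the order of $\phi$, and suppose $s$ and $gs$ both lie in $T^\phi$, where $s$ is a vertex or edge and $g\in G$, so that $s,gs$ share a $G$-orbit. Using $\phi\cdot s=s=\phi\cdot gs$ together with Lemma \ref{lem:Compatibility} gives $\phi(g)s=gs$, whence $g^{-1}\phi(g)\in G_s=\langle x\rangle$, say $g^{-1}\phi(g)=x^l$. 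Since $s\in T^\phi$ and $\sgn(\phi)=1$ (and endpoints of a fixed edge are fixed, so $\phi$ is the identity on $G_s$ in either case), we have $\phi(x)=x$, so an easy induction gives $\phi^k(g)=gx^{kl}$ for all $k$. Taking $k=n$ yields $g=\phi^n(g)=gx^{nl}$, forcing $x^{nl}=1$ and hence $l=0$, as $x$ has infinite order. Thus $\phi(g)=g$, i.e.\ $g\in\fix(\phi)$, so $s$ and $gs$ lie in the same $\fix(\phi)$-orbit, proving injectivity.

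It then follows that $T^\phi/\fix(\phi)$ has at most $|V(T/G)|$ vertices and $|E(T/G)|$ edges, so it is finite, and Proposition \ref{prop:bassFg}(1) finishes the argument. The whole content is the displayed computation, and the only point needing care is that $\phi$ fixes $G_s$ pointwise: this is exactly what $\sgn(\phi)=1$ delivers, via the remark that once $\phi$ is the identity on one fixed-point stabiliser it is the identity on all of them, and it is precisely this fact that converts the relation $\phi(g)=gx^l$ into the clean formula $\phi^k(g)=gx^{kl}$ whose $k=n$ instance kills $l$. I expect no serious obstacle; notably, in contrast to Proposition \ref{prop:BettiNumber1fgFix}, finite order lets us avoid any ray-chasing or geometric-series estimate and makes no use of the Betti number or the modulus, so the argument applies to an arbitrary GBS system.
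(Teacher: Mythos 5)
Your proposal is correct and follows essentially the same route as the paper's proof: reduce to $\sgn(\phi)=1$ via Theorem \ref{thm:phi_hyperbolic} and Proposition \ref{prop:negSgnFg}, then for $v,gv\in T^\phi$ in the same $G$-orbit derive $\phi(g)=gx^l$ with $x$ generating $G_v$, iterate to $\phi^k(g)=gx^{kl}$, and use finite order together with torsion-freeness to force $l=0$, so that $T^\phi/\fix(\phi)$ is finite and Proposition \ref{prop:bassFg} applies. The only cosmetic difference is that you phrase the conclusion as injectivity of $T^\phi/\fix(\phi)\to T/G$ and treat edges explicitly, which the paper leaves implicit.
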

\begin{proof}
    As usual, in light of Corollary \ref{cor:MinsetAction}, we consider $\fix(\phi) \curvearrowright T^\phi$, and restrict to the case $\sgn(\phi) = 1$ by Theorem \ref{thm:phi_hyperbolic} and Proposition \ref{prop:negSgnFg}. 
    
    Suppose $v, gv \in T^\phi$ are in the same $G$-orbit. Then
    \[
    gv = \phi \cdot gv = \phi(g) (\phi \cdot v) = \phi(g)v
    \]
    and it follows that $\phi(g) = gx$ for some $x \in G_v$. However, since $\sgn(\phi) = 1$, we observe that, for all $k \in \mathbb{N}$, $\phi^k(g) = gx^k$. Since $G$ is torsion-free (it acts on a tree with torsion-free stabilisers) and $\phi$ is of finite order, it must be that $x = 1$, and so $v$ and $gv$ are in the same $\fix(\phi)$-orbit. So $T^\phi/\fix(\phi)$ is finite, and the result follows by Proposition \ref{prop:bassFg}.
\end{proof}

\begin{corollary}\label{cor:allFiniteOrder}
    Suppose $G$ is a GBS group and $\phi \in \aut(G)$ is of finite order. Then $\fix(\phi)$ is finitely generated.
\end{corollary}
\begin{proof}
    Since $\phi$ is of finite order, $\langle \phi \rangle \leq \aut(G)$ is a finite cyclic, and thus solvable, subgroup. By \cite[Corollary 8.4]{guirardel2007deformation}, $\phi$ fixes a point on the deformation space of GBS trees. It follows that $\phi$ leaves a GBS tree invariant, so Lemma \ref{lem:finiteorder} applies.
\end{proof}

\begin{theorem}\label{thm:sufficientFg}
    Suppose $(G,T)$ is a GBS system, and $\phi \in \aut^T(G)$. Then $\fix(\phi)$ is finitely generated if one of the following occur:

    \begin{enumerate}
        \item $\phi$ doesn't fix a point in $T$,
        \item $\phi$ fixes a point and acts non-trivially on the stabiliser,
        \item The image of $T^\phi$ in $T/G$ is a tree,
        \item $\phi$ is of finite order.
    \end{enumerate}
\end{theorem}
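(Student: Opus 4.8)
The plan is to observe that this theorem is a collation of the results established earlier in the section, and to prove it by a case analysis driven by the sign map $\sgn$ together with whether $\phi$ fixes a point of $T$. The key first step is to translate each hypothesis into a statement about $\sgn(\phi)$. Condition (1) is exactly the case where $\sgn(\phi)$ is undefined. Condition (2) --- that $\phi$ fixes a point but acts non-trivially on its stabiliser --- is, by the remark following the definition of $\sgn$, precisely the condition $\sgn(\phi) = -1$: a non-trivial automorphism of a stabiliser $G_x \cong \mathbb{Z}$ must be inversion, and the commensurability of stabilisers in a GBS tree then forces inversion on every $G_x$.

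With this dictionary in place, each condition dispatches to a previously-proved result. For condition (1) I would apply \Cref{thm:phi_hyperbolic}, which gives $\rk(\fix(\phi)) \leq 2$ and in particular finite generation. For condition (2), since it forces $\sgn(\phi) = -1$, \Cref{prop:negSgnFg} shows $\fix(\phi)$ is finitely generated (indeed free of rank at most $2|E(T/G)|$). For condition (4) I would cite \Cref{lem:finiteorder} directly.

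Condition (3) requires slightly more care, because the hypothesis that $p(T^\phi)$ is a tree can overlap the other cases. If $p(T^\phi)$ is a tree but $\phi$ fixes no point of $T$, then we are already in case (1); if $\phi$ fixes a point with $\sgn(\phi) = -1$, we are in case (2). So the only genuinely new subcase is $\sgn(\phi) = 1$ with $p(T^\phi)$ a tree, which is precisely the hypothesis of \Cref{thm:contractible}, yielding finite generation of rank at most $|Vp(T^\phi)|$.

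Since every hypothesis reduces to one of these four earlier statements, no substantial obstacle remains; the theorem is essentially a bookkeeping exercise. The only points demanding attention are the correct reading of condition (2) through the sign, and the observation that, once the easy overlaps with cases (1) and (2) are peeled off, condition (3) contributes nothing beyond the scope of \Cref{thm:contractible}.
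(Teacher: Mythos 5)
Your proposal is correct and matches the paper's proof, which likewise disposes of the four cases by citing Theorem \ref{thm:phi_hyperbolic}, Proposition \ref{prop:negSgnFg}, Theorem \ref{thm:contractible} and Lemma \ref{lem:finiteorder} respectively. Your extra care in reading condition (2) through the sign map and in peeling the overlaps off condition (3) only makes explicit what the paper leaves implicit.
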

\begin{proof}
    The theorem follows immediately by combining Theorem \ref{thm:phi_hyperbolic}, Proposition \ref{prop:negSgnFg}, Theorem \ref{thm:contractible} and Lemma \ref{lem:finiteorder}.
\end{proof}

\subsection{Non-finitely generated fixed subgroups}
\label{sec:nonFG_examples}
In this subsection, we provide sufficient conditions on $1$-free GBS systems for non-finitely generated fixed subgroups to exist.

\begin{lemma}\label{lem:integralModInfiniteGen}
    Suppose $(G,T)$ is a non-elementary 1-free GBS system with a presentation containing a stable letter $t$ such that $\Delta(t) \in \mathbb{Q}^* \setminus \{-1\}$. 
    
    Then if $\Delta(t) \in \mathbb{Z}$, there is $\phi \in \aut^T(G)$ such that $\fix(\phi)$ is not finitely generated.

    Otherwise, if $\Delta(t) \notin \mathbb{Z}$ and $\Delta(t^{-1}) \notin \mathbb{Z}$, and furthermore $(G,T)$ is of Betti number $1$, then there are fixed subgroups of arbitrarily large finite rank.
\end{lemma}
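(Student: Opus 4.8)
The plan is to prove both statements with one family of twist automorphisms and the analysis of $\fix(\phi)\curvearrowright T^\phi$ provided by Corollary \ref{cor:MinsetAction}. Fix a presentation for $(G,T)$ in which the loop relation of $t$ reads $tx^pt^{-1}=x^q$, where $x$ generates the vertex group $G_w$ at the vertex $w$ that $t$ moves and $\Delta(t)=p/q$; note $|p|,|q|\ge 2$ by $1$-freeness. For $l\in\mathbb Z$ let $\phi=\phi_l$ fix every vertex generator and every other stable letter and send $t\mapsto tx^l$. As $x^l$ commutes with $x^p$ the relation survives, so $\phi_l\in\aut(G)$ (with inverse $t\mapsto tx^{-l}$), and as a twist fixing every vertex group it lies in $\aut^T(G)$. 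Since $\phi_l$ is the identity on $G_w$ and $(G,T)$ is $1$-free, the generator $x$ fixes only $w$ in $T$, so Lemma \ref{lem:Compatibility} forces $\phi_l\cdot w=w$; thus $w\in T^\phi$ and $\sgn(\phi_l)=1$. The engine of the whole argument is Claim \ref{claim:geometricSeries}: taking $q\mid l$ gives $\phi_l(t^k)=t^kx^{d_k}$ with $d_k=l\sum_{i=0}^{k-1}\Delta(t)^i$, so $t^kw\in T^\phi$ exactly while the recursion keeps each $d_j$ an integer divisible by $q$, and these vertices lie in distinct $\fix(\phi_l)$-orbits because the $d_k$ are distinct (using $\Delta(t)\ne 0,-1$).

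For the first statement put $\Delta(t)=m\in\mathbb Z\setminus\{-1\}$ and $l=q$. Then $d_k=q\sum_{i=0}^{k-1}m^i$ is an integer divisible by $q$ for every $k$, so the entire ray $\{t^kw:k\ge 0\}$ lies in $T^\phi$ and meets infinitely many $\fix(\phi)$-orbits; hence $T^\phi/\fix(\phi)$ is infinite. Because $\sgn(\phi)=1$ and $(G,T)$ is $1$-free, Lemma \ref{lem:sign1minimal} makes $\fix(\phi)\curvearrowright T^\phi$ minimal, so Proposition \ref{prop:bassFg}(2), in contrapositive, shows $\fix(\phi)$ is not finitely generated. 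This half is routine once the construction is in place.

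For the second statement $\Delta(t)=a/b$ is in lowest terms with $|a|,|b|\ge 2$, so Proposition \ref{prop:BettiNumber1fgFix} already gives that every $\fix(\phi_l)$ is finitely generated, and I only need the ranks to be unbounded. Now the ray is finite: by the same recursion, integrality of $d_k$ fails once $k$ exceeds roughly the $q$-adic valuation of $l$, and the symmetric computation for $\Delta(t^{-1})=b/a\notin\mathbb Z$ bounds the backward ray, so $T^\phi/\fix(\phi_l)$ is a finite graph. It carries no loop, since a loop would force $\Delta(t)^k=1$, and by Lemma \ref{lem:tphiInheretsLabels} it immerses into the Betti-$1$ graph $T/G$; hence it is a finite tree whose extent along the loop direction grows without bound as the $q$-adic valuation of $l$ increases. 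Thus $\fix(\phi_l)$ is the fundamental group of a $1$-free tree of infinite cyclic groups containing a segment $\langle x_0,\dots,x_K\mid x_k^{q}=x_{k+1}^{p}\rangle$ with $K\to\infty$.

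The hard part is the last step: showing $\rk(\fix(\phi_l))\to\infty$, i.e. that these iterated amalgams have unbounded rank. The obstruction to an easy proof is that $H_1(\fix(\phi_l))\cong\mathbb Z$ for every $l$, so the rank is invisible to abelian, nilpotent, or (when the labels are coprime) free-product quotients: at an interior vertex the two incident relations would force $x_k$ to have order dividing both $p$ and $q$. When $d=\gcd(p,q)\ge 2$ there is a clean argument, namely sending each $x_k$ to a generator of its own copy of $\mathbb Z/d$ kills every edge element and exhibits $(\mathbb Z/d)^{*(K+1)}$ as a quotient, so Grushko's theorem gives $\rk(\fix(\phi_l))\ge K+1$. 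For coprime labels such as $\BS(2,3)$ this route is unavailable, and I expect the genuine work to be a direct Nielsen-method/normal-form argument for the action on the segment's Bass--Serre tree (note this action is not acylindrical, so Sela--Weidmann accessibility cannot be quoted directly), proving that a generating set of size $r$ can only produce elements supported on a bounded sub-segment, and hence that $K$ is bounded in terms of $\rk(\fix(\phi_l))$.
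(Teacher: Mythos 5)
Your construction of the twist automorphisms, the geometric-series recursion for $\phi_l(t^k)$, and the whole argument for the first statement ($\Delta(t)\in\mathbb{Z}\setminus\{-1\}$ gives infinitely many $\fix(\phi)$-orbits of vertices, hence non-finite generation via Lemma \ref{lem:sign1minimal} and Proposition \ref{prop:bassFg}) coincide with the paper's proof and are correct. The problem is the second statement, and you have correctly located where the difficulty lies but not resolved it. You reduce to showing that the iterated amalgams $\langle x_0,\dots,x_K\mid x_k^{q}=x_{k+1}^{p}\rangle$ have rank growing with $K$; your quotient-to-$(\mathbb{Z}/d)^{*(K+1)}$ argument handles $\gcd(p,q)=d\ge 2$, but for coprime labels (e.g.\ the segments arising from $\BS(2,3)$) you explicitly defer to a hoped-for Nielsen-method argument that you do not carry out. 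As written, the proof of the ``arbitrarily large finite rank'' clause is therefore incomplete precisely in the case the lemma is most needed for (it feeds into Theorem \ref{thmintro:baumslagsolitar}(2), whose content is exactly the coprime situation $p\nmid q$).

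The paper closes this gap by quoting an external result rather than by a bare-hands Nielsen argument: since $\sgn(\phi_N)=1$ and the system is $1$-free, Lemma \ref{lem:sign1minimal} makes the action $\fix(\phi_N)\curvearrowright T^{\phi_N}$ minimal, so by Proposition \ref{prop:bassFg} the quotient is finite and $(\fix(\phi_N),T^{\phi_N})$ is itself a GBS system; one then applies Levitt's theorem on the rank of GBS groups \cite{levitt2015rank}, which bounds the rank from below by the number of \emph{$q'$-plateaus} for a prime $q'$ dividing the denominator $q$ of $\Delta(t)$. Because $T^{\phi_N}/\fix(\phi_N)$ immerses into the Betti-number-one graph $T/G$ (Lemma \ref{lem:tphiInheretsLabels}), the path between the images of $t^{k_1}v$ and $t^{k_2}v$ carries a label divisible by $q'$, so the $M_N$ vertices $t^kv$ lie in pairwise distinct $q'$-plateaus and $\rk(\fix(\phi_N))\ge M_N$; taking $N$ a large power of $q$ makes $M_N$ arbitrary. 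If you want to keep your argument self-contained you would need to prove the unbounded-rank statement for these coprime-label segments directly, which is genuinely delicate (as you note, the abelianisation is $\mathbb{Z}$ and the action on the Bass--Serre tree is not acylindrical); otherwise the correct fix is to invoke the plateau lower bound from \cite{levitt2015rank} as the paper does.
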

\begin{proof}
    Fix the presentation for $G$ coming from a fundamental domain (see \Cref{sec:combinatorialFundamentalDomain}) with $t$ as a stable letter, which exists by hypothesis. Let $x$ be an arbitrary vertex generator in this presentation, with $v$ the corresponding vertex in the fundamental domain. We may choose a suitably large $p \neq 0$ such that $x^pt = tx^{p\Delta(t)}$. Now, for any $N \in \mathbb{Z}^*$, define $\phi_N$ such that $\phi_N(t) = tx^{pN}$ and $\phi_N$ acts trivially on all other generators. This is a twist automorphism, so is in $\aut^T(G)$ by work of Bass and Jiang \cite{bass1996automorphism} (see also \cite{levitt2005automorphisms}).

    Now define $l_i = \sum_{j = 0}^{i-1} \Delta(t)^j$, and $l_0 = 0$. We claim that, for $k \geq 0$, if $N\l_i$ is an integer for all $i < k$ then $\phi_N(t^k) = t^kx^{pNl_k}$. This is trivial for $k=0$, and we proceed inductively (under the assumption that $Nl_i$ is an integer for $i < k+1$), observing that $$\phi_N(t^{k+1}) = \phi_N(t^k)\phi_N(t) = t^kx^{pNl_k}tx^p = t^{k+1}x^{pN(l_k\Delta(t)+1)} = t^{k+1}x^{pNl_{k+1}},$$ where the penultimate equality uses that $Nl_k$ is an integer, and the final equality is by definition of $l_{i}$. Hence, we may conclude that for all $k \geq 0$ such that $Nl_i$ is an integer for $i < k$, $$\phi_N \cdot t^kv = \phi_N(t^k)(\phi_N \cdot v) = t^kv.$$ For convenience, from take $M_N$ to be maximal such that for all $0 \leq i < M_N$, $Nl_i$ is an integer (we potentially allow $M_N = \infty$). 

    Suppose that for $M_N > k_1, k_2 \geq 0$, $gt^{k_1}v = t^{k_2}v$ where $g \in \fix(\phi_N)$. Since $G_v$ is generated by $x$ it follows that, for some $i \in \mathbb{Z}$, $t^{-k_1}gt^{k_2} = x^i$, so $g = t^{k_1}x^it^{-k_2}$. Using that $$t^{k_1}x^it^{-k_2} = \phi_N(t^{k_1}x^it^{-k_2}),$$ and the previous claim, one directly computes that $l_{k_1} = l_{k_2}$. Since $\Delta(t) \neq -1$, this implies that $k_1 = k_2$. It follows that the vertices $t^kv$ for $M_N > k \geq 0$ are each in different $\fix(\phi_N)$ orbits of $T^\phi_N$.

    Suppose now that $\Delta(t) \in \mathbb{Z} \setminus \{-1\}$. Then $M_N = \infty$ (regardless of $N$), so $T^\phi_N$ contains infinitely many $\fix(\phi_N)$ orbits, and $\fix(\phi_N)$ is not finitely generated by Proposition \ref{prop:bassFg} and Lemma \ref{lem:sign1minimal} as required. 

    Suppose now that $\Delta(t), \Delta(t^{-1}) \notin \mathbb{Z}$ and $T/G$ has Betti number $1$. Now $\Delta(t) = \frac{p}{q} \notin \mathbb{Z}$ for $p, q \in \mathbb{Z}^*$, which we take to be co-prime. It must be that neither $p$ nor $q$ is equal to 1.
    
    By Proposition \ref{prop:BettiNumber1fgFix}, all fixed subgroups of automorphisms in $\aut^T(G)$ are finitely generated in this case. Since by Lemma \ref{lem:sign1minimal} the action $\fix(\phi_N) \curvearrowright T^{\phi_N}$ is minimal, the quotient is finite, so $(\fix(\phi_N), T^{\phi_N})$ is a GBS system. We can now apply \cite[Theorem 1.1]{levitt2015rank} on the rank of generalised Baumslag-Solitar groups. Take $q'$ an arbitrary prime factor of $q$. In the language of that paper, we aim to show that each of the vertices $t^kv$ for $M_N > k \geq 0$ is in a different $q'$-plateau, so the rank is at least $M_N$. Then by taking $N$ to be a large power of $q$, we may make $M_N$ arbitrarily large, completing the proof.
    
    We observe that, since $T^{\phi_N}/\fix(\phi_N) \looparrowright T/G$ and $T/G$ has Betti number $1$, any path between the images of vertices of $t^{k_1}v$ and $t^{k_2}v$ ($M_n > k_1, k_2 \geq 0$) in $T^\phi/\fix(\phi)$ is contained in the image of the geodesic in $T^\phi$ given by $[t^{k_1}v, t^{k_2}v]$. It follows that, considering $T^\phi//\fix(\phi)$ as a labelled graph, there is a label divisible by $q'$ on the end of an edge between the image of $t^{k_1}v$ and $t^{k_2}v$. One easily checks this means that these vertices are in different $q'$ plateaus.

\end{proof}

\begin{lemma}\label{lem:Betti2InfiniteGen}
    Suppose $(G,T)$ is a 1-free GBS system of Betti number at least 2. Then there is $\phi \in \aut^T(G)$ such that $\fix(\phi)$ is not finitely generated.
\end{lemma}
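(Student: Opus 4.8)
The plan is to construct a sign-$1$ twist automorphism $\phi \in \aut^T(G)$ whose fixed-point tree $T^\phi$ has infinitely many $\fix(\phi)$-orbits of vertices. Since $(G,T)$ is $1$-free, \Cref{lem:sign1minimal} will then give that $\fix(\phi) \curvearrowright T^\phi$ is minimal, so that \Cref{prop:bassFg} forces $\fix(\phi)$ to be infinitely generated. Concretely, I aim to find a vertex $v$ with $G_v = \langle x \rangle$ and an element $w \in G$ with $w^k v \in T^\phi$ for all $k \ge 0$, the points $w^k v$ being pairwise in distinct $\fix(\phi)$-orbits.

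The reason $\beta(G) \ge 2$ is needed is already visible in \Cref{lem:integralModInfiniteGen}: unrolling along a single loop $s$ forces the geometric series $\sum_{i<k}\Delta(s)^i$ to remain integral (as in \Cref{claim:geometricSeries}), which fails for large $k$ once $\Delta(s) \notin \mathbb{Z}$. The remedy is to unroll along an element $w$ with $\Delta(w) = 1$, for then the relevant exponents grow arithmetically rather than geometrically and never leave $\mathbb{Z}$. Such a hyperbolic $w$ exists precisely because $\Delta$ factors through $p_* \colon G \twoheadrightarrow \pi_1(T/G) = F_{\beta(G)}$ and, as $\beta(G) \ge 2$, the free group $F_{\beta(G)}$ has nontrivial commutator subgroup, which $\Delta$ must kill. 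If some loop has modulus in $\mathbb{Z}\setminus\{-1\}$ (in particular if $G$ is unimodular) we are already done by \Cref{lem:integralModInfiniteGen}; so I may assume no loop has modulus $1$.

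Under this assumption I pick two distinct basis loops $s, t$ of $\pi_1(T/G)$ and set $w = [s,t]$, so $\Delta(w) = \Delta(s)\Delta(t)\Delta(s)^{-1}\Delta(t)^{-1} = 1$ and $w$ is hyperbolic since $p_*(w) = [p_*(s), p_*(t)] \neq 1$. Let $v$ be a vertex at which $s$ is based, with $G_v = \langle x \rangle$, and let $\phi$ be the twist along $s$ by $x^a$, fixing every other generator; this lies in $\aut^T(G)$ by Bass--Jiang and has sign $1$. A computation parallel to that of \Cref{lem:integralModInfiniteGen} --- pushing the inserted powers of $x$ through the word $w$ and collecting the modular factors --- yields $\phi(w) = w x^c$, where $c$ is a nonzero rational multiple of $a$, nonzero because $\Delta(t) \neq 1$. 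Taking $a$ sufficiently divisible makes $c$ a nonzero multiple of the integer $p$ for which $w x^p w^{-1} = x^p$ (such $p$ exists since $\Delta(w) = 1$); then $w$ commutes with $x^c$, so $\phi(w^k) = w^k x^{ck}$ for all $k$. Hence $w^{-k}\phi(w^k) = x^{ck} \in G_v$, giving $w^k v \in T^\phi$; and if $g \in \fix(\phi)$ satisfies $g w^j v = w^k v$ then $g = w^k x^i w^{-j}$ for some $i \in \mathbb{Z}$, and applying $\phi$ forces $x^{c(k-j)} = 1$, so $j = k$. Thus the $w^k v$ fall into infinitely many $\fix(\phi)$-orbits, as required.

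I expect the main obstacle to be the computation $\phi(w) = w x^c$ itself, with enough control to guarantee both $c \neq 0$ and $p \mid c$. One must track each inserted power of $x$ as it is conjugated through $w$, choosing $a$ divisible by a high power of the edge labels so that all intermediate exponents stay integral, and check that the modular contributions genuinely fail to cancel --- this is exactly where the hypothesis $\Delta(t) \neq 1$ is used. The bookkeeping is heaviest when $s$ and $t$ are not based at a common vertex, so that $w$ is a long word meeting several vertex groups, but it is structurally identical to the single-loop calculation already carried out in \Cref{lem:integralModInfiniteGen}.
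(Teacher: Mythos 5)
Your proposal is correct and takes essentially the same route as the paper: reduce to Lemma \ref{lem:integralModInfiniteGen} when a stable letter has modulus $1$, otherwise twist one of two stable letters $s,t$ by a suitably divisible power of a vertex generator so that the commutator $w=[s,t]$ (which has $\Delta(w)=1$) satisfies $\phi(w^k)=w^kx^{ck}$ with $c\neq 0$, and conclude via Lemma \ref{lem:sign1minimal} and Proposition \ref{prop:bassFg}. The one step you defer --- verifying $\phi(w)=wx^c$ with $c\neq 0$ and $p\mid c$ --- is exactly what the paper handles by imposing its four explicit divisibility conditions on the twisting exponent, with the nonvanishing of $c$ coming from the other loop's modulus being $\neq 1$ just as you predict.
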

\begin{proof}
    Fix an arbitrary presentation for $(G,T)$ coming from a fundamental domain. Take $s, t$ to be two distinct stable letters (this is possible due to the Betti number). If $\Delta(s) = 1$, then by Lemma \ref{lem:integralModInfiniteGen} we are done, so suppose this is not the case.

    Fix $x$ to be an arbitrary vertex generator in the chosen presentation, and choose $v$ to be the corresponding vertex in the fundamental domain.

    Now, choose a $p \in \mathbb{Z} \setminus \{0\}$ such that the following conditions hold: \begin{enumerate}
        \item $x^ps = sx^{p\Delta(s)}$;
        \item $p\frac{(\Delta(s)-1)}{\Delta(s)\Delta(t)}$ is an integer;
        \item $x^{p(\Delta(s)-1)}t^{-1}s^{-1} = t^{-1}s^{-1}x^{p\frac{(\Delta(s)-1)}{\Delta(s)\Delta(t)}}$;
        \item $x^{p\frac{(\Delta(s)-1)}{\Delta(s)\Delta(t)}} tst^{-1}s^{-1} = tst^{-1}s^{-1}x^{p\frac{(\Delta(s)-1)}{\Delta(s)\Delta(t)}}$.
    \end{enumerate} This can always be arranged by taking $p$ as a product of suitably large integers making each condition work. For the final point, we are using the fact that $\Delta(tst^{-1}s^{-1}) = 1$.

    Now take $\phi$ to be the automorphism $\phi: t \mapsto tx^p$ which fixes all other generators. Again this is a twist automorphism.  Conditions (1) and (3) ensure that $\phi[t,s] = [t,s]x^{p\frac{(\Delta(s)-1)}{\Delta(s)\Delta(t)}}$. For simplicity write $\gamma = p\frac{(\Delta(s)-1)}{\Delta(s)\Delta(t)}$ and note that $\gamma \neq 0$ since we assumed $\Delta(s)$ was not 1. 

    We now compute, for any $k \in \mathbb{Z}$, \begin{align*}
        \phi \cdot [t,s]^kv &= \phi[t,s]^kv\\
        &= ([t,s]x^{p\frac{(\Delta(s)-1)}{\Delta(s)\Delta(t)}})^k(\phi \cdot v)\\
        &= [t,s]^kx^{kp\frac{(\Delta(s)-1)}{\Delta(s)\Delta(t)}}v\\
        &= [t,s]^kv.\\
    \end{align*}
    
    By exactly the argument in Lemma \ref{lem:integralModInfiniteGen}, each of these vertices is in a different $\fix(\phi)$ orbit, so $T^\phi$ contains infinitely many $\fix(\phi)$ orbits, and $\fix(\phi)$ is not finitely generated by Proposition \ref{prop:bassFg} and Lemma \ref{lem:sign1minimal} as required. 
\end{proof}

We are now ready to collect together the results from this section to complete the proofs of our main theorems.

\begin{theorem}[\Cref{thmintro:characterisingFg}]\label{thm:characterisingFg}
        Suppose $(G,T)$ is a $1$-free, non-elementary GBS system.
    Then $\fix(\phi)$ is finitely generated for all $\phi \in \aut^T(G)$ if and only if one of the following occurs:

    \begin{enumerate}
        \item $\beta(G)=0$, or
        \item $\beta(G)=1$ and either $\Delta(G) = \{1, -1\}$ or $\Delta(G)$ is not generated by an integer.
    \end{enumerate}
\end{theorem}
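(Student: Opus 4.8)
The plan is to prove this biconditional by assembling the results already developed in the section, matching each case to a sufficient-condition theorem for one direction and to an explicit construction for the other. Since the invariants $\beta(G)$ and $\Delta(G)$ are independent of the chosen system (Lemma \ref{lem:bettiIndependence}, and $\Delta$ being a group invariant), I may freely reason in terms of the fixed $1$-free system $(G,T)$, recalling that for a connected graph the Betti number is $1 - |V(T/G)| + |E(T/G)|$.

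\medskip

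\noindent\textbf{The ``if'' direction.} Assume one of the two conditions holds; I must show every $\phi \in \aut^T(G)$ has finitely generated fixed subgroup. First I would dispatch the cases where $\phi$ does not fix a point of $T$ or has $\sgn(\phi) = -1$: these are handled uniformly by Theorem \ref{thm:phi_hyperbolic} and Proposition \ref{prop:negSgnFg}, regardless of $\beta(G)$ or $\Delta(G)$. So it suffices to treat $\sgn(\phi) = 1$. In case (1), $\beta(G) = 0$ means $T/G$ is a tree, so $p(T^\phi)$ is a subtree of a tree and hence a tree; Theorem \ref{thm:contractible} then gives finite generation directly. In case (2), where $\beta(G) = 1$, the hypothesis that $\Delta(G) = \{1,-1\}$ or $\Delta(G)$ is not generated by an integer is exactly the hypothesis of Proposition \ref{prop:BettiNumber1fgFix}, which supplies finite generation. (One should note that $\Delta(G) = \{1,-1\}$ is subsumed by ``not generated by an integer except possibly $-1$,'' so Proposition \ref{prop:BettiNumber1fgFix} covers both sub-cases of (2).) This completes the forward implication.

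\medskip

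\noindent\textbf{The ``only if'' direction.} I argue by contraposition: if neither (1) nor (2) holds, I exhibit a $\phi \in \aut^T(G)$ with $\fix(\phi)$ not finitely generated. Negating the disjunction, the remaining possibilities are: $\beta(G) \geq 2$; or $\beta(G) = 1$ with $\Delta(G)$ generated by an integer $n$ with $|n| \neq 1$ (the excluded values $\Delta(G) = \{1,-1\}$ and ``not generated by an integer'' are precisely what condition (2) ruled in). The case $\beta(G) \geq 2$ is handled immediately by Lemma \ref{lem:Betti2InfiniteGen}, which produces the desired automorphism for any $1$-free system of Betti number at least $2$. For $\beta(G) = 1$ with $\Delta(G) = \langle n \rangle$, $|n| > 1$, I would locate a stable letter $t$ in a presentation with $\Delta(t)$ generating $\Delta(G)$; since $\beta(G) = 1$ there is a unique loop and hence (up to sign/inverse) a single generating stable letter whose modulus generates $\Delta(G)$, so $\Delta(t) \in \{n, 1/n\}$. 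If $\Delta(t) = n \in \mathbb{Z}$ with $n \neq -1$, the integral case of Lemma \ref{lem:integralModInfiniteGen} applies and yields a non-finitely generated fixed subgroup. If instead $\Delta(t) = 1/n \notin \mathbb{Z}$, I replace $t$ by $t^{-1}$, noting $\Delta(t^{-1}) = n \in \mathbb{Z}$, and again invoke the integral case.

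\medskip

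\noindent The main obstacle I anticipate is the bookkeeping in the ``only if'' direction: verifying that the negation of the two stated conditions decomposes cleanly into exactly the hypotheses covered by Lemmas \ref{lem:integralModInfiniteGen} and \ref{lem:Betti2InfiniteGen}, and in particular confirming that when $\beta(G) = 1$ and $\Delta(G)$ is generated by an integer $n$ with $|n| > 1$, one can always arrange a stable letter $t$ with $\Delta(t) \in \mathbb{Z} \setminus \{-1\}$ (possibly after inverting $t$). This relies on $\Delta$ factoring through the forgetful map $p_*$ so that $\Delta(G)$ is generated by the modulus of the single loop-defining stable letter, which pins down $\Delta(t)$ to $n$ or $1/n$ and makes the inversion trick valid.
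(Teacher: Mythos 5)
Your overall strategy is exactly the paper's: Theorem \ref{thm:phi_hyperbolic} and Proposition \ref{prop:negSgnFg} to reduce to $\sgn(\phi)=1$, then Theorem \ref{thm:contractible} for $\beta(G)=0$ and Proposition \ref{prop:BettiNumber1fgFix} for $\beta(G)=1$ in the ``if'' direction, and Lemmas \ref{lem:integralModInfiniteGen} and \ref{lem:Betti2InfiniteGen} in the ``only if'' direction. Your extra bookkeeping (the remark that $\{1,-1\}=\langle -1\rangle$ is admissible in Proposition \ref{prop:BettiNumber1fgFix}, and the $t\mapsto t^{-1}$ trick to force $\Delta(t)\in\mathbb{Z}$) is correct and is detail the paper leaves implicit.

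There is, however, one slip in your case enumeration for the ``only if'' direction. Negating condition (2) when $\beta(G)=1$ gives ``$\Delta(G)$ is generated by an integer \emph{and} $\Delta(G)\neq\{1,-1\}$,'' which includes the case $\Delta(G)=\{1\}$ (generated by the integer $1$), not only generators $n$ with $|n|\neq 1$ as you state. So as written your proof does not produce a non-finitely generated fixed subgroup for unimodular systems of Betti number $1$, e.g.\ $\langle x,t\mid tx^2t^{-1}=x^2\rangle$, which the theorem asserts must have one. The repair is immediate: when $\Delta(G)=\{1\}$ the unique stable letter satisfies $\Delta(t)=1\in\mathbb{Z}\setminus\{-1\}$, so the integral case of Lemma \ref{lem:integralModInfiniteGen} applies verbatim. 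With that case added, your argument is complete and coincides with the paper's.
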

\begin{proof}
    For the ``only if'' direction, this is a direct application of Lemma \ref{lem:integralModInfiniteGen} and Lemma \ref{lem:Betti2InfiniteGen}.

    For the ``if'' direction, we apply Theorem \ref{thm:phi_hyperbolic} and Proposition \ref{prop:negSgnFg} to restrict to the case $\sgn(\phi) = 1$. If $T/G$ is a tree we apply Theorem \ref{thm:contractible}. Otherwise, $T/G$ has Betti number 1 and we may apply Proposition \ref{prop:BettiNumber1fgFix}.
\end{proof}

\begin{theorem}
    \label{thm:bounded}
    Suppose $(G,T)$ is a 1-free, non-elementary GBS system. Then, for $\phi \in \aut^T(G)$:

    \begin{enumerate}
        \item If $\beta(G) = 0$, then $\rk(\fix(\phi)) \leq \max(1, 2|E(T/G)|)$.
        \item If $\beta(G) = 1$ and $\Delta(G) = \{1, -1\}$, then $\rk(\fix(\phi)) \leq 2|V(T/G)| + 1$. 
    \end{enumerate}
    
    Otherwise, there is no bound on $\rk(\fix(\phi))$.
    
\end{theorem}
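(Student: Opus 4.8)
The plan is to read off the two bounded cases from results already proved and then dispatch the ``otherwise'' clause by a short case analysis, feeding each remaining configuration into one of the two non-finite-generation constructions of \Cref{sec:nonFG_examples}.

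Parts (1) and (2) require no new work. For (1), $\beta(G)=0$ says exactly that $T/G$ is a tree, so \Cref{corol:contractible_combined} gives $\rk(\fix(\phi))\leq\max(1,2|E(T/G)|)$. For (2), the hypotheses $\beta(G)=1$ and $\Delta(G)=\{1,-1\}$ are precisely those of the ``moreover'' clause of \Cref{prop:BettiNumber1fgFix}, yielding $\rk(\fix(\phi))\leq 2|V(T/G)|+1$.

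For the ``otherwise'' clause I would first note that it suffices to exhibit, in each remaining configuration, either one $\phi$ with $\fix(\phi)$ non-finitely generated (so $\rk(\fix(\phi))=\infty$) or a family $\phi_N$ with $\rk(\fix(\phi_N))$ unbounded; either obstructs a uniform bound. The remaining configurations are $\beta(G)\geq 2$, and $\beta(G)=1$ with $\Delta(G)\neq\{1,-1\}$. When $\beta(G)\geq 2$, \Cref{lem:Betti2InfiniteGen} directly supplies a $\phi\in\aut^T(G)$ with $\fix(\phi)$ non-finitely generated. When $\beta(G)=1$, I would use that $\Delta$ factors through $p_*$ and $\pi_1(T/G)\cong\mathbb{Z}$, so $\Delta(G)=\langle\Delta(t)\rangle$ is cyclic, generated by the modulus of the stable letter $t$ of the unique loop, and split on whether $\Delta(G)$ is generated by an integer. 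If it is, then $\Delta(t)$ or its inverse lies in $\mathbb{Z}$; reversing the orientation of the loop (which replaces $\Delta(t)$ by $\Delta(t)^{-1}$) lets me assume $\Delta(t)\in\mathbb{Z}$, and $\Delta(G)\neq\{1,-1\}$ forces $\Delta(t)\neq -1$, so the first branch of \Cref{lem:integralModInfiniteGen} produces a non-finitely generated fixed subgroup. If $\Delta(G)$ is not generated by an integer, then neither $\Delta(t)$ nor $\Delta(t)^{-1}$ lies in $\mathbb{Z}$, so the second branch of \Cref{lem:integralModInfiniteGen}, using $\beta(G)=1$, produces fixed subgroups of arbitrarily large finite rank.

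I expect the difficulty here to be organisational rather than mathematical. The points needing care are: recording that the only finite subgroups of $\mathbb{Q}^*$ are $\{1\}$ and $\{1,-1\}$, so that $\Delta(G)=\{1\}$ sits in the integer-generated (hence non-finitely-generated) branch; checking that ``generated by an integer'' is equivalent to $\Delta(t)\in\mathbb{Z}$ or $\Delta(t)^{-1}\in\mathbb{Z}$, which matches the two hypotheses of \Cref{lem:integralModInfiniteGen} exactly; and justifying that reversing the loop's orientation gives a legitimate presentation with stable letter of reciprocal modulus. The two genuinely different sources of ``no bound''---a single infinite-rank example versus an unbounded sequence of finite-rank examples---must each be invoked in the right branch, but beyond this bookkeeping no new argument is needed.
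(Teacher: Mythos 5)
Your proposal is correct and follows essentially the same route as the paper: the two bounds are read off from Corollary \ref{corol:contractible_combined} and the ``moreover'' clause of Proposition \ref{prop:BettiNumber1fgFix}, and the ``otherwise'' clause is handled by the non-finite-generation examples of Lemma \ref{lem:Betti2InfiniteGen} and the two branches of Lemma \ref{lem:integralModInfiniteGen} (the paper merely routes the non-finitely-generated cases through Theorem \ref{thm:characterisingFg}, whose relevant direction is proved by exactly those two lemmas). Your explicit bookkeeping about orientation-reversal of the loop and the case $\Delta(G)=\{1\}$ is a sound, slightly more careful unpacking of the same argument.
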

\begin{proof}
    The given bounds on the rank follow from Corollary \ref{corol:contractible_combined} and Propositions \ref{prop:BettiNumber1fgFix}.

    In each other case, either there are non-finitely generated fixed subgroups by Theorem \ref{thm:characterisingFg}, or there are fixed subgroups for automorphisms $\phi \in \aut^T(G)$ of unbounded rank by Lemma \ref{lem:integralModInfiniteGen}.
\end{proof}

\begin{corollary}\label{cor:baumslagsolitar}
    For fixed $p, q \in \mathbb{Z}$ with $|q| \geq |p|$ and $|p| \neq 1$, consider the group $\bs(p,q) = \langle x, t \mid x^p = tx^qt^{-1} \rangle$.

    \begin{enumerate}
        \item If $p = -q$ then, for all $\phi \in \aut(\bs(p,q))$, $\rk(\fix(\phi)) \leq 3$.
        \item If $p \nmid q$ then, for all $\phi \in \aut(\bs(p,q))$, $\rk(\fix(\phi))$ is finite, but there is no bound on the rank.
        \item Otherwise, there exists $\phi \in \aut(\bs(p,q))$ such that $\rk(\fix(\phi))$ is infinite.
    \end{enumerate}
\end{corollary}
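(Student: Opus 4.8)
The plan is to deduce Corollary \ref{cor:baumslagsolitar} from Theorems \ref{thm:characterisingFg} and \ref{thm:bounded} by exhibiting, for each $\bs(p,q)$ with $|p|\neq 1$, a canonical GBS system and computing its invariants $\beta(G)$ and $\Delta(G)$. First I would fix the obvious GBS system: $\bs(p,q)$ is an HNN extension of $\mathbb{Z}=\langle x\rangle$ with a single stable letter $t$ conjugating $x^q$ to $x^p$, so the quotient graph $T/G$ is a single vertex with a single loop, giving $\beta(G)=1$. Since $|p|\neq 1$ (and $|q|\geq|p|$ forces $|q|\neq 1$ too), both edge labels are $\neq\pm 1$, so this system is $1$-free and non-elementary. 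The modulus is computed from the loop: reading the relation $x^p=tx^qt^{-1}$ gives $\Delta(t)=q/p$, so $\Delta(G)=\langle q/p\rangle\leq\mathbb{Q}^*$.

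The key remaining point, flagged in the paragraph after Theorem \ref{thmintro:bounded}, is that these groups are \emph{algebraically rigid}: by the graphical characterisation of Levitt \cite{levitt2007gbs}, a GBS group given by a single loop with labels of absolute value $\geq 2$ has essentially a unique reduced GBS system, so that $\aut^T(G)=\aut(G)$. This is what lets me upgrade the statements about $\aut^T(G)$ in Theorems \ref{thm:characterisingFg} and \ref{thm:bounded} to statements about all of $\aut(G)$. With rigidity in hand, the three cases of the corollary correspond exactly to the trichotomy in those theorems applied to $\Delta(G)=\langle q/p\rangle$. I would then match cases: when $p=-q$ we have $q/p=-1$, so $\Delta(G)=\{1,-1\}$, placing us in case (2) of Theorem \ref{thm:bounded}, which gives $\rk(\fix(\phi))\leq 2|V(T/G)|+1=2\cdot 1+1=3$. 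When $p\nmid q$, the generator $q/p$ of $\Delta(G)$ is a non-integer rational $\neq -1$, so by case (2) of Theorem \ref{thm:characterisingFg} all fixed subgroups are finitely generated, while Theorem \ref{thm:bounded} (the ``otherwise'' clause, via Lemma \ref{lem:integralModInfiniteGen}) shows there is no uniform bound; this yields case (2). In all remaining cases $p\mid q$ with $q/p$ an integer other than $-1$, so $\Delta(G)$ is generated by an integer $\neq-1$ and Lemma \ref{lem:integralModInfiniteGen} (equivalently the failure of the conditions in Theorem \ref{thm:characterisingFg}) produces an automorphism with infinite rank fixed subgroup, giving case (3).

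The main obstacle is justifying the reduction $\aut^T(G)=\aut(G)$ rigorously, since Theorems \ref{thm:characterisingFg} and \ref{thm:bounded} only control automorphisms preserving the tree $T$, whereas the corollary quantifies over all of $\aut(G)$. I would handle this by invoking algebraic rigidity: a single-loop GBS graph with both labels of absolute value at least $2$ is rigid in the sense of \cite{levitt2007gbs}, so its deformation space is a point and every automorphism preserves the sole reduced tree. One subtlety to check is the degenerate overlap between the hypotheses: the case $p=-q$ with $|p|=|q|$ is genuinely in case (1), and I should confirm it does not also satisfy $p\mid q$ in a way that would conflict (here $q/p=-1$ is an integer, but it is the exempted value $-1$, so Lemma \ref{lem:integralModInfiniteGen} does not apply and there is no contradiction). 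A final bookkeeping step is to note that the hypothesis $|q|\geq|p|$ is a normalisation (swapping $p,q$ inverts the modulus and passes to $\bs(q,p)\cong\bs(p,q)$), ensuring we have covered every non-solvable classical Baumslag--Solitar group exactly once.
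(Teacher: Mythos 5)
Your proof is correct and takes essentially the same route as the paper: both reduce to Theorems \ref{thm:characterisingFg} and \ref{thm:bounded} by computing $\beta=1$ and $\Delta(G)=\langle q/p\rangle$ for the single-loop GBS system and invoking Levitt's result that $\aut^T(G)=\aut(G)$. One small correction: the paper appeals to $\aut^T(G)=\aut(G)$ only in cases (1) and (2) — when $p$ properly divides $q$ the group is in fact \emph{not} algebraically rigid, so your blanket rigidity claim overreaches — but this is harmless for case (3), where exhibiting a single automorphism in $\aut^T(G)\subseteq\aut(G)$ suffices.
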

\begin{proof}
    In the first two cases, $\aut^T(G) = \aut(G)$ \cite[Corollary 1.2]{levitt2007gbs}. Now the result follows directly from Theorem \ref{thm:characterisingFg} and Theorem \ref{thm:bounded}.
\end{proof}

\section{Fixed subgroups of tree GBS groups}

In this section, we restrict to \emph{tree} GBS groups; that is, to those GBS groups $G$ admitting a GBS system $(G, t)$ such that $T/G$ is a tree.
For non-elementary GBS groups, this does not depend on the choice of GBS system.
This restriction allows us to use more algebraic tools, and avoid the restriction to $\aut^T(G)$.
Some of our results work in the more general setting of non-elementary GBS groups with centre, that is where $\Delta(G) = \{1\}$.
In this case the centre is infinite cyclic.

\p{BNS invariants}
We will make use of the \emph{Bieri-Neumann-Strebel (BNS) invariant} of GBS groups, which is a useful tool for determining when kernels of maps to $\mathbb{Z}$ are finitely generated. We recall the definitions and properties we will use, based on \cite{strebel2012notes}.

    Given $G$ a finitely generated group, we write $S(G)$ for the sphere of non-trivial real-valued homomorphisms on $G$ up to positive rescaling, $$S(G) := (\hom(G, \mathbb{R}) \setminus \{0\})/\mathbb{R}_+.$$ Given $H \leq G$, we define the relative sphere as follows, $$S(G,H) := \{ [\chi] \in S(G) \mid \chi(H) = 0\}.$$


    Given a group $G$ we define the \emph{BNS invariant} $\Sigma(G)$ as a subset of $S(G)$. The definition is based on a choice of Cayley graph $\Gamma$, and we write $\Gamma_\chi$ for the induced subgraph containing only the vertices where $[\chi]$ is non-negative. Then we define, $$\Sigma(G) := \{[\chi] \in S(G) \mid \Gamma_\chi \text{ is connected}\}.$$

Note that the choice of $\Gamma$ does not change the BNS invariant, justifying the notation $\Sigma(G)$ \cite[Theorem A2.3]{strebel2012notes}. Our interest in the BNS invariant is due to the following theorem.

\begin{theorem}\cite[Theorem B1]{bieri1987geometric}\label{thm:BNSfg}
        If $G$ is finitely generated and $N \trianglelefteq G$ with $G/N$ abelian, then $N$ is finitely generated if and only if $S(G,N) \subseteq \Sigma(G)$.
\end{theorem}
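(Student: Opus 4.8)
The plan is to prove the two implications separately, leaning on two structural facts about the invariant that are the real engine of the foundational Bieri--Neumann--Strebel result: first, that $\Sigma(G)$ is an \emph{open} subset of $S(G)$; and second, that membership $[\chi]\in\Sigma(G)$ admits a \emph{finite certificate}, namely that connectedness of $\Gamma_\chi$ is witnessed by finitely many edge-paths in the half-space $\chi\geq 0$ joining the endpoints of each generator of $G$. Before either direction I would record the standard reduction. Since $G$ is finitely generated, $Q:=G/N$ is a finitely generated abelian group, say $Q\cong\mathbb{Z}^n\times F$ with $F$ finite, and the homomorphisms vanishing on $N$ are exactly those factoring through the free part $\mathbb{Z}^n$. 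Hence $S(G,N)$ is a closed, and therefore \emph{compact}, subsphere of $S(G)$; when $n=0$ it is empty, $N$ has finite index, and finite generation is automatic, so that case is disposed of immediately and one assumes $n\geq 1$.

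For the ``only if'' direction, assume $N$ is finitely generated and fix $[\chi]\in S(G,N)$. I would choose a finite generating set for $G$ consisting of a finite generating set $Y$ of $N$ together with lifts of generators of $Q$. Because $\chi$ vanishes on $N$, every coset $gN$ lies at a single $\chi$-height, and the $Y$-edges connect that coset within its level; so whenever $\chi(g)\geq 0$ the coset sits inside $\Gamma_\chi$ and is connected there. It then remains to pass between cosets: using the lifts of the $\mathbb{Z}^n$-generators one runs a monotone ``staircase'' path in the region $\chi\geq 0$ from any vertex down to the base coset without ever dipping below height $0$. Combining the intra-coset and inter-coset moves joins an arbitrary vertex of $\Gamma_\chi$ to the identity inside $\Gamma_\chi$, so $\Gamma_\chi$ is connected and $[\chi]\in\Sigma(G)$.

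For the ``if'' direction, which is the substantive one, assume $S(G,N)\subseteq\Sigma(G)$. The starting observation is that $N$ is always finitely generated as a \emph{normal} subgroup here (it contains $[G,G]$, which is normally generated by the finitely many commutators of generators, together with finitely many lifts of generators of the abelian $N/[G,G]$); the entire difficulty is upgrading this to finite generation as a plain subgroup, where one must control the infinitely many conjugates $gn g^{-1}$. This is exactly what the hypothesis buys: for each $[\chi]\in S(G,N)$, connectedness of $\Gamma_\chi$ gives a uniform bound on how far one must travel in the $\chi\geq 0$ direction to realise such conjugates, so that every conjugate reduces, modulo the positive cone, to a fixed finite list. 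By \emph{openness} of $\Sigma(G)$ together with \emph{compactness} of $S(G,N)$, finitely many basic neighbourhoods inside $\Sigma(G)$ cover the subsphere, and pooling their certificates yields a single finite subset $N_0\subseteq N$ whose chosen moves work in \emph{every} direction simultaneously; this collapses the a priori infinite conjugate generating set to a finite one, giving $N=\langle N_0\rangle$. I expect this final synthesis --- converting a uniform finite certificate over the compact subsphere into honest finite generation --- to be the main obstacle, and it rests squarely on the openness of $\Sigma(G)$, which is itself the key preparatory lemma and is proved by a perturbation argument showing that the finitely many path-witnesses for $\Gamma_\chi$ survive under small changes of $\chi$.
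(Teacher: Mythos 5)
First, a point of comparison: the paper does not prove this statement at all --- it is imported verbatim from Bieri--Neumann--Strebel \cite{bieri1987geometric} as Theorem B1, so there is no in-paper argument to measure you against, and your attempt has to stand on its own. Your ``only if'' direction is essentially the standard easy half and is fine: taking a generating set of $G$ consisting of generators of $N$ together with lifts of generators of $Q=G/N$, the cosets of $N$ are $\chi$-level sets, and connectivity of $\Gamma_\chi$ reduces to connectivity of the half-space $\{\bar\chi\ge 0\}$ in the Cayley graph of the finitely generated abelian group $Q$, which is elementary.

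The genuine gap is in the ``if'' direction, which is the substance of the theorem. You correctly note that $N$ is finitely generated as a normal subgroup and that the task is to control the conjugates $gng^{-1}$, and you correctly name openness of $\Sigma(G)$ and compactness of $S(G,N)$ as ingredients. But the pivotal assertion --- that connectedness of $\Gamma_\chi$ ``gives a uniform bound on how far one must travel in the $\chi\geq 0$ direction to realise such conjugates'' --- is precisely the content of the theorem and is nowhere justified. Connectedness of $\Gamma_\chi$ is a statement about paths in a Cayley graph; converting it into redundancy of conjugates $gng^{-1}$ with $\chi(g)$ large requires a real mechanism. In the rank-one case $G/N\cong\mathbb{Z}$ the standard route is to show that $[\chi]\in\Sigma(G)$ forces $G$ to be an ascending HNN extension over a finitely generated $B\le N$ with $N=\bigcup_{k\ge 0}t^{-k}Bt^{k}$, and that $[-\chi]\in\Sigma(G)$ then forces this ascending union to stabilise; the general abelian quotient is handled by induction on the torsion-free rank of $G/N$, and it is there (in verifying the inductive hypothesis $S(G',N)\subseteq\Sigma(G')$ for $G'=\ker\chi$) that openness and compactness actually earn their keep. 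Your final ``pooling of certificates'' describes a hoped-for conclusion rather than an argument: no finite set $N_0$ is exhibited and no proof is given that an arbitrary conjugate lies in $\langle N_0\rangle$. As written, the hard implication is not proved; for the purposes of this paper the citation is the correct move, and note the paper only ever invokes the theorem when $G/N\cong\mathbb{Z}$, where $S(G,N)=\{[\psi],[-\psi]\}$.
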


We will be interested in specialising to the case where the quotient is $\mathbb{Z}$. In this case, writing $\psi$ for the quotient map, we note that $S(G, N) = \{[\psi], [-\psi]\}$.

\p{Actions on trees}
Suppose $G$ acts on a tree.
The following gives sufficient conditions for the containment $\chi: G \rightarrow \mathbb{R} \in \Sigma(G)$. 

\begin{lemma}\cite[Corollary 2.2]{cashen2016mapping}\label{lem:BNSCL}
    Let $G$ act on a tree with finite quotient such that every vertex stabiliser is finitely generated. Then let $\chi: G \rightarrow \mathbb{R}$ be a homomorphism such that, \begin{enumerate}
        \item $\chi$ is non-trivial on each edge stabiliser,
        \item For each vertex $v$, $[\chi|_{G_v}] \in \Sigma(G_v)$,
    \end{enumerate} then $[\chi] \in \Sigma(G)$.
\end{lemma}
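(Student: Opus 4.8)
The plan is to verify the defining condition of $\Sigma(G)$ directly: fixing a Cayley graph $\Gamma$, I would show that for every $g\in G$ with $\chi(g)\ge 0$ there is a path in $\Gamma_\chi$ from $1$ to $g$, so that $\Gamma_\chi$ is connected. First I would set up a generating set adapted to the tree. By Proposition \ref{prop:bassFg}(1) the group $G$ is finitely generated, and I would take $S$ to consist of finite generating sets for each of the finitely many orbit representatives of vertex groups $G_v$, together with one stable letter $t_e$ for each non-tree edge orbit of $Y:=T/G$. Relative to this $S$, every element has a graph-of-groups normal form that reads off the geodesic $[v_0,gv_0]$ in $T$ as a word alternating vertex-group elements and stable letters, the tree edges being absorbed into the vertex groups via the edge-group identifications.

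The two hypotheses supply the two moves needed to build the path. Moving \emph{within} a vertex group is handled by hypothesis (2): since $[\chi|_{G_v}]\in\Sigma(G_v)$, any two elements of $G_v$ with non-negative $\chi$-value are joined by a path in the Cayley graph of $G_v$ (hence in $\Gamma$) on which $\chi$ never goes negative. Moreover, because the edge stabiliser sitting inside $G_v$ already has $\chi\neq 0$ by hypothesis (1), there is $c\in G_v$ with $\chi(c)>0$, and applying $\Sigma(G_v)$ again lets us reach arbitrarily large powers $c^N$ along a $\chi\ge 0$ path inside $G_v$. Crossing an \emph{edge} changes $\chi$ by the bounded amount $\chi(t_e)$, and the crucial point is that conjugating such an edge-stabiliser element by $t_e$ lands in the edge stabiliser on the far side with the \emph{same} positive value (conjugation preserves $\chi$, the target being abelian). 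So, whenever the running partial product would dip below $0$ across an edge, I would first detour inside the current vertex group to multiply by a high power $c^N$, pushing $\chi$ well above the uniform bound $M=\max_{s\in S}|\chi(s)|$, then cross; the crossing shifts $\chi$ by at most $M$, so it stays non-negative, and the inserted power reappears as an edge-group element on the other side.

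The main obstacle, and the technical heart, is the \emph{global bookkeeping}: the powers $c^N$ inserted to ferry the path safely across edges must be accounted for so that the path ends exactly at $g$ and not at $g$ times some leftover edge-group element. The clean way to organise this is to first prove the weaker statement that $1$ and $g$ are joined by a path on which $\chi$ stays bounded below by a uniform constant $-D$ depending only on $Y$ and the finite correction data, not on $g$; and then to upgrade ``$\chi\ge -D$'' to ``$\chi\ge 0$''. The upgrade uses $G$-equivariance of $\chi$ together with the already-established fact that one can climb to arbitrarily large $\chi$ within each vertex group while staying non-negative: one lifts the finitely many sub-zero excursions of the path onto a common positive plateau and reconnects them there. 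Establishing the constant $D$ and executing this plateau surgery is where the real work lies; the tree combinatorics above are elementary by comparison.

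Finally, I would keep in reserve an inductive alternative that trades this bookkeeping for a structural induction on the number of edges of $Y$, reducing to the single-edge building blocks: an amalgam $A*_C B$, where $\chi|_C\neq 0$ and $[\chi|_A],[\chi|_B]\in\Sigma$ give connectivity by gluing vertex-group paths across the shared edge group $C$, and an HNN extension $A*_C$, where the stable letter plays the role of the edge element exactly as above. The difficulty in this route is verifying that deleting an edge leaves a subgroup to which the inductive hypothesis genuinely applies — precisely the graph-of-groups analysis carried out in the cited work of Cashen and Levitt, which one could simply invoke.
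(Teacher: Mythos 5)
The first thing to say is that the paper does not prove this statement at all: it is quoted verbatim from Cashen--Levitt and used as a black box (its only appearance is in the proof of Lemma \ref{lem:negCentreFixFg}). So there is no in-paper argument to compare against, and the relevant benchmark is whether your sketch would actually yield a proof. Your strategy is the right one and is essentially the classical Bieri--Strebel ``boosting'' argument that underlies the cited result: hypothesis (2) gives free travel inside a vertex group (for $g,h\in G_v$ with $\chi(g)\le\chi(h)$, translate a non-negative path from $1$ to $g^{-1}h$ by $g$ to get a path from $g$ to $h$ staying above $\chi(g)$, with no loss of a constant), and hypothesis (1) supplies elements of each \emph{edge} group of arbitrarily large $\chi$-value, which can be conjugated across the corresponding edge without changing their $\chi$-value. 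You have also correctly identified why the boosting element must live in the edge group rather than merely in $G_v$: only then does $t_e^{-1}c^Nt_e$ make sense as an element of the far-side vertex group, so that the inserted power can be cancelled later.

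That said, as written this is a plan with its core deferred, and the step you flag as ``where the real work lies'' is both unproven and, in fact, the wrong place to put the work. The passage from ``$1$ and $g$ are joined by a path with $\chi\ge -D$'' to ``$\Gamma_\chi$ is connected'' is a genuine lemma requiring its own argument (the naive translation-by-$c$ trick leaves residual dips near the endpoints of each valley), and your ``plateau surgery'' is only asserted. The standard proof avoids this entirely: writing $g=a_0t_1a_1\cdots t_na_n$ in normal form, one inserts pairs $c_i^{N_i}c_i^{-N_i}$ with $c_i$ in the edge group of the edge about to be crossed and the $N_i$ chosen large in advance, and then concatenates paths of the form (travel in $G_{v_0}$ from $1$ to $a_0c_1^{N_1}$) $\cdot$ (cross $t_1$) $\cdot$ (travel in $a_0t_1G_{v_1}$ from $d_1^{N_1}$ to $a_1c_2^{N_2}$, where $d_1=t_1^{-1}c_1t_1$) $\cdots$; every vertex-group leg joins two points of non-negative $\chi$-value and so stays non-negative by hypothesis (2), and no uniform constant $D$ or upgrade step is ever needed. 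So the approach is sound but the proposal is not yet a proof, and the place where you expect the difficulty to lie can be engineered away. Your fallback of inducting on edges and invoking Cashen--Levitt for the building blocks is, of course, exactly what the paper itself does, but it is a citation rather than a proof.
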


\p{GBS groups}
We now apply the above to GBS groups.

\begin{lemma}\label{lem:gphi}
    Let $G$ be a non-elementary GBS group with centre. Then $G_\phi := \{g \in G \mid g^{-1}\phi(g) \in Z(G) \}$ is finitely generated for all $\phi \in \aut(G)$.
\end{lemma}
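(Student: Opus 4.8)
The plan is to pass to the quotient $\bar G := G/Z(G)$. By hypothesis $G$ is a non-elementary GBS group with centre, so $\Delta(G) = \{1\}$ and $Z(G) \cong \mathbb{Z}$. Since the centre is characteristic, any $\phi \in \aut(G)$ descends to an automorphism $\bar\phi \in \aut(\bar G)$ compatible with the quotient map $\pi : G \to \bar G$, in the sense that $\pi\phi = \bar\phi\pi$. The key observation is that $G_\phi$ is exactly the full preimage of a fixed subgroup downstairs: unwinding the definition, $g \in G_\phi$ iff $\pi(g^{-1}\phi(g)) = 1$ iff $\bar\phi(\pi(g)) = \pi(g)$, so $G_\phi = \pi^{-1}(\fix(\bar\phi))$. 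In particular $G_\phi$ is a subgroup, it contains $Z(G)$ (as $z^{-1}\phi(z) \in Z(G)$ for central $z$), and $\pi$ maps it onto $\fix(\bar\phi)$. This yields a central extension
\[
1 \to Z(G) \to G_\phi \xrightarrow{\pi} \fix(\bar\phi) \to 1
\]
with $Z(G) \cong \mathbb{Z}$.

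Next I would identify $\bar G$ as a virtually free group. The tree $T$ on which $G$ acts carries an induced action of $\bar G$ with quotient $T/\bar G = T/G$, and since $Z(G)$ sits as a finite-index subgroup of each infinite cyclic vertex stabiliser $G_v$ (this is where $\Delta(G) = \{1\}$ is used), the stabilisers of the $\bar G$-action are finite. Thus $\bar G$ is a finitely generated group acting on a tree with finite stabilisers and finite quotient, hence virtually free, and in particular hyperbolic. Invoking the finite generation of fixed subgroups of automorphisms of hyperbolic groups \cite{Hyperbolic}, we conclude that $\fix(\bar\phi)$ is finitely generated.

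Finally, finite generation passes up the central extension: an extension of a finitely generated group $\fix(\bar\phi)$ by the finitely generated group $Z(G) \cong \mathbb{Z}$ is itself finitely generated, since $G_\phi$ is generated by a generator of $Z(G)$ together with lifts of a finite generating set of $\fix(\bar\phi)$. Hence $G_\phi$ is finitely generated, as required.

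The main obstacle is the passage to $\bar G$ and the appeal to fixed subgroups of hyperbolic groups: one must check carefully that the induced $\bar G$-action on $T$ really does have finite stabilisers (relying on $Z(G)$ being finite-index in each $G_v$) and that $\bar\phi$ is a genuine automorphism of $\bar G$ (which follows since $Z(G)$ is characteristic). The remaining work is routine diagram-chasing with the central extension. Note that this approach deliberately avoids the tree $T$ being $\phi$-invariant, so it applies to \emph{all} of $\aut(G)$ and not merely $\aut^T(G)$, which is exactly what is needed; the BNS machinery recalled above is reserved for the subsequent step of deducing finite generation of $\fix(\phi)$ itself from finite generation of $G_\phi$.
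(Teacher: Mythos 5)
Your proposal is correct and follows essentially the same route as the paper's own proof: quotient by the characteristic centre, observe that $G_\phi$ is the full preimage of $\fix(\bar\phi)$ in the virtually free (hence hyperbolic) group $G/Z(G)$, invoke finite generation of fixed subgroups in hyperbolic groups, and pass finite generation up the extension by $Z(G)\cong\mathbb{Z}$. Your added care in checking that the induced action of $G/Z(G)$ on $T$ has finite stabilisers (via $Z(G)$ having finite index in each vertex stabiliser) is a detail the paper leaves implicit, but the argument is the same.
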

\begin{proof}
    First notice, since $Z(G)$ is characteristic, $\phi$ induces an automorphism $\bar{\phi}$ on $G/Z(G)$, which is virtually free and in particular hyperbolic since it acts on a tree with finite vertex groups. It follows that $\fix(\bar{\phi})$ is finitely generated \cite{Hyperbolic}. 

    Now we notice that, by definition, $G_\phi$ is the full pre-image of $\fix(\bar{\phi})$. It follows that we have a short exact sequence, $$1 \rightarrow K \rightarrow G_\phi \rightarrow \fix(\bar{\phi}) \rightarrow 1,$$ coming from the restriction of $G \rightarrow G/Z(G)$. Finally, since $K \leq Z(G) \cong \mathbb{Z}$, $K$ is finitely generated, so $G_\phi$ is finitely generated.
\end{proof}

\begin{lemma}\label{lem:negCentreFixFg}
    Let $G$ be a non-elementary GBS group with centre. Then $\fix(\phi)$ is finitely generated for all $\phi \in \aut(G)$ acting non-trivially on $Z(G)$.
\end{lemma}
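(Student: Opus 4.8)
The plan is to realise $\fix(\phi)$ as the kernel of a homomorphism to $\mathbb{Z}$ from a finitely generated group, and then verify finite generation of this kernel using the BNS criterion (Theorem \ref{thm:BNSfg}) together with the tree-theoretic sufficient condition of Cashen--Levitt (Lemma \ref{lem:BNSCL}).

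Since $Z(G) \cong \mathbb{Z}$ is characteristic, $\phi$ restricts to an automorphism of $Z(G)$, and acting non-trivially forces $\phi(z) = z^{-1}$ for a generator $z$ of $Z(G)$. First I would note that $\fix(\phi) \leq G_\phi$, which is finitely generated by Lemma \ref{lem:gphi}, and define $\delta \colon G_\phi \to Z(G)$ by $\delta(g) = g^{-1}\phi(g)$. A short computation, using that $\delta(g)$ is central so commutes past $h^{-1}$, shows $\delta(gh) = \delta(g)\delta(h)$, i.e.\ $\delta$ is a homomorphism, with $\fix(\phi) = \ker\delta$. The key point is that $\delta(z) = z^{-1}\phi(z) = z^{-2} \neq 1$, so $\delta$ is non-trivial; identifying $Z(G) \cong \mathbb{Z}$ we may regard $\delta$ as a surjection $G_\phi \twoheadrightarrow \mathbb{Z}$ with kernel $\fix(\phi)$, whence $S(G_\phi, \fix(\phi)) = \{[\delta], [-\delta]\}$. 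By Theorem \ref{thm:BNSfg} it then suffices to prove $[\delta], [-\delta] \in \Sigma(G_\phi)$. This is exactly where the hypothesis that $\phi$ acts non-trivially on $Z(G)$ is used, and it explains why it cannot be dropped: it guarantees $\delta$ does not vanish on $z$, hence (below) on any stabiliser.

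To apply Lemma \ref{lem:BNSCL} I need a suitable tree action of $G_\phi$. I would build one by pulling back along $G_\phi \twoheadrightarrow \fix(\overline{\phi})$ (as in Lemma \ref{lem:gphi}, with $\overline{G} = G/Z(G)$): the group $\fix(\overline{\phi})$ is a finitely generated subgroup of the virtually free group $\overline{G}$, hence is itself finitely generated virtually free and so acts cocompactly on a tree $\overline{T}$ with finite stabilisers. Letting $G_\phi$ act on $\overline{T}$ through this quotient gives an action with finite quotient whose edge and vertex stabilisers are the preimages of finite groups, hence are extensions of the finite stabilisers by $\ker(G_\phi \to \fix(\overline\phi)) = Z(G)$; in particular each stabiliser is infinite virtually cyclic (so finitely generated) and contains $z$. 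Now I verify the two conditions of Lemma \ref{lem:BNSCL} for $\chi = \pm\delta$: condition (1) holds because each edge stabiliser contains $z$ and $\delta(z) = z^{-2}\neq 1$, so $\delta$ is non-trivial there; for condition (2), each vertex stabiliser $G_v$ is infinite virtually cyclic, and the existence of the non-trivial character $\delta|_{G_v}$ forces $\hom(G_v,\mathbb{R})$ to be one-dimensional, so $S(G_v) = \{[\delta|_{G_v}],[-\delta|_{G_v}]\}$; since any non-trivial character of an infinite virtually cyclic group has finite, hence finitely generated, kernel, Theorem \ref{thm:BNSfg} applied to $G_v$ gives $\Sigma(G_v) = S(G_v)$, so $[\pm\delta|_{G_v}] \in \Sigma(G_v)$. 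Hence $[\delta],[-\delta] \in \Sigma(G_\phi)$, and $\fix(\phi) = \ker\delta$ is finitely generated.

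The main obstacle is arranging an appropriate cocompact tree action for $G_\phi$ on which Lemma \ref{lem:BNSCL} can be run, and checking its edge/vertex stabiliser hypotheses; the pull-back construction above resolves this by making every stabiliser infinite virtually cyclic and containing the central $z$, which simultaneously supplies finite generation of stabilisers, the non-triviality of $\delta$ on edge groups, and the computation of $\Sigma$ on vertex groups. (Alternatively, one can run the same argument using the restriction of the GBS tree $T$ itself to $G_\phi$, passing to the minimal invariant subtree, after observing that the central generator $z$ fixes $T$ pointwise since its fixed subtree is $G$-invariant and $T$ is not a line.)
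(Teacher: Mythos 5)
Your proposal is correct and follows essentially the same route as the paper: the homomorphism $g \mapsto g^{-1}\phi(g)$ on $G_\phi$, the observation that $\psi(z)=z^{-2}\neq 1$, and the combination of Theorem \ref{thm:BNSfg} with Lemma \ref{lem:BNSCL}. The only divergence is the choice of tree --- you pull back a cocompact tree from the virtually free quotient $G/Z(G)$, getting virtually cyclic stabilisers, whereas the paper restricts the GBS tree itself to $G_\phi$ and passes to a minimal invariant subtree so that all stabilisers are $\mathbb{Z}$ (which makes the verification $\Sigma(G_v)=S(G_v)$ immediate); both work, and you note the paper's variant yourself in your closing parenthetical.
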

\begin{proof}
    We consider the map $\psi: G_\phi \rightarrow \mathbb{Z}$ given by $g \mapsto g^{-1}\phi(g) \in Z(G)$ (we arbitrarily choose $z$ cyclically generating $Z(G)$ and identify the centre with $\mathbb{Z}$ via $z \mapsto 1$). We note that this is a homomorphism since if $g^{-1}\phi(g) = z^i$ and $h^{-1}\phi(h) = z^j$, then $$(gh)^{-1}\phi(gh)= h^{-1}g^{-1}\phi(g)\phi(h) = h^{-1}x^i\phi(h) = x^{i+j}.$$ Clearly $\ker(\psi) = \fix(\phi)$. 

    We choose $T$ an arbitrary GBS tree for $G$, and pass to an invariant subtree $T'$ where $G_\phi$ acts minimally \cite[Corollary 7.3, Proposition 7.5]{bass1993covering}. By Proposition \ref{prop:bassFg}, $T'/G_\phi$ is finite. All edge and vertex groups are isomorphic to $\mathbb{Z}$, since $Z(G)$ is a subgroup of $G_\phi$ and is contained in every edge and vertex stabiliser in $T$.

    Since $\phi$ acts non-trivially on the centre, it follows that $\psi(z) = z^{-2}$, so $\psi$ is non-trivial on the edge groups. Furthermore, we note that all of the vertex groups are isomoprhic to $\mathbb{Z}$, and that $\Sigma(\mathbb{Z}) = S(\mathbb{Z})$  follows immediately from the definition of BNS invariants given above. By Lemma \ref{lem:BNSCL}, $[\psi] \in \Sigma(G)$. Since the choice of isomorphism $Z(G) \rightarrow \mathbb{Z}$ was arbitrary, $[-\psi] \in \Sigma(G)$ by the same argument. It follows by Theorem \ref{thm:BNSfg} that $\ker(\psi) = \fix(\phi)$ is finitely generated. 
\end{proof}

The main result of this section is as follows.
Recall that a non-elementary GBS group has Betti number $0$ if for some (and hence any) GBS system $(G, T)$, the quotient $T/G$ is a tree. It follows that $\ab(G)$ is cyclic.

\begin{theorem}\label{thm:treeFg}
    Let $G$ be a non-elementary GBS group with Betti number $0$. Then $\fix(\phi)$ is finitely generated for all $\phi \in \aut(G)$.
\end{theorem}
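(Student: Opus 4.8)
The plan is to leverage the two technical lemmas already proved for GBS groups with centre: that $G_\phi$ is always finitely generated (Lemma \ref{lem:gphi}), and that $\fix(\phi)$ is finitely generated whenever $\phi$ acts non-trivially on $Z(G)$ (Lemma \ref{lem:negCentreFixFg}). The first observation is that $\beta(G)=0$ means the quotient graph is a tree, hence has no loops, so $\Delta(G)=\{1\}$ and $G$ has infinite cyclic centre $Z(G)=\langle z\rangle$ (contained in every vertex and edge stabiliser). Since $\aut(\mathbb{Z})=\{\pm 1\}$, every $\phi\in\aut(G)$ either inverts $z$ or fixes it. In the inverting case Lemma \ref{lem:negCentreFixFg} applies directly, so the whole problem reduces to the case $\phi(z)=z$. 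Note that no restriction to $\aut^T(G)$ is needed, as both lemmas are stated for arbitrary automorphisms.

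In the remaining case I would prove the stronger statement that $\fix(\phi)=G_\phi$; finite generation then follows immediately from Lemma \ref{lem:gphi}. Since $\fix(\phi)\subseteq G_\phi$ always holds, the content is the reverse inclusion: I must show that every $g$ with $g^{-1}\phi(g)\in Z(G)$ actually satisfies $\phi(g)=g$. Writing $\phi(g)=gz^{k}$ for the relevant $k\in\mathbb{Z}$, the goal is to force $k=0$.

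Ruling out a non-zero ``defect'' $k$ is the key step, and the main obstacle, since in general a non-trivial homomorphism $G_\phi\to\mathbb{Z}$ could have non-finitely-generated kernel. I would resolve it using the abelianisation. Because $G$ is a tree of infinite cyclic groups over a \emph{tree}, the relation matrix has rank $|E|=|V|-1$, so $\ab(G)$ has free rank exactly $1$ and $\hom(G,\mathbb{Z})\cong\mathbb{Z}$ is generated by a single character $\chi_0$. Moreover $\bar G:=G/Z(G)$ is the fundamental group of a tree of \emph{finite} cyclic groups, hence has finite abelianisation, so no non-trivial homomorphism $G\to\mathbb{Z}$ can kill $Z(G)$; in particular $\chi_0(z)\neq 0$. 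Now $\phi$ acts on $\hom(G,\mathbb{Z})$ by precomposition, so $\chi_0\circ\phi=\pm\chi_0$, and evaluating at $z$ (using $\phi(z)=z$ and $\chi_0(z)\neq 0$) pins the sign to $+1$. Applying $\chi_0$ to $\phi(g)=gz^{k}$ gives $\chi_0(g)=\chi_0(g)+k\,\chi_0(z)$, whence $k\,\chi_0(z)=0$ and therefore $k=0$. This yields $\fix(\phi)=G_\phi$ and completes the proof.

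I expect the only subtle points to be the two abelianisation facts (free rank $1$ for $G$, finiteness for $\bar G$), both routine once the graph-of-groups description is in hand; the remainder is formal. This approach confines all use of BNS invariants to Lemma \ref{lem:negCentreFixFg}, with the centre-fixing case handled purely by the abelianisation argument together with Lemma \ref{lem:gphi}.
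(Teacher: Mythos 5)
Your proposal is correct and follows essentially the same route as the paper: split on whether $\phi$ inverts the centre (handled by Lemma \ref{lem:negCentreFixFg}), and otherwise show $\fix(\phi)=G_\phi$ by an abelianisation argument so that Lemma \ref{lem:gphi} applies. Your execution of the last step via $\hom(G,\mathbb{Z})\cong\mathbb{Z}$ and the finiteness of $\ab(G/Z(G))$ is in fact slightly more careful than the paper's appeal to ``$\ab(G)$ is cyclic'' (which can fail because of torsion, though the free rank is indeed $1$), so no changes are needed.
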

\begin{proof}
    If $\phi$ acts non-trivially on the centre, then we apply Lemma \ref{lem:negCentreFixFg} and we are done. 
    
    So suppose $\phi$ acts trivially on the centre. Notice that the induced automorphisms $\bar{\phi}: \ab(G) \rightarrow \ab(G)$ is trivial, since $\ab(G)$ is cyclic and the centre of course appears non-trivially. As before we consider $G_\phi$, which is finitely generated by Lemma \ref{lem:gphi}. Note that if, for arbitrary $g \in G$ and $i \in \mathbb{Z}$, we have $g^{-1}\phi(g) = z^i$, then in the abelianisation $\bar{\phi}(\bar{g}) = \bar{g}\bar{z}^i$. However, $\bar{\phi}$ was trivial, so it must be that $i = 0$. Hence, in fact $G_\phi = \fix(\phi)$, and the proof is complete.
\end{proof}

\bibliographystyle{amsalpha}
\bibliography{bibliography}

\end{document}